\providecommand{\U}[1]{\protect\rule{.1in}{.1in}}
\newtheorem{theorem}{Theorem}
\newtheorem{conjecture}[theorem]{Conjecture}
\newtheorem{corollary}[theorem]{Corollary}
\newtheorem{example}[theorem]{Example}
\newtheorem{lemma}[theorem]{Lemma}
\newtheorem{remark}[theorem]{Remark}
\newenvironment{proof}[1][Proof]{\noindent\textbf{#1.} }{\ \rule{0.5em}{0.5em}}
\newenvironment{definition}[1][Definition]{\noindent\textbf{#1} }{}
\newenvironment{acknowledgement}[1][Acknowledgement]{\noindent\textbf{#1} }{}
\begin{document}

\title{Subsingular vectors in Verma modules, and tensor product of weight modules
over the twisted Heisenberg-Virasoro algebra and $W(2,2)$ algebra}
\author{Gordan Radobolja}
\maketitle

\begin{abstract}
We show that subsingular vectors may exist in Verma modules over $W(2,2)$, and
present the subquotient structure of these modules. We prove conditions for
irreducibility of the tensor product of intermediate series module with a
highest weight module. Relation to intertwining operators over vertex operator
algebra associated to $W(2,2)$ is discussed. Also, we study the tensor product
of intermediate series and a highest weight module over the twisted
Heisenberg-Virasoro algebra, and present series of irreducible modules with
infinite-dimensional weight spaces.

\end{abstract}

\section{Introduction}

Lie algebra $W(2,2)$ was first introduced by W.\ Zhang and C.\ Dong in
\cite{Zhang-Dong} as a part of classification of simple vertex operator
algebras generated by two weight two vectors. It is an extension of the well
known Virasoro algebra $\operatorname*{Vir}$, and its representation theory is
somewhat similar to that of $\operatorname*{Vir}$. Criterion for
irreducibility of Verma modules over $W(2,2)$ was given in \cite{Zhang-Dong}.
The structure of those modules was discussed by W.\ Jiang and Y.\ Pei in
\cite{Jiang-Pei}. However, the authors overlooked an important fact - that a
submodule generated by a singular vector is not necessarily isomorphic to some
Verma module; and therefore missed an interesting possibility - a subsingular
vector in Verma module. In Section \ref{Verma} we present new results on
structure of Verma modules (Theorem \ref{glavni}), and formulas for
subsingular vector. Necessary condition for the existence of a subsingular
vector is given, and many examples supporting a conjecture that this condition
is sufficient are shown.

It was proved by D.\ Liu and L.\ Zhu in \cite{Liu-Zhu} that every irreducible
weight module over $W(2,2)$ with finite-dimensional weight subspaces is either
the highest or lowest weight module, or a module belonging to an intermediate
series. Modules with infinite-dimensional weight subspaces over the affine
Kac-Moody algebra (see \cite{Chari-Pressley} and
\cite{Adamovic1,Adamovic2,Adamovic3}) and over the Virasoro algebra
(\cite{Zhang} and \cite{Radobolja2}) have been studied recently, motivated by
their connection with theory of vertex operator algebras (VOAs) and fusion
rules in conformal field theory. Based on modules studied in these papers, we
consider the tensor product of irreducible module from intermediate series,
and irreducible highest weight module. We show in Section \ref{VOA} how these
modules can be obtained from intertwining operators for modules over VOA
associated to $W(2,2)$. In Section \ref{irr tezor} we classify irreducible
tensor products (Theorem \ref{main}). The existence of, and a formula for a
subsingular vector is crucial in this analysis since generic singular vectors
are of no use (Theorem \ref{skroz red}). We show that a tensor product module
contains an irreducible submodule with infinite-dimensional weight subspaces
if and only if a subsingular vector exists in the corresponding Verma module
(Corollary \ref{irr sub}). Different highest weight modules occur as
subquotients in reducible tensor product modules. Some of these subquotients
are related to intertwining operators.

In section \ref{HV} we discuss irreducibility of tensor product module over
the twisted Heisenberg-Virasoro algebra at level zero. This algebra was
studied by Y.\ Billig in \cite{Billig}, as it appears in the construction of
modules for the toroidal Lie algebras (see \cite{Billig2}). We present a rich
series of irreducible tensor products.

\section{Lie algebra $W(2,2)$}

$W(2,2)$ is a complex Lie algebra with basis $\{W_{n},L_{n},C,:n\in
\mathbb{Z\}}$, and a Lie bracket
\begin{gather}
\left[  L_{n},L_{m}\right]  =\left(  n-m\right)  L_{n+m}+\delta_{n,-m}%
\frac{n^{3}-n}{12}C,\label{1}\\
\left[  L_{n},W_{m}\right]  =\left(  n-m\right)  W_{n+m}+\delta_{n,-m}%
\frac{n^{3}-n}{12}C,\nonumber\\
\left[  W_{n},W_{m}\right]  =\left[  \mathcal{L},C\right]  =0.\nonumber
\end{gather}
In this paper we write $\mathcal{L}=W(2,2)$, a notation used by Liu and Zhu in
\cite{Liu-Zhu}. Obviously, $\{L_{n},C:n\in\mathbb{Z\}}$ spans a copy of the
Virasoro algebra. Triangular decomposition is given by $\mathcal{L}%
=\mathcal{L}_{-}\oplus\mathcal{L}_{0}\oplus\mathcal{L}_{+}$ where
\begin{align*}
\mathcal{L}_{+} &  =\bigoplus\limits_{n>0}(\mathbb{C}L_{n}+\mathbb{C}W_{n}),\\
\mathcal{L}_{-} &  =\bigoplus\limits_{n>0}(\mathbb{C}L_{-n}+\mathbb{C}%
W_{-n}),\\
\mathcal{L}_{0} &  =\mathbb{C}L_{0}+\mathbb{C}W_{0}+\mathbb{C}C.
\end{align*}
However, $W_{0}$ does not act semisimply on the rest of $\mathcal{L}$. Algebra
$\mathcal{L}$ is $\mathbb{Z}$-graded by eigenvalues of $L_{0}$, namely
$\mathcal{L}_{n}=\mathbb{C}L_{-n}+\mathbb{C}W_{-n}+\delta_{n,0}\mathbb{C}C$.

Let $U(\mathcal{L})$ denote a universal enveloping algebra and $\mathcal{I}$ a
left ideal generated by $\{L_{n},W_{n},C-c\mathbf{1},L_{0}-h\mathbf{1}%
,W_{0}-h_{W}\mathbf{1}:n\in\mathbb{N}\}$ for fixed $c,h,h_{W}\in\mathbb{C}$.
Then $V(c,h,h_{W}):=U(\mathcal{L})/\mathcal{I}$ is called the Verma module
with central charge $c$ and highest weight $(h,h_{W})$ or simply with highest
weight $(c,h,h_{W})$. It is a free $U(\mathcal{L})$-module generated by the
highest weight vector $v:=\mathbf{1}+\mathcal{I}$, and a standard
Poincare-Birkhoff-Witt (PBW) basis
\[
\{W_{-m_{s}}\cdots W_{-m_{1}}L_{-n_{t}}\cdots L_{-n_{1}}v:m_{s}\geq\cdots\geq
m_{1}\geq1,n_{t}\geq\cdots\geq n_{1}\geq1\}.
\]
\textbf{Throughout the rest of this paper }$v$\textbf{\ denotes the highest
weight vector and we assume that a monomial }%
\[
W_{-m_{s}}\cdots W_{-m_{1}}L_{-n_{t}}\cdots L_{-n_{1}}v
\]
\textbf{is given in PBW\ basis, unless otherwise noted.} Central charge and
weights should be clear from the context, as well as weather $v$ comes from
the Verma module or some quotient.

Module $V(c,h,h_{W})$ admits a natural gradation by $L_{0}$-eigenspaces i.e.,
weight subspaces: $V(c,h,h_{W})=\bigoplus_{k\geq0}V(c,h,h_{W})_{h+k}$. PBW
vectors such that $\sum m_{i}+\sum n_{j}=k$ form a basis for $V(c,h,h_{W}%
)_{h+k}$. Module $V(c,h,h_{W})$ has a unique maximal submodule $J(c,h,h_{W})$
and $L(c,h,h_{W})=V(c,h,h_{W})/J(c,h,h_{W})$ is unique (up to isomorphism)
irreducible highest weight module with highest weight $(c,h,h_{W})$.

\begin{theorem}
[\cite{Zhang-Dong}]\label{zd}Verma module $V(c,h,h_{W})$ is irreducible if and
only if $2h_{W}+\frac{m^{2}-1}{12}c\neq0$ for any $m\in\mathbb{N}$.
\end{theorem}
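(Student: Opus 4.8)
The plan is to use the contravariant (Shapovalov) form and reduce the question to the nonvanishing of a family of Gram determinants. Let $\sigma$ be the anti-involution of $\mathcal{L}$ with $\sigma(L_n)=L_{-n}$, $\sigma(W_n)=W_{-n}$, $\sigma(C)=C$, and let $\langle\cdot,\cdot\rangle$ be the induced symmetric bilinear form on $V(c,h,h_W)$ normalized by $\langle v,v\rangle=1$ and $\langle xw,w'\rangle=\langle w,\sigma(x)w'\rangle$. Distinct weight spaces are orthogonal, and the radical of $\langle\cdot,\cdot\rangle$ is precisely the maximal submodule $J(c,h,h_W)$; hence $V(c,h,h_W)$ is irreducible if and only if $\det G_k\neq 0$ for every $k\ge 1$, where $G_k$ is the Gram matrix of the PBW basis of the weight space $V(c,h,h_W)_{h+k}$. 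Equivalently, $V(c,h,h_W)$ is reducible if and only if it contains a singular vector.

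For the ``if'' direction I would exhibit singular vectors explicitly. The basic computation is $L_m W_{-m}v=m\bigl(2h_W+\tfrac{m^2-1}{12}c\bigr)v$, so write $\ell_m:=2h_W+\tfrac{m^2-1}{12}c$. When $\ell_1=0$ one checks directly that $W_{-1}v$ is singular: $W_nW_{-1}v=0$ and $L_nW_{-1}v=0$ for all $n\ge 1$ (using $\ell_1=2h_W=0$ and $[L_n,W_{-1}]=(n+1)W_{n-1}$). For general $m$ with $\ell_m=0$ (and $c\neq 0$; if $c=0$ then $\ell_m=\ell_1$ and we are in the previous case) I would look for a singular vector of the form $u_m=S_mv$ with $S_m$ a polynomial in $W_{-1},\dots,W_{-m}$, homogeneous of $L_0$-degree $m$. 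Since $\mathcal{L}_+$ is generated by $L_1,L_2,W_1,W_2$ and $S_m$ commutes with all $W_j$, one automatically gets $W_1u_m=W_2u_m=0$, so only $L_1u_m=L_2u_m=0$ must be imposed; this is a linear system in the coefficients of $S_m$ whose solvability I expect to reduce, by a short recursion on the parts of $S_m$ together with the identity above, exactly to the condition $\ell_m=0$. For instance $u_2=(cW_{-2}+6W_{-1}^2)v$ is singular whenever $\ell_2=0$. This produces, for every $h$, a proper submodule of $V(c,h,h_W)$ as soon as some $\ell_m$ vanishes.

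For the ``only if'' direction I would show that if $\ell_m\neq 0$ for all $m$ then every $G_k$ is nonsingular, by determining $\det G_k$ up to a nonzero constant. Ordering the PBW basis of $V(c,h,h_W)_{h+k}$ by the number of $W$-factors, one uses that (i) the pure-$W$ vectors span a totally isotropic subspace (the modes $W_n$ commute and annihilate $v$ for $n>0$), and (ii) in any pairing the $L_0$-eigenvalue $h$ never enters a term that also involves the $W$-modes --- there only $W_0\mapsto h_W$ and $C\mapsto c$ occur. This block structure forces $\det G_k$ to be a product of powers of linear forms in $(h,h_W,c)$; the explicit singular vectors above show $\ell_m\mid\det G_k$ for each $m\le k$; and a comparison of degrees, with leading term coming from the ``diagonal'' pure configurations, shows these are the only factors and that $h$ is absent, so $\det G_k=\kappa_k\prod_{m=1}^{k}\ell_m^{\,d_k(m)}$ with $\kappa_k\neq 0$ and $d_k(m)\ge 1$. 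Combining the two directions, $V(c,h,h_W)$ is reducible exactly when some $\ell_m$, $m\in\mathbb{N}$, vanishes.

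The main obstacle is the determinant formula in the last step --- specifically proving $d_k(m)\ge 1$ and the absence of $h$. The usual inductive proof of a Kac-type determinant formula (each primitive singular vector generates a Verma submodule, then induct) is not available here, since --- as this paper emphasizes --- a submodule of a $W(2,2)$-Verma module generated by a singular vector need not be a Verma module; so one must rely on the explicit pure-$W$ singular vectors together with a direct leading-term analysis of $G_k$. A secondary technical point is the solvability of the linear system defining $S_m$ for all $m$; and, if one prefers a form-free argument for ``only if'', the reduction to a singular vector that is a genuine $W_0$-eigenvector, which uses that $\mathrm{ad}_{W_0}$ is locally nilpotent on $U(\mathcal{L})$.
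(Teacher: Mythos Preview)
The paper does not prove this theorem; it is quoted from \cite{Zhang-Dong} and used as input. So there is no ``paper's own proof'' to compare against directly. That said, the machinery the paper imports in Section~\ref{Verma} (Lemma~\ref{key}, due to Billig/Jiang--Pei) yields a far shorter argument than your determinant route, and it is worth contrasting.

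Your ``if'' direction is on the right track and agrees with what the paper records: the singular vector lies in $\mathcal{W}$ (Theorem~\ref{u'}), and your examples $W_{-1}v$ and $cW_{-2}+6W_{-1}^2$ match the paper's list up to normalization. But you only \emph{expect} the recursion for $S_m$ to terminate; you have not shown it. The paper (following \cite{Jiang-Pei}) sidesteps this by first producing \emph{some} singular vector at level $p$ via Lemma~\ref{key} and then massaging it into $\mathcal{W}$ using $W_0$ (proof of Theorem~\ref{u'}); no explicit formula for $S_m$ is needed.

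Your ``only if'' direction has a genuine gap, which you already flag: you need the full factorization of $\det G_k$, in particular that $h$ is absent and that the $\ell_m$ exhaust the factors, and you do not prove either. The degree/leading-term heuristic is not a proof here, and as you note the standard inductive reduction fails because $U(\mathcal{L})u'$ need not be a Verma module. By contrast, Lemma~\ref{key} gives an elementary, determinant-free argument: if all $\ell_m\neq 0$ and $x\notin\mathbb{C}v$ is homogeneous, then either $\overline{x}\in\mathcal{W}$ and part~(b) produces $L_m x\neq 0$ for the largest $m$ with $W_{-m}$ in $\overline{x}$, or $\overline{x}\notin\mathcal{W}$ and part~(a) produces $W_n x\neq 0$ for the smallest $n$ with $L_{-n}$ in $\overline{x}$. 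Either way $x$ is not singular, so $V(c,h,h_W)$ has no singular vector besides $v$ and is irreducible. This is the argument implicit in \cite{Billig,Jiang-Pei,Zhang-Dong}, and it replaces your entire Gram-determinant program with a two-line filtration trick on the $W$-degree.
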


\bigskip

Intermediate series $\mathcal{L}$-modules are intermediate series
$\operatorname*{Vir}$-modules with trivial action of $W_{n}$. For
$\alpha,\beta\in\mathbb{C}$, define $V_{\alpha,\beta,0}=\operatorname*{span}%
_{\mathbb{C}}\{v_{m}:m\in\mathbb{Z}\}$ with
\[
L_{n}v_{m}=-(m+\alpha+\beta+n\beta)v_{m+n},\quad Cv_{m}=W_{n}v_{m}=0,
\]
for $n,m\in\mathbb{Z}.$ Then $V_{\alpha,\beta,0}\cong V_{\alpha+k,\beta,0}$
for $k\in\mathbb{Z}$, so when $\alpha\in\mathbb{Z}$ we may assume $\alpha=0$.
Module $V_{\alpha,\beta,0}$ is reducible if and only if $\alpha\in\mathbb{Z}$
and $\beta\in\{0,1\}$. Define $V_{0,0,0}^{\prime}:=V_{0,0,0}/\mathbb{C}v_{0}$,
$V_{0,1,0}^{\prime}:=\bigoplus\limits_{m\neq-1}\mathbb{C}v_{m}\subseteq
V_{0,1,0}$ and $V_{\alpha,\beta,0}^{\prime}=V_{\alpha,\beta,0}$ otherwise.
Then $V_{\alpha,\beta,0}^{\prime}$ are all irreducible modules from
intermediate series. See \cite{Liu-Zhu}.

\begin{theorem}
[\cite{Liu-Zhu}]An irreducible weight $\mathcal{L}$-module with
finite-dimen\-sional weight spaces is isomorphic either to a highest (or
lowest) weight module or to $V_{\alpha,\beta,0}^{\prime}$ for some
$\alpha,\beta\in\mathbb{C}$.
\end{theorem}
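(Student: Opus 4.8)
The plan is to reduce the statement to O.\ Mathieu's classification of simple Harish-Chandra modules over the Virasoro algebra. So let $M$ be an irreducible weight $\mathcal{L}$-module with $\dim M_{\lambda}<\infty$ for all $\lambda$. Since $U(\mathcal{L})$ has countable dimension over $\mathbb{C}$, Schur's lemma gives $\operatorname{End}_{\mathcal{L}}(M)=\mathbb{C}$, so $C$ acts by a scalar $c$; and because $M$ is irreducible and $\mathcal{L}$ is $\mathbb{Z}$-graded, $\operatorname{supp}(M)=\{\lambda:M_{\lambda}\neq 0\}$ lies in a single coset $\lambda_{0}+\mathbb{Z}$. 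I would then split into cases according to the shape of $\operatorname{supp}(M)$.

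If $\operatorname{supp}(M)$ has a smallest element $\mu$, I would pick a $W_{0}$-eigenvector $0\neq m\in M_{\mu}$ (one exists as $W_{0}$ preserves the finite-dimensional space $M_{\mu}$). Since $L_{n},W_{n}$ with $n>0$ strictly lower the $L_{0}$-weight and $M_{\mu-n}=0$, the vector $m$ is a highest weight vector with data $(c,\mu,\mu_{W})$, where $\mu_{W}$ is its $W_{0}$-eigenvalue; hence $V(c,\mu,\mu_{W})\twoheadrightarrow M$ and, by irreducibility, $M\cong L(c,\mu,\mu_{W})$ is a highest weight module. If instead $\operatorname{supp}(M)$ has a largest element, the same argument applied through the involution $L_{n}\mapsto -L_{-n}$, $W_{n}\mapsto -W_{-n}$, $C\mapsto -C$ of $\mathcal{L}$ produces a lowest weight module.

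The remaining, and hardest, case is $\operatorname{supp}(M)=\lambda_{0}+\mathbb{Z}$; here I expect the main obstacle, namely proving that $M$ is then uniformly bounded: $\sup_{k\in\mathbb{Z}}\dim M_{\lambda_{0}+k}<\infty$. This is the analogue for $\mathcal{L}$ of the technical heart of Mathieu's theorem, and I would prove it the same way, using that $\mathcal{L}$ is $\mathbb{Z}$-graded with polynomial structure constants (an exp-polynomial Lie algebra, where the machinery of Billig and Zhao for weight modules is available). Assuming the multiplicities $\dim M_{\lambda_{0}+k}$ were unbounded, a careful study of the action of the $L_{n}$ --- via the $\mathfrak{sl}_{2}$-type subalgebras generated by $L_{\pm 1}$ and by $L_{\pm 2}$, and the balancing estimates they yield on weight modules --- would produce a nonzero weight vector killed by all $L_{n},W_{n}$ with $n\ge N$ (or with $n\le -N$) for some $N\ge 1$, and an induction over $\operatorname{Vir}$ would then bring $N$ down to $1$, giving an honest highest (or lowest) weight vector; that contradicts $\operatorname{supp}(M)=\lambda_{0}+\mathbb{Z}$. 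Hence $M$ is uniformly bounded.

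It then remains to treat a uniformly bounded irreducible $M$ with full support, and here I would show the abelian ideal $\mathcal{W}=\bigl(\bigoplus_{n\in\mathbb{Z}}\mathbb{C}W_{n}\bigr)\oplus\mathbb{C}C$ acts trivially. The subspace $M^{\mathcal{W}}=\{m\in M:Cm=0\text{ and }W_{n}m=0\text{ for all }n\}$ is an $\mathcal{L}$-submodule, hence $0$ or $M$, so it is enough to exclude the first possibility. From $[L_{n},W_{-n}]=2nW_{0}+\tfrac{n^{3}-n}{12}C$ and the trace identity $\operatorname{tr}(AB|_{V})=\operatorname{tr}(BA|_{W})$ for $A\colon V\to W$, $B\colon W\to V$, the numbers $\operatorname{tr}(W_{-n}L_{n}|_{M_{\lambda}})$ satisfy telescoping recursions with increments $2n\operatorname{tr}(W_{0}|_{M_{\lambda}})+\tfrac{n^{3}-n}{12}c\dim M_{\lambda}$; since $\dim M_{\lambda}$ is bounded while each $\dim M_{\lambda}\ge 1$, summing over long windows forces $c=0$ and $\operatorname{tr}(W_{0}|_{M_{\lambda}})=0$ for every $\lambda$, and rerunning the same bookkeeping for $W_{-n}W_{n}$ and for products involving one $L_{k}$ shows each $W_{n}$ acts locally nilpotently; irreducibility upgrades this to $W_{n}M=0$ for all $n$, whence also $CM=0$. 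Therefore $M$ factors through $\mathcal{L}/\mathcal{W}$, which is the Witt algebra, i.e.\ $M$ is an irreducible Harish-Chandra module over $\operatorname{Vir}$ with $C$ acting as $0$; being uniformly bounded it is neither highest nor lowest weight, so by Mathieu's theorem it lies in the intermediate series, and comparing with the irreducible intermediate-series $\mathcal{L}$-modules listed before the theorem gives $M\cong V_{\alpha,\beta,0}^{\prime}$ for suitable $\alpha,\beta\in\mathbb{C}$. Beyond the uniform-boundedness dichotomy, the only substantial ingredient used is Mathieu's classification over $\operatorname{Vir}$.
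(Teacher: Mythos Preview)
The paper does not prove this theorem at all: it is quoted verbatim from Liu--Zhu with the citation \cite{Liu-Zhu} and no argument, so there is no ``paper's own proof'' to compare your attempt against. Your sketch is therefore not a reconstruction of anything in this paper but an outline of the Liu--Zhu result itself.

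As an outline, your architecture is the right one and matches Liu--Zhu: trichotomy on the support, highest/lowest weight in the bounded cases, uniform boundedness in the remaining case, triviality of the abelian ideal $\mathcal{W}=\bigoplus_{n}\mathbb{C}W_{n}\oplus\mathbb{C}C$ on a uniformly bounded irreducible, and finally Mathieu's theorem over $\operatorname{Vir}$. Two places are genuinely incomplete, however. First, the uniform-boundedness step you explicitly flag as ``the technical heart'' and then only wave at; the $\mathfrak{sl}_{2}$-type balancing you describe does not by itself produce a vector killed by all $L_{n},W_{n}$ with $n\ge N$, and this is precisely where Liu--Zhu's work lies. Second, and more concretely, your trace argument for killing $\mathcal{W}$ has a gap at the end: from $c=0$ and $\operatorname{tr}(W_{0}|_{M_{\lambda}})=0$ you assert that further bookkeeping yields local nilpotence of each $W_{n}$ and that ``irreducibility upgrades this to $W_{n}M=0$''. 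That last inference is not valid as stated---commuting locally nilpotent operators on an irreducible module need not vanish, and you have not exhibited a nonzero element of $M^{\mathcal{W}}$. One needs an additional argument (for instance, controlling the generalized $W_{0}$-eigenspaces and using $[L_{n},W_{0}]=nW_{n}$ to propagate) to conclude $M^{\mathcal{W}}\neq 0$.
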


Tensor product $V_{\alpha,\beta,0}^{\prime}\otimes L(c,h,h_{W})$ carries an
$\mathcal{L}$-module structure with action
\[
a(v_{n}\otimes x):=av_{n}\otimes x+v_{n}\otimes ax\text{, for any }%
a\in\mathcal{L}\text{,}\ x\in L(c,h,h_{W}).
\]
Note that $W_{m}(v_{n}\otimes v)=v_{n}\otimes W_{m}v$. Like in
$\operatorname*{Vir}$ case $\{v_{n}\otimes v:n\in\mathbb{Z}\}$ generates
$V_{\alpha,\beta,0}^{\prime}\otimes L(c,h,h_{W})$ (see (\ref{hh}) in Lemma
\ref{teh}). Moreover, all weight subspaces are infinite-dimensional:
\[
\left(  V_{\alpha,\beta,0}^{\prime}\otimes L\left(  c,h,h_{W}\right)  \right)
_{h+m-\alpha-\beta}=\bigoplus\limits_{n\in\mathbb{Z}_{+}}\mathbb{C}%
v_{n-m}\otimes L\left(  c,h,h_{W}\right)  _{h+n}%
\]

\section{Verma module structure\label{Verma}}

\textbf{In this section we assume }$2h_{W}+\frac{p^{2}-1}{12}c=0$\textbf{\ for
some }$p\in\mathbb{N}$\textbf{.}

Some results of this section are similar to those presented in
\cite{Jiang-Pei} and motivated by \cite{Billig}. However, since $W_{0}$ does
not act semisimply in general (unlike to $I_{0}$ in the Heisenberg-Virasoro
algebra), submodules generated by some singular vectors in Verma modules are
not isomorphic to Verma modules so the maximal submodule $J(c,h,h_{W})$ is not
necessarily cyclic on a singular vector. In fact, we prove the existence of
subsingular vectors in some Verma modules. Therefore, Corollary 3.6 and later
results in \cite{Jiang-Pei} are not correct in general.

First introduce $W$-degree on $\mathcal{L}_{-}$:
\[
\deg_{W}L_{-n}=0,\text{\quad}\deg_{W}W_{-n}=1,
\]
which induces $\mathbb{Z}$-grading on $U(\mathcal{L})$ and on $V(c,h,h_{W})$
\[
\deg_{W}W_{-m_{s}}\cdots W_{-m_{1}}L_{-n_{t}}\cdots L_{-n_{1}}v=s.
\]
Obviously, this grading depends on a basis. Denote with $V_{(k)}^{W}%
(c,h,h_{W})$ the homogeneous components respective to $W$-degree in a PBW\ basis.

For a nonzero $x\in V(c,h,h_{W})$ we denote by $\overline{x}$ its lowest
nonzero homogeneous component with respect to $W$-degree (in a standard PBW
basis). If $x\in V_{(k)}^{W}(c,h,h_{W})$ and $n>0$, then
\begin{align}
W_{n}x  &  \in V(c,h,h_{W})_{(k)}^{W}\oplus V(c,h,h_{W})_{(k+1)}^{W}%
\label{w1}\\
L_{n}x  &  \in V(c,h,h_{W})_{(k-1)}^{W}\oplus V(c,h,h_{W})_{(k)}^{W}.\nonumber
\end{align}

We can define $L$-degree likewise:
\[
\deg_{L}L_{-n}=1,\text{\quad}\deg_{L}W_{-n}=0.
\]
Let
\[
\mathcal{W}=V_{(0)}^{L}(c,h,h_{W})=\left\{  W_{-m_{s}}\cdots W_{-m_{1}}%
v:m_{s}\geq\cdots\geq m_{1}>0\right\}  .
\]

Now we state the main result of this section, a structure theorem for Verma
modules over the Lie algebra $W(2,2)$.

\begin{theorem}
\label{glavni} Let $2h_{W}+\frac{p^{2}-1}{12}c=0$ for some $p\in\mathbb{N}$.
Then there exists a singular vector $u^{\prime}\in V(c,h,h_{W})_{h+p}%
\cap\mathcal{W}$ such that $\overline{u^{\prime}}=W_{-p}v$ and $U(\mathcal{L}%
)u^{\prime}\cong V(c,h+p,h_{W})$. Images of vectors $W_{-m_{s}}\cdots
W_{-m_{1}}L_{-n_{t}}\cdots L_{-n_{1}}v$ such that $m_{i}\neq p$ form a PBW
basis for $L^{\prime}(c,h,h_{W}):=V(c,h,h_{W})/U(\mathcal{L})u^{\prime}$. Moreover:

\begin{description}
\item[(i)] Let $h\neq h_{W}+\frac{(13p+1)(p-1)}{12}+\frac{(1-r)p}{2}$ for all
$r\in\mathbb{N}$. Then $U(\mathcal{L})u^{\prime}=J(c,h,h_{W})$ is a maximal
submodule in $V(c,h,h_{W})$ and $L^{\prime}(c,h,h_{W})=L(c,h,h_{W})$ is irreducible.

\item[(ii)] Suppose $L^{\prime}(c,h,h_{W})$ is reducible. Then there exists a
subsingular vector $u\in V(c,h,h_{W})$ such that $\overline{u}=L_{-p}^{r}v$
for some $r\in\mathbb{N}$. Vectors $u$ and $u^{\prime}$ generate a maximal
submodule $J(c,h,h_{W})$ and the images of all vectors $W_{-m_{s}}\cdots
W_{-m_{1}}L_{-n_{t}}\cdots L_{-n_{1}}v$ in which neither $W_{-p}$ nor
$L_{-p}^{k}$ for $k\geq r$ occur as factors form a PBW basis for irreducible
module $L(c,h,h_{W})=V(c,h,h_{W})/J(c,h,h_{W})$.
\end{description}
\end{theorem}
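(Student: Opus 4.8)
The plan is to first construct the singular vector $u'$ explicitly in the subspace $\mathcal{W}$. Since $W_n$ commutes with all $W_m$, the subspace $\mathcal{W}$ is a module for the Virasoro subalgebra spanned by $\{L_n, C\}$, twisted by the $W_0$-action; more precisely, I would observe that $\mathcal{W}$ together with the action of $L_n$ behaves like a Verma-type module whose ``highest weight'' data is governed by $h_W$ rather than $h$. The condition $2h_W + \frac{p^2-1}{12}c = 0$ is exactly the Kac-determinant-type vanishing condition (Theorem \ref{zd}), so one expects a singular vector at level $p$. Concretely I would write $u' = W_{-p}v + (\text{lower } W\text{-degree is impossible, so}) + \sum c_{\lambda} W_{-\lambda}v$ where the sum runs over partitions $\lambda$ of $p$ into at least two parts, i.e. $\overline{u'} = W_{-p}v$ since $W$-degree $1$ is the minimum in $\mathcal{W}$ and within $W$-degree... wait — actually all of $\mathcal{W}$ has varying $W$-degree, so the correct statement is that $u'$ is a linear combination of $W_{-m_s}\cdots W_{-m_1}v$ with $\sum m_i = p$, and $\overline{u'} = W_{-p}v$ means the coefficient of $W_{-p}v$ is nonzero (normalized to $1$) and it is the unique term of $W$-degree $1$. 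I would determine the coefficients by imposing $L_n u' = 0$ for $n = 1, 2$ (which suffices since $L_1, L_2$ generate $\mathcal{L}_+$), using the bracket relations \eqref{1} and the grading \eqref{w1}; this is a finite linear system whose solvability is equivalent to the given numerical condition. The isomorphism $U(\mathcal{L})u' \cong V(c, h+p, h_W)$ then follows because $u'$ is annihilated by $\mathcal{L}_+$, has $L_0$-eigenvalue $h+p$, has $W_0$-eigenvalue $h_W$ (as $W_0$ commutes with $W_{-m}$ up to lower-degree terms — here I must check $W_0 u' = h_W u'$ carefully, using $[W_0, W_{-m}] = 0$ and $[W_0, L_{-m}]$ — but $u' \in \mathcal{W}$ has no $L$-factors, so $W_0 u' = h_W u'$ is immediate), and because $V(c, h+p, h_W)$ is itself generated freely, so the natural surjection $V(c, h+p, h_W) \twoheadrightarrow U(\mathcal{L})u'$ must be injective by a PBW/graded-dimension count (the target sits inside the free module $V(c,h,h_W)$ and no PBW monomial acting on $u'$ can vanish).

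For the PBW basis claim for $L'(c, h, h_W) = V(c,h,h_W)/U(\mathcal{L})u'$: the submodule $U(\mathcal{L})u'$ is spanned by PBW monomials applied to $u'$, and since $\overline{u'} = W_{-p}v$, a triangularity/leading-term argument (order PBW monomials first by $W$-degree, then lexicographically) shows that the images of monomials $W_{-m_s}\cdots W_{-m_1}L_{-n_t}\cdots L_{-n_1}v$ containing at least one factor $W_{-p}$ span $U(\mathcal{L})u'$, and conversely these are linearly independent modulo... — more carefully, I would argue that the leading term (lowest $W$-degree, then leading $L$-part) of $X \cdot u'$ for a PBW monomial $X$ is $X \cdot W_{-p}v$ reordered into PBW form, and these leading terms are exactly the PBW monomials in which $W_{-p}$ occurs. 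Hence the complementary set (monomials with all $m_i \neq p$) descends to a basis of the quotient. This is the ``main obstacle'': making the leading-term bookkeeping rigorous, because applying $L_{-n}$ to $u'$ can raise or leave fixed the $W$-degree (by \eqref{w1}), so one needs the filtration argument to be compatible with both gradings simultaneously; I expect to phrase it via a suitable $\mathbb{Z}^2$-filtration and pass to the associated graded.

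For part (i): if $h \neq h_W + \frac{(13p+1)(p-1)}{12} + \frac{(1-r)p}{2}$ for all $r \in \mathbb{N}$, I claim $L'(c,h,h_W)$ has no singular vectors. The point is that in $V(c,h,h_W)$ there is at most a ``second'' vanishing line corresponding to a singular vector built on powers of $L_{-p}$; such a vector would have leading term $L_{-p}^r v$ (W-degree $0$), hence survive in $L'(c,h,h_W)$, and its existence forces the displayed equation on $h$ (this is where the arithmetic coming from $[L_n, L_{-p}^r]$ and the $\delta_{n,-m}$ central terms in both \eqref{1} lines produces the coefficient $\frac{(13p+1)(p-1)}{12} + \frac{(1-r)p}{2}$). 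Any singular vector in $L'(c,h,h_W)$ must, by the structure just established, have $W$-degree $0$ leading term, i.e. lie in the image of $\mathbb{C}[L_{-n} : n \geq 1]v$, and a standard Virasoro-type argument (the induced action on the $W$-degree-zero quotient is a Virasoro Verma module at central charge $c$ and highest weight $h$, twisted by the $L_{-p}$-relation) pins down when such a vector can exist. If it cannot, $L'(c,h,h_W)$ is irreducible and equals $L(c,h,h_W)$, so $U(\mathcal{L})u' = J(c,h,h_W)$.

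For part (ii): if $L'(c,h,h_W)$ is reducible, then by the above it contains a singular vector whose leading term (with respect to the $W$-then-$L$ filtration) is a multiple of $L_{-p}^r v$ for the minimal such $r \in \mathbb{N}$; lift it to $u \in V(c,h,h_W)$ with $\overline{u} = L_{-p}^r v$ — this $u$ is \emph{subsingular}, not singular, precisely because $W_{-p}$ does not annihilate it in $V(c,h,h_W)$ (only modulo $U(\mathcal{L})u'$), which is the phenomenon the introduction emphasizes. Then $J(c,h,h_W)$ is generated by $u$ and $u'$: it contains both, and the quotient $V/(U(\mathcal{L})u' + U(\mathcal{L})u) = L'(c,h,h_W)/(\text{image of } u)$ is irreducible since we took $r$ minimal, so by maximality of $J$ and the inclusion $U(\mathcal{L})u' + U(\mathcal{L})u \subseteq J$ we get equality. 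The final PBW basis statement for $L(c,h,h_W)$ is again a leading-term count: $U(\mathcal{L})u' + U(\mathcal{L})u$ is spanned by PBW monomials in which $W_{-p}$ occurs (from $u'$) or $L_{-p}^k$ with $k \geq r$ occurs (from $u$, whose leading $L$-part is $L_{-p}^r$), and the complementary monomials map to a basis of the irreducible quotient. The main obstacle throughout remains the simultaneous two-variable leading-term/filtration argument; once that machinery is set up, parts (i) and (ii) are dichotomous consequences of whether the single numerical exception occurs.
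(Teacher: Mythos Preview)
Your broad architecture matches the paper's --- build $u'$, show the complement-of-$W_{-p}$ monomials are a basis for $L'$, then analyze singular vectors of $L'$ --- but two load-bearing steps are missing an actual mechanism.

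First, the construction of $u'\in\mathcal W$. You propose to solve the linear system $L_1u'=L_2u'=0$ inside $\mathcal W_p$ and assert its solvability ``is equivalent to the given numerical condition.'' That equivalence is not obvious and you give no argument for it. The paper does \emph{not} solve this system: it starts from an arbitrary singular vector $s\in V_{h+p}$ (whose existence, with $\overline s\in\{W_{-p}v,\,L_{-p}v\}$, is taken from Jiang--Pei), and if $\overline s=L_{-p}v$ sets $u'=\tfrac1p(W_0s-h_Ws)$, which is again singular with $\overline{u'}=W_{-p}v$. That $u'$ actually lies in $\mathcal W$ is then forced by a ``differentiation'' lemma (the paper's Lemma~\ref{key}): for suitable $n$, the lowest $W$-degree part of $W_n x$ or $L_m x$ is $n(2h_W+\tfrac{n^2-1}{12}c)\,\partial\overline x/\partial L_{-n}$ or $m(2h_W+\tfrac{m^2-1}{12}c)\,\partial\overline x/\partial W_{-m}$. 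This lemma is the engine of the whole proof and never appears in your plan.

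Second, and more seriously, you assert that any singular vector in $L'$ has $W$-degree-zero leading term ``lying in $\mathbb C[L_{-n}:n\ge1]v$'', and then that a ``standard Virasoro-type argument'' produces the condition on $h$. Neither step works as stated. The paper proves the much sharper fact $\overline u=L_{-p}^r v$ (only $L_{-p}$, no other $L_{-n}$, no $W_{-m}$), and this is \emph{not} a Virasoro phenomenon: it comes from repeatedly applying the differentiation lemma above with $n\ne p$ (respectively $m\ne p$) to kill any $L_{-n}$ (resp.\ $W_{-m}$) factor, using that the scalar $2h_W+\tfrac{n^2-1}{12}c$ vanishes only at $n=p$. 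The formula for $h$ then comes from a direct coefficient computation (comparing $L_p u$ and $L_i u$ against the requirement that they lie in $J'$), not from Kac-determinant data. Your ``take $r$ minimal'' argument for part~(ii) also does not close: irreducibility of $V/\langle u,u'\rangle$ is obtained in the paper by showing (again via the differentiation lemma, now adapted to the basis $B'$) that any further singular vector would satisfy $\overline x=L_{-p}^s v$ with $s<r$, which contradicts the \emph{uniqueness} of $r$ from the $h$-computation --- minimality alone, without that uniqueness, is not enough since a singular vector of the deeper quotient need not lift to one of $L'$.
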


We suspect $L^{\prime}(c,h,h_{W})$ is reducible if and only if $h=h_{W}%
+\frac{(13p+1)(p-1)}{12}+\frac{(1-r)p}{2}$. The proof of Theorem \ref{glavni}
will be presented in a series of lemmas and theorems in the rest of this
section. We will follow the Billig's \cite{Billig} idea, also used in
\cite{Jiang-Pei}.

The following lemma will often be used throughout this section (see also
analogous result in \cite{Billig}).

\begin{lemma}
[\cite{Jiang-Pei}]\label{key}Let $0\neq x\in V(c,h,h_{W})$ and $\deg
_{W}\overline{x}=k$.

\begin{description}
\item[a)] If $\overline{x}\notin\mathcal{W}$ and $n\in\mathbb{N}$ is the
smallest, such that $L_{-n}$ occurs as a factor in one of the terms in
$\overline{x}$, then the part of $W_{n}x$ of the $W$-degree $k$ is given by
\[
n\left(  2h_{W}+\frac{n^{2}-1}{12}c\right)  \frac{\partial\overline{x}%
}{\partial L_{-n}}.
\]

\item[b)] If $\overline{x}\in\mathcal{W}$, $\overline{x}\notin\mathbb{C}v$ and
$m\in\mathbb{N}\ $is maximal, such that $W_{-m}$ occurs as a factor in one of
the terms of $\overline{x}$, then the part of $L_{m}x$ of the $W$-degree $k-1$
is given by
\[
m\left(  2h_{W}+\frac{m^{2}-1}{12}c\right)  \frac{\partial\overline{x}%
}{\partial W_{-m}}.
\]

\end{description}
\end{lemma}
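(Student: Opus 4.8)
The plan is to prove both parts by a direct computation using the commutation relations, organized by $W$-degree. Recall that for $n>0$, acting by $W_n$ or $L_n$ on a monomial in $\mathcal{L}_-$ can only change the $W$-degree as recorded in (\ref{w1}). Since $\overline{x}$ is the lowest $W$-homogeneous component of $x$, the only contributions to the part of $W_nx$ of $W$-degree $k$ come from $W_n$ acting on $\overline{x}$ (higher components contribute only in degrees $\geq k$, by (\ref{w1}), and in degree $k$ only from the $V^W_{(k)}$-part of $W_n$ on the degree-$(k-1)$ component, which does not exist since $\overline{x}$ is lowest); similarly the part of $L_mx$ of $W$-degree $k-1$ comes only from $L_m$ acting on $\overline{x}$. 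So it suffices to work with $x=\overline{x}$ a $W$-homogeneous element, and to extract the indicated homogeneous piece.

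For part a), write $\overline{x}$ as a sum of PBW monomials $W_{-m_s}\cdots W_{-m_1}L_{-n_t}\cdots L_{-n_1}v$, all of $W$-degree $k$, and commute $W_n$ to the right through the $W$-block and then through the $L$-block. Since $[W_n,W_{-m}]=0$, $W_n$ passes freely through the $W$'s; the $W$-degree-$k$ part of the result therefore comes from commuting $W_n$ through the $L$-block only in ways that keep all $s$ of the $W$-factors, i.e. from $[W_n,L_{-n_j}]=[L_{-n_j},W_n]^{-}$-type terms that either produce a new $W_{n-n_j}$ (raising considerations: if $n-n_j>0$ this annihilates $v$ eventually or produces a $W$ of positive index; if $n=n_j$ it produces the central term $h_W$-contribution) — one checks that the only surviving degree-$k$ contribution is the ``diagonal'' one where $W_n$ hits the factor $L_{-n}$, picking up $(n-(-n))W_{0}=2nW_0\leadsto 2nh_W$ together with the central term $\delta_{n,-(-n)}\frac{n^3-n}{12}C\leadsto\frac{n^3-n}{12}c$; all other commutators lower the number of $L$-factors without compensating, hence raise the $W$-degree or vanish. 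Since $n$ is minimal with $L_{-n}$ appearing, no cancellations from smaller indices interfere, and summing over the occurrences of $L_{-n}$ in the monomials of $\overline{x}$ is exactly the formal derivative $\partial\overline{x}/\partial L_{-n}$, multiplied by $n(2h_W+\frac{n^2-1}{12}c)$. Part b) is the mirror image: for $\overline{x}\in\mathcal{W}$ a sum of monomials $W_{-m_s}\cdots W_{-m_1}v$, commute $L_m$ to the right; $[L_m,W_{-m_i}]=(m+m_i)W_{m-m_i}+\delta_{m,m_i}\frac{m^3-m}{12}C$, and the part of $L_m x$ of $W$-degree $k-1$ arises exactly when $L_m$ deletes one $W$-factor, namely the factor $W_{-m}$ (which contributes $2m W_0\leadsto 2mh_W$ plus the central $\frac{m^3-m}{12}c$), with $m$ maximal so that no $W_{m-m_i}$ with $m-m_i>0$ survives to re-raise the degree; the sum over occurrences of $W_{-m}$ gives $\partial\overline{x}/\partial W_{-m}$ with coefficient $m(2h_W+\frac{m^2-1}{12}c)$.

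The main obstacle is the bookkeeping in the two ``survival'' arguments: one must verify carefully that every term other than the claimed diagonal one either lands in strictly higher $W$-degree (in a) or is killed because it involves a positive-index generator that, after being commuted all the way to the right, annihilates $v$, and that the minimality (resp. maximality) of the index $n$ (resp. $m$) rules out partial cancellations among the several ways of producing a given monomial. This is a finite but somewhat delicate induction on the length of the monomial; since the statement is attributed to \cite{Jiang-Pei}, I would present this computation only in outline and cite the analogous Virasoro/Heisenberg-Virasoro computation in \cite{Billig}. $\rule{0.5em}{0.5em}$
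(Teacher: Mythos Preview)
Your approach is correct and is precisely the standard argument; the paper itself does not give a proof of this lemma (it is cited from \cite{Jiang-Pei}), but your computation is the same one the paper carries out explicitly for the closely related Lemma~\ref{key'}.

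One small slip in your case analysis for part a): since $n$ is the \emph{smallest} index with $L_{-n}$ occurring in $\overline{x}$, every $L$-index $n_j$ in every monomial of $\overline{x}$ satisfies $n_j\ge n$, so the case ``$n-n_j>0$'' you mention never arises. The relevant non-diagonal case is $n_j>n$, where $[W_n,L_{-n_j}]=(n+n_j)W_{n-n_j}$ has strictly \emph{negative} index and hence contributes in $W$-degree $k+1$, not $k$; and when $n_j=n$ the resulting $W_0$ must be commuted past the remaining $L_{-n_i}$'s to reach $v$, each such commutator $[W_0,L_{-n_i}]=n_iW_{-n_i}$ again landing in degree $k+1$, leaving only the scalar $h_W$ in degree $k$. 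With this clarification your bookkeeping goes through cleanly and no further induction is needed.
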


As a first application of this lemma we have the following result:

\begin{lemma}
[\cite{Jiang-Pei}]\label{w}Let $2h_{W}+\frac{p^{2}-1}{12}c=0$. Then there is a
singular vector $s\in V(c,h,h_{W})_{h+p}$ such that $\overline{s}=W_{-p}v$ or
$\overline{s}=L_{-p}v$.
\end{lemma}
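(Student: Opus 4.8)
\textbf{Proof proposal for Lemma \ref{w}.}
The plan is to produce a singular vector in the weight space $V(c,h,h_W)_{h+p}$ by a dimension/linear-algebra argument, and then use Lemma \ref{key} to control its leading $W$-homogeneous component. First I would note that $V(c,h,h_W)_{h+p}$ is finite-dimensional with PBW basis indexed by partitions of $p$ into $L$-parts and $W$-parts, and that a vector $s$ in this space is singular exactly when $L_1 s = W_1 s = 0$ (since $L_1,W_1$ generate $\mathcal{L}_+$ together with $L_2$, but $L_2 = \tfrac12[L_1,L_3]$-type relations let us reduce to the two generators $L_1,W_1$; more carefully, $\mathcal{L}_+$ is generated by $L_1,L_2$ and $W_1,W_2$, and one checks $L_1 s=W_1 s=0$ suffices in degree $p$ using the brackets and the fact that $W$'s commute). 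The maps $L_1,W_1$ send $V(c,h,h_W)_{h+p}$ into $V(c,h,h_W)_{h+p-1}$. The key point is that the hypothesis $2h_W+\tfrac{p^2-1}{12}c=0$ is exactly the condition making the determinant of the relevant contravariant (Shapovalov-type) form on $V(c,h,h_W)_{h+p}$ vanish — this is how Theorem \ref{zd} is proved — so there is a nonzero vector in the radical, i.e.\ a singular vector $s\in V(c,h,h_W)_{h+p}$.

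Next I would pin down $\overline{s}$, the lowest $W$-degree homogeneous component of $s$. Suppose $\overline{s}\notin\mathcal{W}$; then by part a) of Lemma \ref{key}, with $n$ the smallest index such that $L_{-n}$ appears in $\overline{s}$, the $W$-degree-$k$ part of $W_n s$ equals $n\bigl(2h_W+\tfrac{n^2-1}{12}c\bigr)\tfrac{\partial\overline{s}}{\partial L_{-n}}$. Since $s$ is singular this must vanish, and $\tfrac{\partial\overline{s}}{\partial L_{-n}}\neq0$ by choice of $n$, so we need $2h_W+\tfrac{n^2-1}{12}c=0$; combined with $2h_W+\tfrac{p^2-1}{12}c=0$ this forces $n^2=p^2$, hence $n=p$. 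But $n\le p$ always (the weight is $h+p$), and if $n=p$ the only monomial of $L$-degree $\ge1$ in weight $p$ with smallest $L$-index $p$ is $L_{-p}v$ itself, so $\overline{s}$ is a multiple of $L_{-p}v$. If instead $\overline{s}\in\mathcal{W}$, then $\overline{s}$ is a combination of $W$-monomials of weight $p$; taking $m$ maximal with $W_{-m}$ occurring and applying part b) of Lemma \ref{key} to $L_m s=0$ gives $2h_W+\tfrac{m^2-1}{12}c=0$, hence $m=p$, so again $\overline{s}$ is forced to be a multiple of $W_{-p}v$ (the unique $W$-monomial of weight $p$ containing $W_{-p}$). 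Rescaling, $\overline{s}=W_{-p}v$ or $\overline{s}=L_{-p}v$.

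The main obstacle I anticipate is the first step: establishing \emph{existence} of a singular vector in $V(c,h,h_W)_{h+p}$ cleanly. One cannot simply invoke Theorem \ref{zd}, which only asserts reducibility; I need that the maximal submodule meets this particular weight space, equivalently that the contravariant form degenerates precisely in degree $p$ (not merely in some higher degree). The honest route is to exhibit the singular vector semi-explicitly: seek $s$ of the form $L_{-p}v + (\text{lower }L\text{-degree terms in }W\text{'s}) + \cdots$ or dually $W_{-p}v+\cdots$ and solve the equations $L_1 s = W_1 s = 0$ recursively on $W$-degree using (\ref{w1}) and Lemma \ref{key} — the vanishing of the leading obstruction at each stage is guaranteed by $2h_W+\tfrac{p^2-1}{12}c=0$, and the remaining equations are triangular and solvable. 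Since the lemma is attributed to \cite{Jiang-Pei}, I would either reproduce their construction or give the short self-contained version just sketched; in the write-up I would lead with the structural argument via Lemma \ref{key} for the shape of $\overline{s}$ and reduce the existence claim to the Shapovalov determinant computation underlying Theorem \ref{zd}.
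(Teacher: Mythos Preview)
The paper does not actually prove this lemma: it is stated with attribution to \cite{Jiang-Pei}, prefaced only by the remark that it is ``a first application'' of Lemma~\ref{key}. So there is no detailed proof in the paper to compare against; your use of Lemma~\ref{key} to pin down $\overline{s}$ is exactly the argument the paper is pointing to, and your sketch of that part is correct in substance.

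Two small corrections to the write-up. First, the claim that $L_1s=W_1s=0$ suffices for singularity is false: since $[L_n,W_1]=(n-1)W_{n+1}$ you cannot reach $W_2$ from $L_1,L_2,W_1$, so you genuinely need to check $L_1,L_2,W_1,W_2$ (your parenthetical already concedes this, so just drop the stronger claim). Second, the step ``combined with $2h_W+\tfrac{p^2-1}{12}c=0$ this forces $n^2=p^2$'' needs $c\neq 0$; when $c=0$ one has $h_W=0$ and the equation $2h_W+\tfrac{n^2-1}{12}c=0$ holds for every $n$, so Lemma~\ref{key} gives no constraint. In that degenerate case $W_{-1}v$ is already singular (indeed $W_{-p}v$ is singular for every $p$), so the conclusion still holds, but the argument has to branch.

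On existence: your diagnosis is right that Theorem~\ref{zd} alone does not locate the singular vector at level exactly $p$, and that one must either compute the degree-$p$ block of the Shapovalov form or construct $s$ recursively. This is precisely what the citation to \cite{Jiang-Pei} is covering, and the paper is content to outsource it. Your recursive construction sketch (solve $W_ns=0$ and $L_ns=0$ degree by degree, with the obstruction at each step killed by $2h_W+\tfrac{p^2-1}{12}c=0$) is the right idea and is in fact how the paper proceeds later for related vectors (see the $r=1$ discussion after Lemma~\ref{L-rstupanj}).
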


For $x \in V(c,h,h_{W})$ we define $x_{n}$ inductively. Let $x_{0}=x$ and
$x_{n}=x_{n-1}-\overline{x_{n-1}}$. Then
\[
x=\sum_{n\geq0}\overline{x_{n}}%
\]
is a decomposition of $x$ by $W$-degree, i.e., $\deg_{W}\overline{x_{i}}%
<\deg_{W}\overline{x_{j}}$ if $i<j$. Moreover, $\overline{x_{n}}$ is
homogeneous respective to $W$-degree (all components of $\overline{x_{n}}$
have the same $W$-degree). For example
\[
x=\underbrace{L_{-3}v}_{\overline{x_{0}}}+\overbrace{\underbrace{W_{-3}%
v+W_{-1}L_{-2}v}_{\overline{x_{1}}}+\overbrace{\underbrace{W_{-1}^{3}v}%
}_{\overline{x_{2}}}^{x_{2}}}^{x_{1}}.
\]

\begin{theorem}
\label{u'}Let $2h_{W}+\frac{p^{2}-1}{12}c=0$. Then there is a singular vector
$u^{\prime}\in V(c,h,h_{W})_{h+p}\cap\mathcal{W}$ such that $\overline
{u^{\prime}}=W_{-p}v$. Moreover, $U(\mathcal{L})u^{\prime}$ is isomorphic to
the Verma module $V(c,h+p,h_{W})$.
\end{theorem}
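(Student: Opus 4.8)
The plan is to upgrade the singular vector $s$ from Lemma \ref{w}, which a priori only satisfies $\overline{s}=W_{-p}v$ \emph{or} $\overline{s}=L_{-p}v$, into a genuinely $\mathcal{W}$-valued singular vector $u'$ with $\overline{u'}=W_{-p}v$. The first step is to rule out (or rather, correct) the case $\overline{s}=L_{-p}v$. If $\overline{s}=L_{-p}v$, consider the homogeneous decomposition $s=\sum_{n\ge0}\overline{s_n}$ by $W$-degree, so $\overline{s_0}=L_{-p}v$ lies in $W$-degree $0$. Apply $W_p$ to $s$ and use Lemma \ref{key}a): since $\overline{s_0}\notin\mathcal{W}$ and $p$ is the smallest (indeed only) index with $L_{-p}$ a factor, the $W$-degree-$0$ part of $W_p s$ equals $p\bigl(2h_W+\tfrac{p^2-1}{12}c\bigr)\tfrac{\partial}{\partial L_{-p}}(L_{-p}v)=0$ by our standing hypothesis. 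So this does not immediately give a contradiction; instead the idea (following Billig) is that $W_p s$, being of strictly lower weight, must vanish since $s$ is singular, and by examining $W$-degrees one extracts from $s$ a modified vector whose leading term is purely in $\mathcal{W}$. Concretely: among all singular vectors in $V(c,h,h_W)_{h+p}$, pick one, call it $u'$, whose $\overline{u'}$ has the \emph{largest} $W$-degree; I will argue $\overline{u'}\in\mathcal{W}$, for otherwise Lemma \ref{key}a) applied to $W_n u'$ (with $n$ minimal such that $L_{-n}$ occurs in $\overline{u'}$) produces, after using $W_n u'=0$, a nonzero singular vector of higher $W$-degree leading term — a contradiction. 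Since the space of singular vectors in this weight space is at most two-dimensional (spanned in leading-term by $W_{-p}v$ and $L_{-p}v$) and the $W$-degree-$1$ candidate is $W_{-p}v$, normalizing gives $\overline{u'}=W_{-p}v$.

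The second and main part is to show $U(\mathcal{L})u'\cong V(c,h+p,h_W)$. Since $u'$ is singular of weight $(h+p,h_W)$ (the $W_0$-weight is unchanged because $[W_0,W_{-p}]=-pL_{-p}$ contributes only in higher filtration... one checks $W_0 u'=h_W u'$ using that $W_0$ acts on $\mathcal{W}$ by $h_W$ up to lower $W$-degree terms, combined with singularity), there is a surjective $\mathcal{L}$-module map $\varphi\colon V(c,h+p,h_W)\twoheadrightarrow U(\mathcal{L})u'$ sending the highest weight vector to $u'$. To see $\varphi$ is injective, I would compare graded dimensions, or better, exhibit a PBW-type spanning argument: the vectors $W_{-m_s}\cdots W_{-m_1}L_{-n_t}\cdots L_{-n_1}u'$ (monomials in $\mathcal{L}_-$ applied to $u'$) should be linearly independent in $V(c,h,h_W)$. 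This follows by a leading-term argument with respect to $W$-degree: by \eqref{w1}, applying a monomial of $L$-degree $a$ and $W$-degree $b$ in $\mathcal{L}_-$ to $u'$ (whose leading term $W_{-p}v$ has $W$-degree $1$) produces a vector whose lowest $W$-degree component is exactly that monomial applied to $W_{-p}v$ in $V(c,h,h_W)$ — and these are distinct PBW basis vectors of $V(c,h,h_W)$ (they all contain the factor $W_{-p}$, and the map $\mu\mapsto \mu\cdot W_{-p}v$ on PBW monomials $\mu$ not already using an excess $W_{-p}$... here one must be slightly careful with the ordering, but $W$'s commute so $W_{-p}$ can be absorbed into the $W$-part). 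Hence the monomials applied to $u'$ are independent, so $\ker\varphi=0$.

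The step I expect to be the main obstacle is the inductive/extremal argument in the first part: showing that a leading-term-in-$\mathcal{W}$ singular vector exists, i.e., controlling how Lemma \ref{key}a) interacts with the constraint $W_n u'=0$ to force the leading $W$-degree to be maximal and equal to $1$. One has to handle the interplay between the two possible leading terms $W_{-p}v$ and $L_{-p}v$ from Lemma \ref{w}, track that applying $W_p$ to a vector with leading term $L_{-p}v$ annihilates the leading part (since $2h_W+\tfrac{p^2-1}{12}c=0$) so that no contradiction arises there and one genuinely must produce $u'$ by a correction procedure rather than by pure elimination. A secondary subtlety is verifying $W_0 u' = h_W u'$ exactly (not just modulo lower terms), which uses that $u'$ is already known to be $L_0$-homogeneous and killed by all $\mathcal{L}_+$, together with the fact that the only singular vectors of that weight are accounted for. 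Once these are in hand, the isomorphism with $V(c,h+p,h_W)$ is the formal PBW/leading-term argument sketched above.
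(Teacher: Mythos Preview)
Your extremal argument in the first part does not close. Picking a singular vector $u'$ with $\overline{u'}$ of maximal $W$-degree and assuming $\overline{u'}\notin\mathcal{W}$, Lemma~\ref{key}a) together with $W_nu'=0$ forces the smallest $L$-index in $\overline{u'}$ to equal $p$; but since the total weight is $p$ this gives $\overline{u'}=L_{-p}v$, of $W$-degree $0$, and there is no contradiction --- nothing prevents \emph{every} singular vector at level $p$ from having leading $W$-degree $0$. You never explain how the equation $W_nu'=0$ would ``produce a nonzero singular vector of higher $W$-degree leading term.'' The paper resolves this with a one-line construction you are missing: if $\overline{s}=L_{-p}v$, set $u':=\tfrac{1}{p}(W_0 s-h_W s)$. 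Since $[W_0,L_n]=-nW_n$ and $[W_0,W_n]=0$, the vector $u'$ is again singular, and $(W_0-h_W)L_{-p}v=[W_0,L_{-p}]v=pW_{-p}v$ gives $\overline{u'}=W_{-p}v$. (Your bracket $[W_0,W_{-p}]=-pL_{-p}$ is wrong: the $W$'s commute.)

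There is a second gap: you only argue for $\overline{u'}\in\mathcal{W}$, whereas the theorem requires $u'\in\mathcal{W}$. The paper obtains this by induction along the decomposition $u'=\sum_{n\ge0}\overline{u'_n}$: if $\overline{u'_i}\in\mathcal{W}$ for $i<n$ but $\overline{u'_n}\notin\mathcal{W}$, take $m$ least with $L_{-m}$ in $\overline{u'_n}$; then $m<p$ (weight count, as $\deg_W\overline{u'_n}\ge2$), so Lemma~\ref{key}a) makes the $W$-degree-$k$ part of $W_m u'_n$ nonzero, while $W_m u'_n=W_mu'-\sum_{i<n}W_m\overline{u'_i}=0$ (positive $W_m$ annihilate $\mathcal{W}$), a contradiction. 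Once $u'\in\mathcal{W}$ is established, your ``secondary subtlety'' evaporates: $[W_0,W_{-j}]=0$ gives $W_0u'=h_Wu'$ exactly, and the Verma isomorphism follows from freeness of $V(c,h,h_W)$ over $U(\mathcal{L}_-)$. Your PBW leading-term argument for injectivity is correct but more than is needed.
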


\begin{proof}
We know from Lemma \ref{w}, that there exists a singular vector $s$ in
$V(c,h,h_{W})_{h+p}$. If $\overline{s}=W_{-p}v$ we set $u^{\prime}=s$. Suppose
that $\overline{s}=L_{-p}v$. Then we set $u^{\prime}=\frac{1}{p} \left(
W_{0}s-h_{W}s \right)  $. Obviously $u^{\prime}$ is a singular vector such
that $\overline{u^{\prime}}=W_{-p}v$. Factor $\frac{1}{p}$ is due to
normalization by $W_{-p}$. Note that this means there are two linearly
independent singular vectors in $V(c,h,h_{W})_{h+p}$.

Next we show that $u^{\prime}\in\mathcal{W}$. Let $n\in\mathbb{N}$ the
smallest such that $\deg_{L}\overline{u_{n}^{\prime}}>0$ and let $\deg
_{W}\overline{u_{n}^{\prime}}=k$. Moreover, let $m\in\mathbb{N}$ the smallest
such that $L_{-m}$ occurs as a factor in $\overline{u_{n}^{\prime}}$.
Obviously, $m<p$. Then by Lemma \ref{key} a) the part of $W_{m}u_{n}^{\prime}
$ of the $W$-degree $k$ is nonzero. However, $W_{m}u_{n}^{\prime}%
=W_{m}u^{\prime}-W_{m}\overline{u_{n-1}^{\prime}}=0$ because $u^{\prime}$ is a
singular vector, and $\overline{u_{n-1}}\in\mathcal{W}$ by minimality of $n$,
so we got a contradiction and conclude that $\deg_{L}u^{\prime}=0$.

Finally, since $u^{\prime}\in\mathcal{W}$ we have $W_{0}u^{\prime}%
=h_{W}u^{\prime}$ so $U\left(  \mathcal{L}\right)  u$ is isomorphic to the
Verma module with the highest weight $(c,h+p,h_{W})$.
\end{proof}

\begin{example}
$u^{\prime}=W_{-1}v$ is a singular vector in the Verma module $V(c,h,0)$;

$u^{\prime}=(W_{-2}-\frac{3}{4h_{W}}W_{-1}^{2})v$ is a singular vector in the
Verma module $V(c,h,-c/8)$;

$u^{\prime}=(W_{-3}-\frac{2}{h_{W}}W_{-2}W_{-1}+\frac{1}{h_{W}^{2}}W_{-1}%
^{3})v$ is a singular vector in the Verma module $V(c,h,-c/3)$.
\end{example}

\bigskip

\textbf{From now on }$u^{\prime}$\textbf{\ denotes the singular vector from
Theorem \ref{u'}.} We also use the following notation:
\begin{align*}
J^{\prime}(c,h,h_{W})  &  :=U\left(  \mathcal{L}\right)  u^{\prime},\\
L^{\prime}(c,h,h_{W})  &  =V(c,h,h_{W})/J^{\prime}(c,h,h_{W}).
\end{align*}

Let $P_{2}\left(  n\right)  =\sum_{i=0}^{n}P(n-i)P(i)$ where $P$ is a
partition function with $P(0)=1$. Then
\[
\operatorname*{char}V(c,h,h_{W})=q^{h}\sum_{n\geq0}P_{2}(n)q^{n}=q^{h}%
\prod\limits_{k\geq1}(1-q^{k})^{-2}.
\]
From Theorem \ref{u'} we get
\begin{align*}
\operatorname*{char}J^{\prime}(c,h,h_{W})  &  =q^{h+p}\sum_{n\geq0}%
P_{2}(n)q^{n}=q^{h+p}\prod\limits_{k\geq1}(1-q^{k})^{-2},\\
\operatorname*{char}L^{\prime}(c,h,h_{W})  &  =\operatorname*{char}%
V-\operatorname*{char}J^{\prime}=q^{h}(1-q^{p})\sum_{n\geq0}P_{2}(n)q^{n}=\\
&  =q^{h}(1-q^{p})\prod\limits_{k\geq1}(1-q^{k})^{-2}.
\end{align*}

\begin{lemma}
\label{nj}Let $0\neq x\in J^{\prime}(c,h,h_{W})$. Then there exist terms in
$\overline{x}$ containing factor $W_{-p}$.
\end{lemma}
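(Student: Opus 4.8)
The plan is to exploit the fact that $J'(c,h,h_W)=U(\mathcal{L})u'$ is generated by the singular vector $u'$ with $\overline{u'}=W_{-p}v$, and to track what happens to the $W$-degree leading term when one applies PBW monomials in $\mathcal{L}_-$. Since $u'$ is singular, $\mathcal{L}_+$ kills it, so every element of $J'(c,h,h_W)$ is a linear combination of vectors of the form $W_{-m_s}\cdots W_{-m_1}L_{-n_t}\cdots L_{-n_1}u'$. First I would show that for any such monomial $X\in U(\mathcal{L}_-)$, the lowest $W$-degree component $\overline{Xu'}$ is obtained by acting with $X$ on $\overline{u'}=W_{-p}v$ and keeping only the part of lowest $W$-degree; the point is that the $L_{-n}$'s do not raise $W$-degree and the $W_{-m}$'s raise it by exactly one, so $\overline{Xu'}$ has the form (monomial in $L_{-n}$'s) times $W_{-p}v$ in the PBW ordering, hence genuinely contains $W_{-p}$ as a factor. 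This gives the claim for a single monomial applied to $u'$.

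The main obstacle is cancellation: a general $x\in J'(c,h,h_W)$ is a sum $\sum_i c_i X_i u'$, and a priori the leading ($W$-degree minimal) terms of different summands could cancel, destroying the "$W_{-p}$ divides every term of $\overline{x}$" property. To handle this I would argue degree by degree in $W$-degree. Let $d=\deg_W \overline{x}$. Collect from the expansion $x=\sum_i c_i X_i u'$ exactly the contributions of $W$-degree $d$; by the previous paragraph each $X_i u'$ contributes to $W$-degree $\geq (\text{number of }W\text{'s in }X_i)+1$, and its degree-$((\#W_i)+1)$ part is $(\text{$L$-part of }X_i)\cdot W_{-m_s^{(i)}}\cdots W_{-m_1^{(i)}}W_{-p}v$, reordered by PBW. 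So the only way to get $W$-degree $d$ is from the $X_i$ with $\#W_i\le d-1$, and the pieces with $\#W_i=d-1$ contribute their top part which is automatically a sum of PBW monomials each containing $W_{-p}$ (after PBW reordering, $W_{-p}$ either stays as a factor or gets merged with an equal index $W_{-p}$ — in all cases a factor $W_{-p}$ remains). The $X_i$ with $\#W_i\le d-2$ contribute via their \emph{higher} $W$-degree parts, i.e. via the non-leading terms of $X_iu'$ where extra $W$'s have been created by commutators; here too I need that those correction terms retain a $W_{-p}$ factor, which follows by tracking that the correction comes from commuting some $L_m$ past $W_{-p}v$, and $[L_m,W_{-p}]=(m+p)W_{m-p}$, together with the recursive structure — but one must be careful that this recursion indeed always leaves behind (or regenerates) a $W_{-p}$.

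Concretely, I expect the cleanest route is induction on $W$-degree $d$ combined with the characterization of $L'(c,h,h_W)$ from Theorem \ref{glavni}: the images of the PBW monomials with \emph{no} $W_{-p}$ factor form a basis of $L'(c,h,h_W)=V/J'$. Hence, writing $x\in J'$ in the standard PBW basis of $V$ and splitting each basis monomial according to whether $W_{-p}$ occurs, the part $x^{\circ}$ of $x$ with no $W_{-p}$ factor must map to $0$ in $L'(c,h,h_W)$, so $x^{\circ}\in J'$; but $x^{\circ}$ is a sum of basis monomials whose images in $L'$ are linearly independent, forcing $x^{\circ}=0$. Therefore every PBW monomial appearing in $x$ contains $W_{-p}$, which is exactly the assertion for $\overline{x}$ (and in fact for all of $x$). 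The hard part is making the first paragraph's leading-term computation airtight enough to justify that $x^{\circ}\mapsto 0$ in $L'$, i.e. that the spanning set "$W_{-p}$-divisible PBW monomials" indeed spans $J'$; but this is precisely what the PBW basis statement for $L'(c,h,h_W)$ in Theorem \ref{glavni} delivers, so I would simply invoke it.
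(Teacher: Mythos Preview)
Your proposal has two serious problems.

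\textbf{Circularity.} Your final route invokes the PBW basis $B'$ for $L'(c,h,h_W)$ from Theorem~\ref{glavni} (equivalently Lemma~\ref{baza}). But in the paper that basis is established \emph{using} Lemma~\ref{nj}: the linear independence of $B'$ is deduced precisely from the fact that any nonzero element of $J'$ has a $W_{-p}$ term in its lowest $W$-degree component. So you cannot cite Theorem~\ref{glavni} here.

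\textbf{A logical gap and a false conclusion.} Even granting the basis $B'$, your splitting argument does not work. From $x\in J'$ you know that $x$ maps to $0$ in $L'$, but this does \emph{not} force the $W_{-p}$-free part $x^\circ$ to map to $0$: the $W_{-p}$-containing part $x-x^\circ$ maps to something nonzero in $L'$ (rewritten in $B'$), and that may well cancel the image of $x^\circ$. In fact your final claim, that every PBW monomial appearing in $x$ contains $W_{-p}$, is simply false: already for $p=2$ one has $u'=(W_{-2}-\tfrac{3}{4h_W}W_{-1}^2)v\in J'$, and the monomial $W_{-1}^2v$ has no $W_{-2}$. The lemma only asserts that \emph{some} term of $\overline{x}$ contains $W_{-p}$, not all terms of $x$.

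The paper's argument is much shorter and avoids all of this. Since $u'$ is singular with $W_0u'=h_Wu'$, the submodule $J'(c,h,h_W)$ equals $U(\mathcal{L}_-)u'$, so $x=yu'$ for some $y\in U(\mathcal{L}_-)$. The $W$-degree is an honest algebra grading on $U(\mathcal{L}_-)$ (check on brackets: $[L,L]\subset L$, $[L,W]\subset W$, $[W,W]=0$), and $U(\mathcal{L}_-)$ is a domain, so $\overline{x}=\overline{y}\cdot\overline{u'}=\overline{y}\,W_{-p}v\neq 0$. When you PBW-reorder $\overline{y}\,W_{-p}v$, the terms of maximal length are exactly the maximal-length PBW monomials of $\overline{y}$ with $W_{-p}$ inserted; these do not cancel (distinct monomials stay distinct after inserting $W_{-p}$) and each visibly contains $W_{-p}$. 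That is the whole proof. Your first paragraph was heading in this direction; the missing ingredient that resolves your cancellation worry in paragraph two is precisely that $W$-degree is multiplicative on $U(\mathcal{L}_-)$ and that $U(\mathcal{L}_-)$ has no zero divisors, so $\overline{yu'}=\overline{y}\cdot\overline{u'}$ already at the level of a general $y$, with no need to argue monomial by monomial.
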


\begin{proof}
We may write $x=yu^{\prime}$, where $y\in U(\mathcal{L}_{-})$. Since
$U(\mathcal{L}_{-})$ has no zero divisors, we get $\overline{x}=\overline
{y}\overline{u^{\prime}}$. However, $\overline{u^{\prime}}=W_{-p}v$ so every
component in $\overline{x}$ with maximal length contains $W_{-p}$.
\end{proof}

\begin{lemma}
\label{baza}Denote by $B^{\prime}$ set of PBW vectors $W_{-m_{s}}\cdots
W_{-m_{1}}L_{-n_{t}}\cdots L_{-n_{1}}v$ modulo $J^{\prime}(c,h,h_{W})$, such
that $m_{i}\neq p$ for $i=1,\ldots,s$. Then $B^{\prime}$ is a basis for
$L^{\prime}(c,h,h_{W})$.
\end{lemma}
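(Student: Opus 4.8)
The plan is to show that $B'$ is both spanning and linearly independent in $L'(c,h,h_W)$. For spanning, I would start from the fact that every PBW vector in $V(c,h,h_W)$ that does contain some factor $W_{-p}$ can, modulo $J'(c,h,h_W)$, be rewritten in terms of vectors that do not. The key tool is the singular vector $u'$ from Theorem \ref{u'}: since $\overline{u'}=W_{-p}v$ and $u'-W_{-p}v$ has strictly higher $W$-degree, multiplying $u'$ on the left by an arbitrary PBW monomial $y\in U(\mathcal{L}_-)$ produces an element of $J'$ whose lowest $W$-degree term is $y W_{-p}v$ (in PBW form, after reordering). This lets us express any monomial containing $W_{-p}$ as a $\mathbb{C}$-linear combination of monomials of strictly greater $W$-degree, modulo $J'$. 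Iterating and using that $W$-degree is bounded above within a fixed weight space (it cannot exceed the total degree $k$), this process terminates, so $B'$ spans $L'(c,h,h_W)$.

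For linear independence, the clean route is a character/dimension count. By Theorem \ref{u'} we have $\operatorname{char} L'(c,h,h_W) = q^h(1-q^p)\prod_{k\geq 1}(1-q^k)^{-2}$, which is exactly the generating function counting PBW monomials $W_{-m_s}\cdots W_{-m_1}L_{-n_t}\cdots L_{-n_1}v$ in which no $m_i$ equals $p$: removing the index $p$ from the available $W$-parts replaces one factor $\prod_{k\geq 1}(1-q^k)^{-1}$ by $\prod_{k\neq p}(1-q^k)^{-1} = (1-q^p)\prod_{k\geq 1}(1-q^k)^{-1}$, giving the same series. Hence in each weight space $L'(c,h,h_W)_{h+k}$ the number of elements of $B'$ of that weight equals $\dim L'(c,h,h_W)_{h+k}$, which is finite. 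Combined with the spanning property just established, this forces $B'$ to be a basis.

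I would organize the write-up as: (1) state that $B'$ spans, deferring the reduction argument to a short paragraph that invokes $\overline{u'}=W_{-p}v$ together with $W$-degree induction within a weight space; (2) invoke the character computation displayed just before the lemma to conclude that the number of PBW monomials avoiding $W_{-p}$ in weight $h+k$ equals $\dim L'(c,h,h_W)_{h+k}$; (3) conclude by the standard fact that a finite spanning set of size equal to the dimension is a basis.

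The main obstacle is making the spanning reduction precise: one must check that when $y W_{-p}v$ is put into PBW order the ``lower $W$-degree correction terms'' coming from commutators involving $C$, $L_0$, $W_0$ do not reintroduce $W_{-p}$ factors in an uncontrolled way, and that the induction is genuinely on a well-founded quantity. The right bookkeeping is a double induction: outer induction on $\deg_W$ (downward, bounded by the weight-space degree $k$) and, for fixed $W$-degree, on the number of $W_{-p}$ factors appearing, using $y u'$ with $y$ ranging over monomials in $\mathcal{L}_-$ chosen to match the top monomial we wish to eliminate. Everything else — the character identity and the ``spanning set of the right size is a basis'' step — is routine.
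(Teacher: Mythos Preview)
Your argument is correct, but it inverts the paper's logic. The paper proves linear independence directly and obtains spanning from the character count, whereas you prove spanning directly and obtain linear independence from the character count. Concretely, the paper invokes Lemma~\ref{nj}: any nonzero element of $J'(c,h,h_W)$ must have $W_{-p}$ appearing in some term of its lowest $W$-degree component, so a linear combination of vectors from $B'$ (none of which contain $W_{-p}$) can lie in $J'$ only if it is zero. That gives linear independence in one line; the character identity then forces $B'$ to span. Your route---eliminating $W_{-p}$ factors via $yu'\in J'$ and $\overline{u'}=W_{-p}v$, with a $W$-degree induction inside each weight space---is perfectly valid, and you have correctly identified the main bookkeeping issue (controlling PBW reordering and ensuring the induction terminates). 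But it is more work than necessary: once Lemma~\ref{nj} is in hand, the paper's approach bypasses the reduction argument entirely. If you keep your version, you can simplify step~(3) by noting that a linearly independent set of the right size in each finite-dimensional weight space is a basis, which is exactly the dual of what you wrote; either direction of the dimension argument is fine, but proving independence via Lemma~\ref{nj} is the shorter path here.
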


\begin{proof}
Since $W_{-p}$ cannot occur in a linear combination of vectors from
$B^{\prime}$ it follows from Lemma \ref{nj} that $B^{\prime}$ is linearly
independent. Simple combinatorics shows that a character of a vector space
spanned by $B^{\prime}$ equals $q^{h}(1-q^{p})\sum_{n\geq0}P_{2}(n)q^{n}$,
which proves $B^{\prime}$ is a basis for $L^{\prime}(c,h,h_{W})$.
\end{proof}

\bigskip If there is a singular vector $s\in V(c,h,h_{W})$ such that
$\overline{s}=L_{-p}v$ (recall Proposition \ref{w}), then $L^{\prime
}(c,h,h_{W})$ is reducible and the image of $s$ is a singular vector in
$L^{\prime}(c,h,h_{W})$. Next we show that $L^{\prime}(c,h,h_{W})$ may contain
a singular vector of weight $h+pr$ for any $r\in\mathbb{N}$, i.e.,
$V(c,h,h_{W})$ may contain a subsingular vector. The singular vector $s$ in
Lemma \ref{w} is just a special case $r=1$.

\begin{theorem}
\label{w irr}If $L^{\prime}(c,h,h_{W})$ is reducible then there is a singular
vector $u\in L^{\prime}(c,h,h_{W})$ such that $\overline{u}=L_{-p}^{r}v$ for
some $r\in\mathbb{N}$.
\end{theorem}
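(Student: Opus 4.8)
The plan is to analyze a nonzero singular vector of minimal weight in the reducible module $L'(c,h,h_W)$ and show its lowest $W$-degree component must be a power of $L_{-p}$. Since $L'(c,h,h_W)$ is reducible, it contains a proper nonzero submodule, and hence (being a weight module with finite-dimensional weight spaces and bounded from below) a singular vector $u \in L'(c,h,h_W)$, of some weight $h+k$. Lift $u$ to a vector in $V(c,h,h_W)$, still denoted $u$, and consider $\overline{u}$, its lowest nonzero $W$-degree component. By Lemma \ref{baza}, we may and do take the lift so that $W_{-p}$ does not occur in any term of $u$ (i.e., $u$ is written in the basis $B'$); in particular $\overline{u}$ is $W_{-p}$-free. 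The central task is to prove $\overline{u} \in \mathbb{C}L_{-p}^r v$ for some $r \geq 1$.

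First I would show $\deg_W \overline{u} = 0$, i.e., $\overline{u} \in \mathbb{C}[L_{-1},L_{-2},\dots]v$ involves no $W_{-n}$ at all. Suppose not; then $\overline{u} \notin \mathcal{W}$ is impossible to conclude directly, so instead: if $\deg_W \overline{u} = k \geq 1$, pick the maximal $m$ with $W_{-m}$ occurring in $\overline{u}$ — here one must be careful, because $\overline{u}$ need not lie in $\mathcal{W}$. The cleaner route is to use Lemma \ref{key} b) applied not to $\overline{u}$ but after first killing the $L$-part: consider instead that $u$ is singular, so $L_n u \equiv 0$ and $W_n u \equiv 0$ in $L'$ for all $n > 0$, meaning $L_n u, W_n u \in J'(c,h,h_W)$, hence by Lemma \ref{nj} every term of lowest length in $\overline{L_n u}$ and $\overline{W_n u}$ contains $W_{-p}$. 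Combining this with the degree shifts (\ref{w1}) and the explicit formulas of Lemma \ref{key} applied to $\overline{u}$, I would extract: if $\overline{u}$ had any $W_{-m}$ factor with $m \neq p$, then acting by $L_m$ produces (via Lemma \ref{key} b), after reducing to the $\mathcal{W}$-part if necessary by a secondary induction on $\deg_L$) a nonzero $W_{-p}$-free term of the appropriate $W$-degree, contradicting that $L_m u \in J'$. This forces $\deg_W \overline{u} = 0$, so $\overline{u} \in \mathbb{C}[L_{-n}]v$.

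Next, with $\overline{u} = P(L_{-1}, L_{-2}, \dots)v$ a nonzero polynomial in the $L_{-n}$'s, I would show $P$ is a monomial in $L_{-p}$ alone. Let $n$ be the smallest index such that $L_{-n}$ occurs in $\overline{u}$. By Lemma \ref{key} a), the $W$-degree-$0$ part of $W_n u$ is $n(2h_W + \frac{n^2-1}{12}c)\frac{\partial \overline{u}}{\partial L_{-n}}$. Since $u$ is singular, $W_n u \in J'(c,h,h_W)$, so by Lemma \ref{nj} its lowest-length terms all contain $W_{-p}$; but the expression above is $W_{-p}$-free and of $W$-degree $0$, hence it must vanish. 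This gives either $n = p$ (the coefficient $2h_W + \frac{n^2-1}{12}c = 0$, which by hypothesis happens exactly when $n = p$) or $\frac{\partial \overline{u}}{\partial L_{-n}} = 0$. The latter contradicts minimality/occurrence of $L_{-n}$, so $n = p$: the smallest — hence by a repeat of the argument on $\frac{\partial \overline{u}}{\partial L_{-p}}$, the only — index occurring is $p$. Therefore $\overline{u} = c_r L_{-p}^r v$ for some $r \in \mathbb{N}$ and scalar $c_r \neq 0$; rescaling gives $\overline{u} = L_{-p}^r v$. Finally $r \geq 1$ because $u$ is not a highest weight vector (it has positive weight, as $L'$ is a proper quotient with its own highest weight line spanned by $v$).

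The main obstacle I anticipate is the step showing $\deg_W \overline{u} = 0$: Lemma \ref{key} b) requires $\overline{x} \in \mathcal{W}$, but the lowest $W$-degree component $\overline{u}$ of a general singular vector need not be purely in the $W$-variables — it can carry $L_{-n}$ factors too. Handling this cleanly requires an auxiliary induction (on $L$-degree within the fixed $W$-degree, mirroring the argument in the proof of Theorem \ref{u'} where $\deg_L u' = 0$ was established), using that $W_m$ acting on an $L$-containing term of minimal such degree produces a detectable nonzero contribution by Lemma \ref{key} a). Marrying the two gradings carefully — tracking simultaneously what $W_m$ and $L_m$ do to $\overline{u}$ and invoking Lemma \ref{nj} each time to force vanishing of $W_{-p}$-free pieces — is where the real work lies; the rest is the same partial-derivative-vanishing mechanism already used for $u'$.
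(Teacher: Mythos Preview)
Your overall strategy --- lift the singular vector to the basis $B'$, then use Lemmas \ref{key} and \ref{nj} to constrain $\overline{u}$ --- is exactly the paper's. However, the \emph{order} in which you attack the two halves creates the very difficulty you flag at the end. The paper proceeds in the opposite order: it first disposes of the case $\overline{u}\in\mathcal{W}$ (there Lemma \ref{key}~b) applies directly and yields a contradiction, since the maximal $W$-index $m$ in $\overline{u}$ is $\neq p$ by the choice of $B'$); this forces $\overline{u}\notin\mathcal{W}$, so Lemma \ref{key}~a) applies and the smallest $L$-index must be $p$. Only \emph{after} the $L$-part of $\overline{u}$ has been pinned down to consist solely of $L_{-p}$'s does the paper establish $\deg_W\overline{u}=0$, and at that point the computation of the $W$-degree $(k-1)$ part of $L_m u$ becomes tractable precisely because the $L$-factors are so restricted. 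Doing $\deg_W\overline{u}=0$ first, as you propose, means confronting a variant of Lemma \ref{key}~b) with $\overline{u}$ possibly carrying arbitrary $L$-factors, and the auxiliary induction you sketch does not obviously close.

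There is also a genuine gap in your second step. You write that ``by a repeat of the argument on $\frac{\partial\overline{u}}{\partial L_{-p}}$, the only index occurring is $p$'' --- but Lemma \ref{key}~a) is formulated only for the \emph{smallest} occurring $L$-index, and $\partial\overline{u}/\partial L_{-p}$ has no direct relation to the singularity of $u$. Once the smallest index is shown to be $p$, ruling out any further index $n'>p$ requires a separate direct computation of the $W$-degree-$k$ part of $W_{n'}u$: one checks that commuting $W_{n'}$ past the $L_{-p}$ factors to the right produces only positive-index $W$'s (which annihilate or raise $W$-degree), so that the sole contribution at $W$-degree $k$ comes from $[W_{n'},L_{-n'}]$, giving the nonzero $W_{-p}$-free term $n'\bigl(2h_W+\tfrac{n'^2-1}{12}c\bigr)\frac{\partial\overline{u}}{\partial L_{-n'}}$ and hence a contradiction with Lemma \ref{nj}. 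The paper carries this out explicitly; simply ``repeating'' Lemma \ref{key}~a) does not suffice.
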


\begin{proof}
If the highest weight module $L^{\prime}(c,h,h_{W})$ is reducible, it must
contain some singular vector not in $\mathbb{C}v$. Let $u\in L^{\prime
}(c,h,h_{W})\setminus\mathbb{C}v$ be a homogeneous vector such that
$\mathcal{L}_{+}u=0$ (i.e., $\mathcal{L}_{+}u\in J^{\prime}(c,h,h_{W})$ if we
consider $u $ as a vector in $V(c,h,h_{W})$). We may assume that $u$ is a
linear combination of vectors from $B^{\prime}$. Note that $B^{\prime}$ is
closed under partial derivations $\frac{\partial}{\partial L_{-n}}$ and
$\frac{\partial}{\partial W_{-n}}$.

Suppose $\overline{u}\in\mathcal{W}$ and $\overline{u}\notin\mathbb{C}v$.
Applying Lemma \ref{key} b) we find $m\in\mathbb{N}$, $m\neq p$, such that
$\overline{L_{m}u}\neq0$ and $W_{-p}$ does not occur in $\overline{L_{m}u}$.
By Lemma \ref{nj}, we get $L_{m}u\notin J^{\prime}(c,h,h_{W})$, a contradiction.

Therefore we may assume $\overline{u}\notin\mathcal{W}$. Let $\deg
_{W}\overline{u}=k$ and let $n$ the smallest such that $L_{-n}$ occurs in
$\overline{u}$. By Lemma \ref{key} a), component of $W_{n}u$ of the $W$-degree
$k$ is $n\left(  2h_{W}+\frac{n^{2}-1}{12}c\right)  \frac{\partial\overline
{u}}{\partial L_{-n}}$. If $n\neq p$ we get contradiction again so $n=p$. Let
$n^{\prime}>p$ the smallest such that $L_{-n^{\prime}}$ occurs as a factor in
$\overline{u}$. Since
\[
W_{n^{\prime}}u=\sum_{i=1}^{t}W_{-m_{k}}\cdots W_{-m_{1}}L_{-n_{t}}%
\cdots\left[  W_{n^{\prime}},L_{-n_{i}}\right]  \cdots L_{-n_{1}}v
\]
and $n_{1}=p$ or $n_{1}\geq n^{\prime}$ the only component of $\overline{u}$
that produces part of $W$-degree $k$ is one in which $L_{-n^{\prime}}$ occurs.
That component equals $n^{\prime}\left(  2h_{W}+\frac{n^{\prime2}-1}%
{12}c\right)  \frac{\partial\overline{u}}{\partial L_{-n^{\prime}}}$. Since
$n^{\prime}\neq p$ and since $W_{-p}$ does not occur as a factor in
$\overline{u}$, we get $0\neq W_{n^{\prime}}u\in J^{\prime}(c,h,h_{W})$ such
that $W_{-p}$ does not occur in $\overline{W_{n^{\prime}}u}$. Again, this is a
contradiction with Lemma \ref{nj}.

So far we have proven that if $L_{-t}$ occurs as a factor in $\overline{u}$,
then $t=p$. It is left to prove that $\overline{u}\in\mathbb{C}L_{-p}^{r}v$,
i.e., that $\deg_{W}\overline{u}=0$. Suppose that $\deg_{W}\overline{u}=k$,
and let $m\in\mathbb{N}$ the highest such that $W_{-m}$ occurs as a factor in
$\overline{u}$. It is easy to see that part of $L_{m}u$ of $W$-degree $k-1 $
equals $m\left(  2h_{W}+\frac{m^{2}-1}{12}c\right)  \frac{\partial\overline
{u}}{\partial W_{-m}}$. Since by Lemma \ref{nj} $m\neq p$, we conclude that
$0\neq L_{m}u\in J^{\prime}(c,h,h_{W})$ such that $W_{-p}$ does not occur as a
factor in $\overline{L_{m}u}$, a contradiction. This proves that $\overline
{u}$ equals $L_{-p}^{r}v$ up to a scalar factor, so we set $\overline
{u}=L_{-p}^{r}v$.
\end{proof}

Let us show that such vectors actually exist.

\begin{example}
$u=L_{-1}v$ is a subsingular vector in the Verma module $V(c,0,0)$;

$u=(L_{-1}^{2}+\frac{6}{c}W_{-2})v$ is a subsingular vector in the Verma
module $V(c,-\frac{1}{2},0)$.
\end{example}

\begin{remark}
\label{uni}If it exists, a singular vector $u$ in $L^{\prime}(c,h,h_{W}%
)_{h+rp}$ is obviously unique up to a scalar factor. When considered as a
vector in $V(c,h,h_{W})_{h+rp}$, we usually take the one representative of
$u+U(\mathcal{L})u^{\prime}$ in which $W_{-p}$ does not occur. Therefore we
may think of $u$ as a unique subsingular vector in the Verma module
$V(c,h,h_{W})$ such that $\overline{u}=L_{-p}^{r}v$.
\end{remark}

\bigskip

Our next goal is to find a necessary condition for the existence of a
subsingular vector. We still assume that $2h_{W}+\frac{p^{2}-1}{12}c=0$.
Suppose that a subsingular vector $u$ exists in $V(c,h,h_{W})$, such that
$\overline{u}=L_{-p}^{r}v$. Consider $L_{p}u\in J^{\prime}(c,h,h_{W})$.
Obviously, a component
\[
L_{p}L_{-p}^{r}v=\left(  n\frac{p^{3}-p}{12}c+2p\sum_{i=1}^{r-1}(h+ip)\right)
L_{-p}^{r-1}v=rp(2h-2h_{W}+(r-1)p)L_{-p}^{r-1}v
\]
occurs in $L_{p}u$. Since $L_{p}u\in J^{\prime}(c,h,h_{W})$, by Lemma \ref{nj}
the coefficient with $L_{-p}^{r-1}v$ in $L_{p}u$ has to be zero. The only
components of $u$ that contribute to $L_{-p}^{r-1}v$ in $L_{p}u$ are
$\lambda_{i}W_{-i}L_{-p}^{r-1}L_{-(p-i)}v$ for $i=1,\ldots,p-1$, and their
contribution is
\[
\lambda_{i}(p+i)(p-i)\left(  2h_{W}+\frac{(p-i)^{2}-1}{12}c\right)
L_{-p}^{r-1}v=2\lambda_{i}h_{W}i(2p-i)\frac{p^{2}-i^{2}}{p^{2}-1}L_{-p}%
^{r-1}v.
\]
It is left to find $\lambda_{i}$. Consider $L_{i}u$ for $i=1,\ldots,p-1$.
Vector $\lambda_{i}L_{i}W_{-i}L_{-p}^{r-1}L_{-(p-i)}v$ produces a component
\[
2\lambda_{i}h_{W}i\frac{p^{2}-i^{2}}{p^{2}-1}L_{-p}^{r-1}L_{-(p-i)}v.
\]
The only other component contributing to $L_{-p}^{r-1}L_{-(p-i)}v$ in $L_{i}u
$ is $L_{-p}^{r}v$, with a coefficient $n(p+i)L_{-p}^{r-1}L_{-(p-i)}v$. Since
$L_{-p}^{r-1}L_{-(p-i)}v$ can not occur in a vector from $J^{\prime}%
(c,h,h_{W})$, the associated coefficient needs to be zero, which gives
$\lambda_{i}=-r\frac{p^{2}-1}{2h_{w}i(p-i)}$. Now we have a formula for the
coefficient of $L_{-p}^{r-1}$:
\[
rp(2h-2h_{W}+(r-1)p)-2rh_{W}\sum_{i=1}^{p-1}i(2p-i)\frac{p^{2}-i^{2}}{p^{2}%
-1}\frac{p^{2}-1}{2h_{w}i(p-i)}=0
\]
leading to
\[
h=h_{W}+\frac{(13p+1)(p-1)}{12}+\frac{(1-r)p}{2}.
\]
This proves the following

\begin{theorem}
\label{dov}Let $2h_{W}+\frac{p^{2}-1}{12}c=0$. If $V(c,h,h_{W})$ contains a
subsingular vector $u$ such that $\overline{u}=L_{-p}^{r}v$ for some
$r\in\mathbb{N}$ then $h=h_{W}+\frac{(13p+1)(p-1)}{12}+\frac{(1-r)p}{2}$.
\end{theorem}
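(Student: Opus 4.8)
The plan is to extract the coefficient of $L_{-p}^{r-1}v$ in $L_p u$ and force it to vanish, since $L_p u \in J'(c,h,h_W)$ and Lemma~\ref{nj} says every nonzero element of $J'$ has a term containing $W_{-p}$, whereas $L_{-p}^{r-1}v$ does not. First I would write $u$ as a linear combination of PBW vectors from $B'$ (no $W_{-p}$ factors) with leading term $\overline{u}=L_{-p}^r v$. Applying $L_p$, the term $L_p L_{-p}^r v$ contributes $rp(2h-2h_W+(r-1)p)L_{-p}^{r-1}v$ by a direct computation using \eqref{1} and $L_0 v = hv$, $W_0 v = h_W v$ together with the relation $2h_W + \frac{p^2-1}{12}c = 0$ (which converts the central term $\frac{p^3-p}{12}c$ into $-2p h_W$). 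The only other PBW vectors in $u$ that can feed into $L_{-p}^{r-1}v$ under $L_p$ are those of the form $\lambda_i W_{-i}L_{-p}^{r-1}L_{-(p-i)}v$ for $i=1,\dots,p-1$, because $[L_p,W_{-i}]$ is proportional to $W_{p-i}$ and then $W$ must be killed, which forces the companion $L$-factor to be $L_{-(p-i)}$; their contribution works out to $2\lambda_i h_W\, i(2p-i)\frac{p^2-i^2}{p^2-1}L_{-p}^{r-1}v$.

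Next I would pin down the coefficients $\lambda_i$ by a second application, namely $L_i u$ for each $i=1,\dots,p-1$, and reading off the coefficient of the PBW vector $L_{-p}^{r-1}L_{-(p-i)}v$. This coefficient again cannot survive in $J'$ by Lemma~\ref{nj}, so it must be zero. The term $\lambda_i W_{-i}L_{-p}^{r-1}L_{-(p-i)}v$ contributes $2\lambda_i h_W\, i\,\frac{p^2-i^2}{p^2-1}L_{-p}^{r-1}L_{-(p-i)}v$ under $L_i$ (killing the $W_{-i}$), while the leading term $L_{-p}^r v$ contributes a multiple $r(p+i)L_{-p}^{r-1}L_{-(p-i)}v$ via $[L_i,L_{-p}]$. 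Setting the sum to zero yields $\lambda_i = -r\,\frac{p^2-1}{2h_W\, i(p-i)}$. Substituting back into the vanishing condition for the coefficient of $L_{-p}^{r-1}v$ and simplifying the sum $\sum_{i=1}^{p-1} i(2p-i)\frac{p^2-i^2}{p^2-1}\cdot\frac{p^2-1}{i(p-i)} = \sum_{i=1}^{p-1}(2p-i)(p+i)$, which telescopes to a polynomial in $p$, gives the stated value $h = h_W + \frac{(13p+1)(p-1)}{12} + \frac{(1-r)p}{2}$.

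The main obstacle I anticipate is bookkeeping: making sure I have identified \emph{all} PBW monomials in $u$ that can produce the target vectors, and none spuriously, and that the commutator computations correctly account for the central terms (this is where $2h_W + \frac{p^2-1}{12}c = 0$ repeatedly enters to collapse $\frac{n^3-n}{12}c$ into $-2n h_W$ for $n=p$, and to rewrite $2h_W + \frac{(p-i)^2-1}{12}c = 2h_W\frac{p^2-i^2}{p^2-1}$ for $i<p$). A secondary subtlety is that $u$ is only determined as a coset modulo $U(\mathcal{L})u'$, but by Remark~\ref{uni} we may fix the representative with no $W_{-p}$ factor, so the argument via Lemma~\ref{nj} is legitimate. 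Once the two linear conditions are set up correctly, the remainder is the elementary evaluation of $\sum_{i=1}^{p-1}(2p-i)(p+i)$ and collecting terms, which I would not belabor.
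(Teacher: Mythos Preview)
Your proposal is correct and follows exactly the paper's argument: apply $L_p$ and $L_i$ for $1\le i\le p-1$ to the representative of $u$ with no $W_{-p}$ factor, invoke Lemma~\ref{nj} to force the coefficients of $L_{-p}^{r-1}v$ and $L_{-p}^{r-1}L_{-(p-i)}v$ to vanish, solve the resulting linear relations for the $\lambda_i$, and substitute back to obtain the stated value of $h$. The intermediate formulas you list (including the contributions $2\lambda_i h_W\, i(2p-i)\frac{p^2-i^2}{p^2-1}$ and $r(p+i)$, and the solution $\lambda_i = -r\frac{p^2-1}{2h_W\, i(p-i)}$) match the paper's computations line by line.
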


From Theorems \ref{w irr} and \ref{dov} and Lemma \ref{baza} we get the following:

\begin{corollary}
\label{wkrit}Suppose $2h_{W}+\frac{p^{2}-1}{12}c=0$. If $h\neq h_{W}%
+\frac{(13p+1)(p-1)}{12}+\frac{(1-r)p}{2}$ for all $r\in\mathbb{N}$, then
$J^{\prime}(c,h,h_{W})=J(c,h,h_{W})$ is a maximal submodule in $V(c,h,h_{W}%
)$and module $L^{\prime}(c,h,h_{W})=L(c,h,h_{W})$ is irreducible with PBW
basis
\[
B^{\prime}=\left\{  W_{-m_{s}}\cdots W_{-m_{1}}L_{-n_{t}}\cdots L_{-n_{1}%
}v:m_{j}\neq p\right\}
\]
and character
\[
\operatorname*{char}L^{\prime}(c,h,h_{W})=q^{h}(1-q^{p})\sum_{n\geq0}%
P_{2}(n)q^{n}=q^{h}(1-q^{p})\prod\limits_{k\geq1}(1-q^{k})^{-2}.
\]

\end{corollary}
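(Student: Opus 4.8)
The plan is to assemble Corollary \ref{wkrit} directly from the three ingredients already in hand, since it is essentially a packaging result. First I would observe that Theorem \ref{w irr} is stated in contrapositive-friendly form: if $L^{\prime}(c,h,h_{W})$ is reducible, there is a singular vector $u$ in $L^{\prime}(c,h,h_{W})$ with $\overline{u}=L_{-p}^{r}v$ for some $r\in\mathbb{N}$. Lifting $u$ to $V(c,h,h_{W})$ (in the representative where $W_{-p}$ does not occur, as in Remark \ref{uni}) gives a subsingular vector of the type required by Theorem \ref{dov}, whose hypothesis is then met; hence $h=h_{W}+\frac{(13p+1)(p-1)}{12}+\frac{(1-r)p}{2}$ for that $r$. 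So under the standing assumption $h\neq h_{W}+\frac{(13p+1)(p-1)}{12}+\frac{(1-r)p}{2}$ for all $r\in\mathbb{N}$, the module $L^{\prime}(c,h,h_{W})$ cannot be reducible, i.e.\ it is irreducible.

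Next I would identify $J^{\prime}(c,h,h_{W})$ with the maximal submodule $J(c,h,h_{W})$. By construction $J^{\prime}(c,h,h_{W})=U(\mathcal{L})u^{\prime}$ is a proper submodule (its character $q^{h+p}\prod(1-q^{k})^{-2}$ is strictly smaller than $\operatorname{char}V$, so $J^{\prime}\neq V$), hence $J^{\prime}\subseteq J$. On the other hand $L^{\prime}=V/J^{\prime}$ is irreducible by the previous paragraph, so $J^{\prime}$ is a maximal proper submodule, forcing $J^{\prime}=J$. Consequently $L^{\prime}(c,h,h_{W})=V/J^{\prime}=V/J=L(c,h,h_{W})$ is the irreducible highest weight module.

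Finally, the claim about the basis $B^{\prime}$ and the character is immediate from Lemma \ref{baza}, which already asserts that the images of the PBW monomials $W_{-m_{s}}\cdots W_{-m_{1}}L_{-n_{t}}\cdots L_{-n_{1}}v$ with all $m_{j}\neq p$ form a basis of $L^{\prime}(c,h,h_{W})$; combined with the identification $L^{\prime}=L$ this gives the stated PBW basis for $L(c,h,h_{W})$. The character formula $\operatorname{char}L^{\prime}(c,h,h_{W})=q^{h}(1-q^{p})\sum_{n\geq0}P_{2}(n)q^{n}=q^{h}(1-q^{p})\prod_{k\geq1}(1-q^{k})^{-2}$ was already computed right after Theorem \ref{u'} as $\operatorname{char}V-\operatorname{char}J^{\prime}$, and now refers to $L$.

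There is no serious obstacle here: the corollary is a formal consequence of Theorems \ref{w irr} and \ref{dov} and Lemma \ref{baza}, exactly as the excerpt advertises. The only point requiring a word of care is the logical direction in the first step—making sure that the existence of a singular vector in the quotient $L^{\prime}$ genuinely produces a subsingular vector in $V$ of the precise shape $\overline{u}=L_{-p}^{r}v$ demanded by Theorem \ref{dov}, which is exactly what Remark \ref{uni} guarantees—and noting that $p$ in the statement is the fixed integer from the standing hypothesis $2h_{W}+\frac{p^{2}-1}{12}c=0$, so no summation or choice over $p$ is involved.
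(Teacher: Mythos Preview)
Your proposal is correct and follows exactly the route the paper indicates: the corollary is deduced directly from Theorems \ref{w irr} and \ref{dov} together with Lemma \ref{baza}, and you have spelled out the contrapositive argument and the identification $J^{\prime}=J$ in the natural way. The paper's own proof is nothing more than the one-line remark ``From Theorems \ref{w irr} and \ref{dov} and Lemma \ref{baza} we get the following,'' so your version is simply a careful unpacking of that sentence.
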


\bigskip

Assume now that there is a subsingular vector $u\in V(c,h,h_{W})$ such that
$\overline{u}=L_{-p}^{r}v$. We show that $U(\mathcal{L})\left\{  u,u^{\prime
}\right\}  =J(c,h,h_{W})$ is a maximal submodule in $V(c,h,h_{W})$.

First we need a generalization of Lemma \ref{key} to $L^{\prime}(c,h,h_{W})$.

\begin{lemma}
\label{sub}Let $x=W_{-m_{k}}\cdots W_{-m_{1}}L_{-n_{t}}\cdots L_{-n_{1}}v\in
L^{\prime}(c,h,h_{W})$ such that $m_{j}=p$ for some $j$, and $m_{i}\neq p$ for
$i\neq j$. Then $\deg_{W}\overline{x}\geq k$, if $x$ is considered in a basis
$B^{\prime}$.
\end{lemma}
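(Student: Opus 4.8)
The plan is to rewrite $x$ by moving the single $W_{-p}$ factor past everything to its right, using the commutation relations in $\mathcal{L}$, and then to reduce back to the basis $B^{\prime}$ by applying the singular vector identity $J^{\prime}(c,h,h_{W}) = U(\mathcal{L})u^{\prime}$. Write $x = A \, W_{-p} \, B v$ where $A = W_{-m_k}\cdots$ collects the $W$-factors above $p$ (so $\deg_W A = k-1$) and $Bv = L_{-n_t}\cdots L_{-n_1}v$ is the $L$-tail (all $L$-factors, since the $W$'s with $m_i \ne p$ that are $\le p$ would sit below $W_{-p}$ in PBW order — one must first treat the case where such $W$-factors occur below $W_{-p}$, but those commute with everything to their right up to $W$'s and $C$, so they only raise $W$-degree and the bound is easier there). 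The key observation is that $u^{\prime} = W_{-p}v + (\text{higher } W\text{-degree, i.e. terms in } V^W_{(\ge 2)})$ by Theorem \ref{u'}, since $\overline{u^{\prime}} = W_{-p}v$ and $u^{\prime}\in\mathcal{W}$ means all its terms are pure $W$-monomials of $W$-degree $\ge 1$, with the degree-$1$ part being exactly $W_{-p}v$.

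The main step: in $L^{\prime}(c,h,h_{W})$, the relation $u^{\prime} \equiv 0$ gives $W_{-p}v = -(\text{sum of pure-}W \text{ monomials of } W\text{-degree} \ge 2)$. More generally I would argue that $W_{-p}\,Bv$, for any $L$-monomial $Bv$, can be rewritten modulo $J^{\prime}$ as a combination of basis vectors each of which, in $B^{\prime}$, has $W$-degree $\ge 1$: indeed $W_{-p}Bv = \pm B W_{-p} v + (\text{commutator terms})$, the commutator terms $[L_{-n_i}, W_{-p}] = (\text{scalar})W_{-p-n_i}$ again each carrying one $W$-factor with index $\ne p$, hence surviving in $B^{\prime}$ with $W$-degree $1$; and $B W_{-p} v = B u^{\prime} - B(u^{\prime} - W_{-p}v)$, where $B u^{\prime} \in J^{\prime}$ vanishes in $L^{\prime}$ and $B(u^{\prime}-W_{-p}v)$ expands $B$ acting on pure-$W$ monomials of $W$-degree $\ge 2$ — and acting by $L_{-n}$'s on such a monomial can drop $W$-degree by at most $1$ per factor (by \eqref{w1}, or rather its negative-index analogue), but actually $L_{-n}$ acting on a pure $W$-monomial only produces commutators $[L_{-n},W_{-m}] = (n-m)W_{n-m} + \delta\cdots C$, which keep the same number of $W$-factors (the $C$-term and the surviving $W_{-m}$'s), so $W$-degree is preserved at $\ge 2$. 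Prepending $A$ (with $\deg_W A = k-1$) to all of these raises every term's $W$-degree by $k-1$, giving $\deg_W \overline{x} \ge (k-1) + 1 = k$.

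The hard part will be bookkeeping the PBW reordering carefully enough to be sure no cancellation drops the lowest $W$-degree component below $k$ — in particular confirming that the genuinely degree-$1$ contribution (from $B W_{-p} v$'s commutator terms and the $[L,W]$ commutators, all of which land in $\mathcal{W}$-complement monomials with exactly one $W$-factor of index $\ne p$) is not entirely killed, or, if it is, that what remains still has $W$-degree $\ge k$. One clean way around this: rather than tracking the exact lowest component, prove the weaker-looking but sufficient statement that \emph{every} $B^{\prime}$-basis term appearing in the expansion of $x$ has $W$-degree $\ge k$; this follows inductively on the length of $B$ from the relations above, since each reduction step either (a) uses $u^{\prime}\equiv 0$, trading $W_{-p}$ for $W$-degree-$\ge 2$ pure-$W$ material, or (b) produces commutator terms preserving the $W$-factor count, and in all cases the $k-1$ spectator $W$-factors from $A$ remain untouched. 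Then $\deg_W\overline{x}\ge k$ is immediate. I expect the induction on the number of $L$-factors in $B$, with the base case $B=1$ handled directly by $W_{-p}v \equiv -(u^{\prime}-W_{-p}v)$, to be the cleanest route, and the only real subtlety is making sure the "$L$ acting on pure-$W$ monomial preserves $W$-count" claim is stated and used correctly.
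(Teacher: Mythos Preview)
Your approach is correct and is essentially the same as the paper's: both arguments use the relation $u'\equiv 0$ to replace $W_{-p}v$ by a combination of pure-$W$ monomials of $W$-degree $\ge 2$, handle $W_{-p}L_{-n_t}\cdots L_{-n_1}v$ by induction on the number of $L$-factors (your final paragraph lands exactly on the paper's scheme), and then prepend the $k-1$ spectator $W$-factors with indices $\ne p$ to raise everything by $k-1$. The one point you flag as a subtlety---that commuting an $L_{-n}$ past the $W$'s in the substituted terms might reintroduce a $W_{-p}$---is also glossed over in the paper's proof; it is harmless because each such reintroduction occurs in a monomial of strictly higher $W$-degree (so one may induct on $N-k$ with $N$ the weight, or equivalently observe that the rewriting process terminates since $W$-degree is bounded by the weight), and you should make that termination argument explicit when you write the proof out.
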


\begin{proof}
We make use of relation $u^{\prime}=0$ in $L^{\prime}(c,h,h_{W})$. This leads
to $W_{-p}v=\sum_{i}z_{i}v$ for some $z_{i}\in U(\mathcal{L}_{-})$, where
$\deg_{W}z_{i}>1$. Then $W_{-p}L_{-n}v=(n-p)W_{-p-n}v+\sum z_{i}^{\prime}v$,
where $\deg_{W}z_{i}^{\prime}=\deg_{W}z_{i}>1$. We continue by induction to
prove that $W_{-p}L_{-n_{t}}\cdots L_{-n_{1}}v=\sum y_{i}v$, where $y_{i}v\in
B^{\prime}$ and $\deg_{W}y_{i}>1$. Acting with $W_{-m_{i}}$ for $i\neq j$ we
get $W_{-m_{k}}\cdots W_{-m_{1}}L_{-n_{t}}\cdots L_{-n_{1}}v=\sum W_{-m_{k}%
}\cdots W_{-m_{1}}y_{i}v $ what completes the proof.
\end{proof}

\begin{lemma}
\label{key'}Let $0\neq x\in L^{\prime}(c,h,h_{W})$ and $\deg_{W}\overline
{x}=k$. If $\overline{x}\notin\mathcal{W}$ and $n\in\mathbb{N}$ is the
smallest such that $L_{-n}$ occurs as a factor in one of the terms in
$\overline{x}$ then the part of $W_{n}x$ of the $W$-degree $k$ is given by
\[
n\left(  2h_{W}+\frac{n^{2}-1}{12}c\right)  \frac{\partial\overline{x}%
}{\partial L_{-n}}.
\]

\end{lemma}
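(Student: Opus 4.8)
The plan is to mimic the proof of Lemma \ref{key} a), but now carried out inside the quotient module $L^{\prime}(c,h,h_{W})$ with respect to the basis $B^{\prime}$, where the elements $W_{-p}$ are excluded. First I would write $x=\sum_{j}\overline{x_{j}}$ as the decomposition by $W$-degree in the basis $B^{\prime}$, so $\overline{x}=\overline{x_{0}}$ has $W$-degree $k$. Fix $n\in\mathbb{N}$ minimal with $L_{-n}$ occurring in $\overline{x}$. In the Verma module the commutator computation $[W_{n},L_{-n_{i}}]=(n+n_{i})W_{n-n_{i}}$ (plus a central term only when $n_{i}=n$) shows that the only way to produce a term of $W$-degree exactly $k$ from $W_{n}$ acting on a PBW monomial of $W$-degree $k$ is to "hit" an $L$-factor with index $\leq n$ and turn it into a $W$-factor; by minimality of $n$ the index must equal $n$ and the resulting scalar is $n(n+n)\cdot(\text{central contribution})$, which after summing over the repeated occurrences of $L_{-n}$ and accounting for the central term gives precisely $n\left(2h_{W}+\frac{n^{2}-1}{12}c\right)\frac{\partial\overline{x}}{\partial L_{-n}}$. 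So far this is exactly the Verma-module argument.

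The genuinely new point is that we are working modulo $J^{\prime}(c,h,h_{W})$, so when we rewrite $W_{n}x$ back into the basis $B^{\prime}$ we must verify that this rewriting does not destroy (or spuriously create) the claimed $W$-degree-$k$ component. Here Lemma \ref{sub} does the work: any PBW monomial that contains a factor $W_{-p}$, when re-expressed in $B^{\prime}$ via the relation $u^{\prime}=0$, only contributes terms of strictly larger $W$-degree (if it contained one $W_{-p}$ among $k$ $W$-factors, its rewriting lands in $W$-degree $\geq k$, hence $>k-1$, and after the analysis $>$ the degree we are tracking). Therefore the terms of $W_{n}x$ that involved producing a $W_{-p}$ — which would happen if $n=p$ and an $L_{-p}$ were hit, or more generally if some $W_{-(n-n_i)}$ equalled $W_{-p}$ — get pushed up in $W$-degree and cannot interfere with the $W$-degree-$k$ part, provided $n\neq p$ there is in fact nothing to worry about, and the formula $\frac{\partial\overline{x}}{\partial L_{-n}}$ makes sense in $B^{\prime}$ since, as noted in the proof of Theorem \ref{w irr}, $B^{\prime}$ is closed under $\frac{\partial}{\partial L_{-n}}$.

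Concretely the steps are: (1) decompose $x$ by $W$-degree in $B^{\prime}$ and reduce to the claim for $\overline{x}=\overline{x_{0}}$, since applying $W_{n}$ to the higher homogeneous pieces $\overline{x_{j}}$, $j\geq 1$, produces only $W$-degrees $\geq k+1$ by \eqref{w1}; (2) compute $W_{n}\overline{x}$ in the Verma module, extracting the $W$-degree-$k$ component exactly as in Lemma \ref{key} a), obtaining $n\left(2h_{W}+\frac{n^{2}-1}{12}c\right)\frac{\partial\overline{x}}{\partial L_{-n}}$ together with possibly some monomials of $W$-degree $k$ that carry a factor $W_{-p}$; (3) invoke Lemma \ref{sub} to show those exceptional monomials, once rewritten in $B^{\prime}$, have $W$-degree $>k$, hence drop out of the $W$-degree-$k$ part in $L^{\prime}(c,h,h_{W})$; (4) conclude. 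I expect step (3) — bookkeeping which monomials in $W_{n}\overline{x}$ can acquire a $W_{-p}$ and checking Lemma \ref{sub} applies to each — to be the main obstacle, though it is really just careful combinatorics rather than a conceptual difficulty; everything else is a direct transcription of the already-proven Verma-module lemma.
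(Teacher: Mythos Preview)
Your approach is essentially the same as the paper's: reduce to $W_{n}\overline{x}$ via \eqref{w1}, compute the commutators $[W_{n},L_{-n_{i}}]$, isolate the derivative contribution from the case $n_{i}=n$, and invoke Lemma~\ref{sub} to dispose of any $W_{-p}$ produced when $n_{i}>n$. One small bookkeeping slip in your step~(2): the exceptional monomials carrying a factor $W_{-p}$ arise in $W$-degree $k+1$, not $k$, since $[W_{n},L_{-n_{i}}]=(n+n_{i})W_{n-n_{i}}$ for $n_{i}>n$ adds a $W$-factor to a degree-$k$ monomial; the role of Lemma~\ref{sub} is then precisely that rewriting such a degree-$(k+1)$ monomial (with exactly one $W_{-p}$, the other $m_{j}\neq p$ by hypothesis) in the basis $B'$ keeps it at $W$-degree $\geq k+1$, so it cannot contaminate the degree-$k$ part. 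With that correction your steps (1)--(4) coincide with the paper's argument.
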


\begin{proof}
We see from (\ref{w1}) that the part of $W$-degree $k$ comes from
$W_{n}\overline{x}$. Let $y=W_{-m_{k}}\cdots W_{-m_{1}}L_{-n_{t}}\cdots
L_{-n_{1}}v $, $m_{j}\neq p$ a component of $\overline{x}$. Then
\[
W_{n}y=\sum_{i=1}^{t}W_{-m_{k}}\cdots W_{-m_{1}}L_{-n_{t}}\cdots\lbrack
W_{n},L_{-n_{i}}]\cdots L_{-n_{1}}v
\]
and $n_{t}\geq n$. If $n_{i}=n$ we have
\[
\lbrack W_{n},L_{-n}]=n\left(  2W_{0}+\frac{n^{2}-1}{12}C\right)
\]
so this part contributes with $n(2h_{W}+\frac{n^{2}-1}{12}c)\frac
{\partial\overline{x}}{\partial L_{-n}}$.

If $n_{i}>n$ we get
\[
\lbrack W_{n},L_{-n_{i}}]=(n+n_{i})W_{n-n_{i}}.
\]
In case $n-n_{i}\neq p$ we get a component with $W$-degree $k+1$. Suppose
$n-n_{i}=p$. Then by Lemma \ref{sub}, this component can be substituted by the
sum of components with $W$-degree greater than $k$. This completes the proof.
\end{proof}

\begin{lemma}
\label{L-rstupanj}If $u$ is a subsingular vector such that $\overline
{u}=L_{-p}^{r}v$, then $\deg_{L}u=r$.
\end{lemma}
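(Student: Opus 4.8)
The plan is to argue by contradiction. Since the monomial $L_{-p}^{r}v=\overline{u}$ occurs in $u$ — it is precisely the $W$-degree-$0$ part of $u$ — and has $L$-degree $r$, we already have $\deg_{L}u\geq r$, so it suffices to rule out $d:=\deg_{L}u>r$. Fix $u$ written in the basis $B'$ (so no $W_{-p}$ occurs) and grade $V(c,h,h_{W})$ by the pair $(\deg_{W},\deg_{L})$. Let $k$ be the least $W$-degree of an $L$-degree-$d$ monomial occurring in $u$, and let $w$ be the bihomogeneous component of $u$ of bidegree $(k,d)$. First I would observe that $k\geq 1$: the $W$-degree-$0$ part of $u$ equals $\overline{u}=L_{-p}^{r}v$, a single monomial of $L$-degree $r\neq d$, so no $L$-degree-$d$ monomial of $u$ has $W$-degree $0$. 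Hence $w\neq 0$ is bihomogeneous of bidegree $(k,d)$ with $k\geq 1$, contains no $W_{-p}$, and has weight $h+rp$.

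Next I would, for each $n\in\mathbb{N}$, isolate the bidegree-$(k,d-1)$ component of $W_{n}u$ computed in the basis $B'$ of $L'(c,h,h_{W})$. Since $W_{n}$ commutes with every $W_{-m}$, any nonzero term of $W_{n}M$ comes from contracting $W_{n}$ with at least one $L$-factor of $M$, so $W_{n}$ strictly lowers the $L$-degree; and if $W_{n}$ is contracted with an $L_{-n_{i}}$ of index $n_{i}<n$ then the resulting raising operator $W_{n-n_{i}}$ must be absorbed by a further $L$-factor, so at least two $L$-factors disappear. Using $(\ref{w1})$ together with Lemma \ref{sub} — whose rewriting of $W_{-p}$-terms, as the proof shows, neither lowers the $W$-degree nor raises the $L$-degree — one checks that the only way to land in bidegree $(k,d-1)$ is to apply $W_{n}$ to an $L$-degree-$d$, $W$-degree-$k$ monomial of $u$, i.e. to a monomial of $w$, and there to contract $W_{n}$ with a factor $L_{-n}$, producing $[W_{n},L_{-n}]=n\bigl(2W_{0}+\frac{n^{2}-1}{12}C\bigr)$; the non-semisimple part of $W_{0}$ contributes only debris of strictly smaller $L$-degree. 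Thus the bidegree-$(k,d-1)$ component of $W_{n}u$, in the basis $B'$, equals
\[
n\Bigl(2h_{W}+\frac{n^{2}-1}{12}c\Bigr)\frac{\partial w}{\partial L_{-n}},
\]
a $B'$-combination containing no $W_{-p}$ (notation $\partial/\partial L_{-n}$ as in Lemma \ref{key}).

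Now I invoke that $u$ is singular in $L'(c,h,h_{W})$: $W_{n}u=0$ for all $n\in\mathbb{N}$, so the displayed component vanishes. For $n\neq p$ we have $2h_{W}+\frac{n^{2}-1}{12}c=\frac{(n-p)(n+p)}{12}c\neq 0$ by the standing hypothesis (the degenerate possibility $c=0$ forces $h_{W}=0$ and $p=1$, and is handled by a minor variant of the same argument, inspecting bidegree $(k+1,d-1)$), hence $\partial w/\partial L_{-n}=0$, i.e. $L_{-n}$ does not occur in $w$. Therefore every $L$-factor in $w$ equals $L_{-p}$, so each monomial of $w$ has the form $W_{-m_{k}}\cdots W_{-m_{1}}L_{-p}^{d}v$ with all $m_{i}\neq p$. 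Comparing weights gives $m_{1}+\cdots+m_{k}=rp-dp=(r-d)p$, which is negative since $d>r$, contradicting $k\geq 1$ and $m_{i}\geq 1$. Hence $d\leq r$, and together with $\deg_{L}u\geq r$ this yields $\deg_{L}u=r$.

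The delicate step is the middle paragraph: one must verify carefully that, after $W_{n}u$ is expressed in the basis $B'$, nothing beyond "contract $W_{n}$ with an $L_{-n}$-factor inside $w$" reaches bidegree $(k,d-1)$ — in particular that the $W_{-p}$-eliminations of Lemma \ref{sub}, the lower-$\deg_{L}$ terms produced by the non-semisimple action of $W_{0}$, and the $L$-degree-$d$ monomials of $u$ of $W$-degree $>k$ are all confined to other bidegrees. Once this bookkeeping is in place, the rest is an immediate consequence of the lemmas already established.
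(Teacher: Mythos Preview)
Your argument is correct and complete (with the same tacit assumption $c\neq 0$ that the paper also makes), but it is organized differently from the paper's proof.

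The paper proceeds by induction on the $W$-degree decomposition $u=\sum_{n\geq 0}\overline{u_{n}}$: assuming $\deg_{L}\overline{u_{j}}\leq r$ for $j<n$, it isolates the $L$-degree-$>r$ part $y$ of $u_{n}$, observes that the smallest $L$-index $m$ appearing in $\overline{y}$ must satisfy $m<p$ (by weight), and applies $W_{m}$ once via Lemma~\ref{key'}; the resulting $W$-degree-$k$ piece has $L$-degree $\geq r$, while $W_{m}$ applied to the rest of $u$ (which has $L$-degree $\leq r$) can only produce $L$-degree $<r$, contradicting $W_{m}u=0$. You instead pick out a single extremal bidegree $(k,d)$ with $d=\deg_{L}u$ maximal and $k$ minimal among $L$-degree-$d$ monomials, and by examining the $(k,d-1)$-component of $W_{n}u$ for \emph{every} $n\neq p$ you force all $L$-factors of the extremal piece $w$ to be $L_{-p}$; the weight identity $\sum m_{i}+dp=rp$ then immediately contradicts $d>r$.

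Your route avoids the induction and gives a very clean endgame, at the price of having to do the bidegree bookkeeping (Lemma~\ref{sub} rewriting, the non-semisimple $W_{0}$-terms, monomials of higher $W$-degree) uniformly for all $n$; the paper's route is more economical, needing only the single index $m<p$ and the off-the-shelf Lemma~\ref{key'}. Both approaches tacitly need $2h_{W}+\frac{n^{2}-1}{12}c=\frac{(n-p)(n+p)}{12}c\neq 0$ for $n\neq p$, i.e.\ $c\neq 0$; your parenthetical remark about the degenerate case $c=0$ is not really a proof, but the paper's argument has exactly the same gap there.
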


\begin{proof}
Consider $u$ as a singular vector in $L^{\prime}(c,h,h_{W})$. We use notation
$u=\sum_{n\geq0}\overline{u_{n}}$, where $\overline{u_{0}}=L_{-p}^{r}$, and
proceed by induction on $n$. Suppose $\deg_{L}\overline{u_{j}}\leq r$ for
$j<n$. Suppose to the contrary, that $\deg_{L}\overline{u_{n}}>r$. Denote by
$x$ the sum of all components in $u_{n}$ of $L$-degree at most $r$ and
$u_{n}=x+y$. Note that by definition $\overline{u_{n}}$ is homogeneous
respective to $W$-degree. Say $\deg_{W}\overline{u_{n}}=k$. Then $\deg
_{W}\overline{y}=k$. Let $m$ the smallest such that $L_{-m}$ occurs as a
factor in $y$. Obviously, $m<p$. We have $y=u-\sum_{i=0}^{n-1}\overline{u_{i}%
}-x$. By Lemma \ref{key'} the part of $W_{m}y$ of the $W$-degree $k$ is
$m(2h_{W}+\frac{m^{2}-1}{12}c)\frac{\partial\overline{y}}{\partial L_{-m}}%
\neq0$. Since $\deg_{L}y>r$, we have $\deg_{L}W_{m}y\geq r$. On the other hand
$\deg_{L}(\sum_{i=0}^{n-1}\overline{u_{i}}-x)\leq r$ so $\deg_{L}W_{m}%
(\sum_{i=0}^{n-1}\overline{u_{i}}-x)<r$. Since $W_{m}u=0$ in $L^{\prime
}(c,h,h_{W})$, we have $W_{m}y=-W_{m}(\sum_{i=0}^{n-1}\overline{u_{i}}-x)$
which is a contradiction.
\end{proof}

\begin{lemma}
\label{njnj}Let $0\neq x\in U(\mathcal{L})\left\{  u,u^{\prime}\right\}  $.
Then $W_{-p}$ or $L_{-p}^{k}$ for some $k\geq r$ occur as a factor in some
part of $\overline{x}$.
\end{lemma}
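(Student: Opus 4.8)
The plan is to describe every element of $U(\mathcal{L})\{u,u'\}$ concretely, pass to its lowest $W$-degree component, and track the ``forbidden'' factors $W_{-p}$ and $L_{-p}^{r}$ through PBW straightening. First I would record the structure of the submodule: since $u'$ is a genuine singular vector, $U(\mathcal{L})u'=U(\mathcal{L}_{-})u'=J'(c,h,h_{W})$; and for $u$, viewed as a singular vector in $L'(c,h,h_{W})$, the vector $W_{0}u$ is again singular of the same weight $h+rp$ (because $[\mathcal{L}_{+},W_{0}]\subseteq\mathcal{L}_{+}$), hence a scalar multiple of $u$ by the uniqueness in Remark \ref{uni}; comparing with the lowest $W$-degree term $L_{-p}^{r}v$ (and using that $W_{0}-h_{W}$ strictly raises $W$-degree) gives $W_{0}u=h_{W}u$ in $L'(c,h,h_{W})$. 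Consequently $U(\mathcal{L})u\subseteq U(\mathcal{L}_{-})u+J'(c,h,h_{W})$, so any $0\neq x\in U(\mathcal{L})\{u,u'\}$ can be written $x=y'u'+yu$ with $y',y\in U(\mathcal{L}_{-})$. If $yu\in J'(c,h,h_{W})$ then $x\in J'(c,h,h_{W})$ and Lemma \ref{nj} already supplies a term of $\overline{x}$ containing $W_{-p}$; so from now on $y\neq0$ and $yu\notin J'(c,h,h_{W})$.

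Next I would analyse leading terms. The $W$-degree is a genuine $\mathbb{Z}$-grading on $U(\mathcal{L}_{-})$ and on $V(c,h,h_{W})$, and $U(\mathcal{L}_{-})$ has no zero divisors, so $\overline{y'u'}=\overline{y'}\,\overline{u'}=\overline{y'}W_{-p}v$ and $\overline{yu}=\overline{y}\,\overline{u}=\overline{y}L_{-p}^{r}v$. Passing to leading symbols in the associated graded of the total PBW-length filtration — the polynomial ring on symbols $\xi_{n}\leftrightarrow W_{-n}$, $\eta_{n}\leftrightarrow L_{-n}$, on which taking the top-length symbol is multiplicative — the symbol of $\overline{y'}W_{-p}v$ is divisible by $\xi_{p}$ and that of $\overline{y}L_{-p}^{r}v$ by $\eta_{p}^{r}$. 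Now compare the $W$-degrees of $\overline{y'u'}$ and $\overline{yu}$. If they differ, $\overline{x}$ equals whichever has smaller $W$-degree, whose leading symbol is divisible by $\xi_{p}$ or by $\eta_{p}^{r}$, so $\overline{x}$ has a monomial with the factor $W_{-p}$ or $L_{-p}^{r}$. If they coincide, $\overline{x}=\overline{y'}W_{-p}v+\overline{y}L_{-p}^{r}v$; comparing total lengths, unless the two leading symbols cancel, the leading symbol of $\overline{x}$ is divisible by $\xi_{p}$, or by $\eta_{p}^{r}$, or is a nonzero sum of one of each kind, hence in every subcase a nonzero combination of PBW monomials each carrying $W_{-p}$ or $L_{-p}^{r}$ — which again finishes the argument.

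The hard part is the leftover case where the leading symbols cancel, i.e.\ $\mathrm{gr}(\overline{y'})\,\xi_{p}=-\mathrm{gr}(\overline{y})\,\eta_{p}^{r}$. Since $\xi_{p},\eta_{p}$ are algebraically independent this forces $\mathrm{gr}(\overline{y})$ divisible by $\xi_{p}$ and $\mathrm{gr}(\overline{y'})$ divisible by $\eta_{p}^{r}$; equivalently, every top-length monomial of $\overline{y}$ already carries a factor $W_{-p}$ and every top-length monomial of $\overline{y'}$ carries $L_{-p}^{r}$. I would then descend one length level in $\overline{x}$: its length $\ell-1$ component is the sum of (i) lower-length parts of $\overline{y'},\overline{y}$ applied to $W_{-p}v,L_{-p}^{r}v$, still carrying $W_{-p}$ resp.\ $L_{-p}^{r}$, and (ii) the straightening corrections of the two leading pieces, governed by $[L_{-n},W_{-p}]=(p-n)W_{-n-p}$ and $[L_{-n},L_{-p}]=(p-n)L_{-n-p}$ together with the crucial identity $[L_{-p},W_{-p}]=0$; thanks to the divisibility just forced, each such correction still contains $W_{-p}$ or $L_{-p}^{r}$. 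Iterating this descent, and using $x\neq0$ so that some length component is nonzero, produces a nonzero homogeneous component of $\overline{x}$ all of whose monomials contain $W_{-p}$ or $L_{-p}^{r}$. The delicate point — verifying that the forbidden factor genuinely survives \emph{every} straightening correction at \emph{every} length level, for which a nested induction on $r$ through the submodule $U(\mathcal{L})\overline{u}\subseteq L'(c,h,h_{W})$ is probably the cleanest route — is the main obstacle; here Lemmas \ref{sub}, \ref{key'} and \ref{L-rstupanj} are the tools that control the $W$- and $L$-degree shifts throughout.
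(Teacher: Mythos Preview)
Your approach matches the paper's almost exactly: write $x=yu+zu'$, push the $U(\mathcal{L}_{+})$ and $\mathcal{L}_{0}$ parts of $y$ into $J'$ (your derivation of $W_{0}u=h_{W}u$ in $L'$ via Remark~\ref{uni} is legitimate and not circular; the paper only records this fact \emph{after} the lemma, but your argument is independent), and then read off the factor from $\overline{x}$. The paper's own proof is three sentences and simply asserts that ``the longest component of $\overline{x}$'' carries $L_{-p}^{k}$ with $k\geq r$, never mentioning the possibility that the leading pieces of $\overline{y}L_{-p}^{r}v$ and $\overline{z}W_{-p}v$ could cancel. You have correctly isolated that as the real content of the lemma.

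Where your proposal still has a gap is the descent. You claim that at length $\ell-1$ each straightening correction ``still contains $W_{-p}$ or $L_{-p}^{r}$'', but this is not automatic: commuting $L_{-n}$ past $L_{-p}$ produces $L_{-n-p}$, which can consume copies of $L_{-p}$, so a correction term of $\overline{y}L_{-p}^{r}$ need not carry $L_{-p}^{r}$. The clean fix is a syzygy rewriting rather than a term-by-term descent. If the top symbols cancel, then in $S(\mathcal{L}_{-})$ one has $\mathrm{gr}(\overline{y})\,\eta_{p}^{r}=-\mathrm{gr}(\overline{z})\,\xi_{p}$, and since $(\eta_{p}^{r},\xi_{p})$ is a regular sequence there exists $f$ with $\mathrm{gr}(\overline{y})=f\xi_{p}$ and $\mathrm{gr}(\overline{z})=-f\eta_{p}^{r}$. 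Lift $f$ to $\tilde f\in U(\mathcal{L}_{-})$ of the correct $W$-degree; because $[L_{-p},W_{-p}]=0$ (the identity you already flagged) one has $W_{-p}L_{-p}^{r}=L_{-p}^{r}W_{-p}$, and hence
\[
\overline{y}\,L_{-p}^{r}+\overline{z}\,W_{-p}
=(\overline{y}-\tilde f W_{-p})\,L_{-p}^{r}+(\overline{z}+\tilde f L_{-p}^{r})\,W_{-p},
\]
with both new coefficients of strictly smaller length. Induction on length then terminates either with one coefficient zero (the other supplies the factor directly) or with non-cancelling top symbols (your easy case). The same rewriting, applied to $y,z$ themselves rather than to $\overline{y},\overline{z}$, also disposes of the case $\overline{y}L_{-p}^{r}+\overline{z}W_{-p}=0$ where $\overline{x}$ jumps to a higher $W$-degree, which your sketch does not explicitly cover. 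With this in place the argument is complete and, in fact, more careful than the paper's.
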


\begin{proof}
Suppose $x$ is homogeneous. Let $x=yu+zu^{\prime}$. If $y\in U(\mathcal{L}%
_{+})$ then $x\in U(\mathcal{L})u^{\prime}$, so we may apply Lemma \ref{nj}.
Furthermore, vectors from $\mathcal{L}_{0}u$ contain $L_{-p}^{r}v$ since
\[
W_{0}L_{-p}^{r}v=h_{W}L_{-p}^{r}+rpW_{-p}L_{-p}^{r-1}v,
\]
so $\overline{x}\in\mathbb{C}L_{-p}^{r}$. Finally, for $y\in U(\mathcal{L}%
_{-})$, $x$ must contain $L_{-p}^{k}$ for some $k\geq r$ as a factor in the
longest component of $\overline{x}$.
\end{proof}

\bigskip

For a PBW monomial $x=W_{-m_{s}}\cdots W_{-m_{1}}L_{-n_{t}}\cdots L_{-n_{1}}v
$ define $L_{-p}$-degree as a number of factors $L_{-n_{i}}=L_{-p}$, i.e.,
$\deg_{L_{-p}}x=\left\vert \left\{  i\in\left\{  1,\ldots,t\right\}
:n_{i}=p\right\}  \right\vert $.

\begin{lemma}
\label{baza2}Let $B$ the set of all PBW vectors $W_{-m_{s}}\cdots W_{-m_{1}%
}L_{-n_{t}}\cdots L_{-n_{1}}v$ with $L_{-p}$-degree strictly less than $r$ and
such that $m_{j}\neq p$. Then $B$ is a basis for a quotient module
$V(c,h,h_{W})/U(\mathcal{L})\left\{  u,u^{\prime}\right\}  $.
\end{lemma}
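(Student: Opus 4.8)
The plan is to prove the two halves of the claim separately. First, the images of the monomials in $B$ are linearly independent in $V(c,h,h_{W})/U(\mathcal{L})\{u,u'\}$: if a finite combination $x=\sum_{b\in B}\lambda_{b}b$ lay in $U(\mathcal{L})\{u,u'\}$, then, decomposing $x$ into its $W$-degree homogeneous components, the lowest nonzero component $\overline{x}$ would be a linear combination of PBW monomials none of which contains a factor $W_{-p}$ or a factor $L_{-p}^{k}$ with $k\geq r$, contradicting Lemma \ref{njnj} unless $x=0$. Hence all $\lambda_{b}$ vanish, and the span of the images of $B$ has character
\[
q^{h}(1-q^{p})(1-q^{rp})\prod_{k\geq1}(1-q^{k})^{-2},
\]
obtained by combining, in the $W$-part, all partitions into parts $\neq p$ and, in the $L$-part, all partitions using the part $p$ at most $r-1$ times.

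Second, $B$ spans. Since $J'(c,h,h_{W})=U(\mathcal{L})u'\subseteq U(\mathcal{L})\{u,u'\}$, Lemma \ref{baza} already gives that the images of the monomials of $B'$ span $V(c,h,h_{W})/U(\mathcal{L})\{u,u'\}$, so it suffices to rewrite each $b\in B'$ with $\deg_{L_{-p}}b\geq r$, modulo $U(\mathcal{L})\{u,u'\}$, as a combination of monomials of $B'$ that are strictly smaller in the well-founded order on monomials of a fixed weight that compares $W$-degree in decreasing order and, for equal $W$-degree, $L_{-p}$-degree in increasing order; a monomial of $B'$ is minimal for this order precisely when its $L_{-p}$-degree is $<r$, i.e.\ when it lies in $B$. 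The reduction uses the relation $u\equiv0$. Writing $u$ in the basis $B'$, its unique $W$-degree-zero term is $\overline{u}=L_{-p}^{r}v$, and no other term has $L_{-p}$-degree $r$ (such a term has $L$-degree $\geq r$; by Lemma \ref{L-rstupanj} this forces the $L$-part to be $L_{-p}^{r}$, and since $u$ carries weight $rp$ above $h$ its $W$-part is then empty). Hence $u=L_{-p}^{r}v+w$ with $w$ a combination of monomials of $B'$ each of $W$-degree $\geq1$ and of $L_{-p}$-degree $\leq r-1$. Now fix $b\in B'$ with $\deg_{L_{-p}}b=d\geq r$; in PBW order its $d$ factors $L_{-p}$ are consecutive, so $b=aL_{-p}^{d}b_{0}v$ where $a$ collects the $W$-factors and the $L$-factors of index $>p$ and $b_{0}$ the $L$-factors of index $<p$. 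Put $Y=aL_{-p}^{d-r}b_{0}\in U(\mathcal{L}_{-})$. Then $Yu\in U(\mathcal{L})\{u,u'\}$ and $Yu=YL_{-p}^{r}v+Yw$; bringing $YL_{-p}^{r}v$ back to the basis $B'$ (moving the factors $L_{-p}$ past $b_{0}$, then clearing any factor $W_{-p}$ produced, as in the proof of Lemma \ref{sub}) yields $b$ plus monomials of the same $W$-degree as $b$ and of strictly smaller $L_{-p}$-degree, together with monomials of strictly larger $W$-degree; and $Yw$, reduced the same way, is a combination of monomials of strictly larger $W$-degree than $b$ (each term of $w$ carries a factor $W_{-e}$, $e\neq p$, and clearing $W_{-p}$ only raises the $W$-degree). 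Thus $b$ is congruent modulo $U(\mathcal{L})\{u,u'\}$ to a combination of strictly smaller monomials, and induction along the order completes the spanning statement, so $B$ is a basis and the quotient has the character displayed above.

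The step I expect to be the main obstacle is precisely this last reduction: controlling, through the repeated PBW re-expansions and the secondary clearing of factors $W_{-p}$ modulo $J'$, that every monomial which appears is either of strictly larger $W$-degree than $b$, or of the same $W$-degree and strictly smaller $L_{-p}$-degree. The facts that make it go through are that $[L_{-p},W_{-p}]=0$ and $[L_{-p},L_{-c}]=(c-p)L_{-p-c}$ with index $p+c\neq p$ for $c\neq p$, so that commuting a factor $L_{-p}$ never manufactures a new one; that $w$ has no $W$-degree-zero term; and that Lemma \ref{sub} bounds from below, by its length, the $W$-degree of the $B'$-expansion of a monomial containing $W_{-p}$. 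Should this direct reduction prove too delicate, an alternative is to compute instead the character of the submodule $U(\mathcal{L})\overline{u}$ of $L'(c,h,h_{W})$ generated by the singular vector $\overline{u}$ of weight $h+rp$ and to compare it with $\operatorname*{char}L'(c,h,h_{W})$.
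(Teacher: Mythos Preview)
Your argument is correct and relies on the same ingredients as the paper: linear independence comes straight from Lemma~\ref{njnj}, and spanning comes from the relation $u\equiv 0$ together with $\deg_{L}u=r$ (Lemma~\ref{L-rstupanj}) and the $W_{-p}$--clearing of Lemma~\ref{sub}. The organisation of the spanning half is different, however. The paper builds monomials up from $v$, applying first the $W$-factors, then $L$-factors of index $>p$, then those of index $\leq p$, and inducts on the number of small $L$-factors; you instead run a downward rewriting along the lexicographic preorder on $(\deg_{W},\deg_{L_{-p}})$ (first coordinate reversed). Your packaging makes termination cleaner: the key facts that carry it are exactly the ones you isolate, namely that commuting $L_{-p}$ past factors of $b_{0}$ or of $a$ never manufactures a new $L_{-p}$ (so the ``other'' terms of $YL_{-p}^{r}v$ keep $\deg_{W}=\deg_{W}b$ and drop $\deg_{L_{-p}}$), while every term of $Yw$ lands in $B'$ with strictly larger $W$-degree by Lemma~\ref{sub}.

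One terminological slip: the monomials of $B$ are not the \emph{minimal} elements of your preorder; what you mean (and what you actually use) is that they are exactly the monomials to which your rewriting rule does not apply. Since the preorder is well-founded at each fixed weight, that is all you need. Your fallback suggestion of comparing characters via $U(\mathcal{L})\bar u\subset L'(c,h,h_{W})$ also works, using Corollary~\ref{wkrit} for $L'(c,h+rp,h_{W})$, and is essentially how the paper derives the character after this lemma.
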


\begin{proof}
From Lemma \ref{njnj} follows linear independence of $B$. Next we show that
$a\in V(c,h,h_{W})/U(\mathcal{L})\left\{  u,u^{\prime}\right\}  $ can be
represented in $B$. Since $U(\mathcal{L})\left\{  u,u^{\prime}\right\}  $
contains $J^{\prime}(c,h,h_{W})$, we can represent $a$ in $B^{\prime}$, i.e.,
without $W_{-p}$ as a factor. Suppose $a$ contains monomial $W_{-m_{s}}\cdots
W_{-m_{1}}L_{-n_{t}}\cdots L_{-n_{1}}v$ with $L_{-p}$-degree greater than
$r-1$. We use relation
\begin{equation}
0=u=L_{-p}^{r}v+\sum x_{i},\quad\deg_{L}x_{i}\leq r,\deg_{L_{-p}}x_{i}<r
\label{8}%
\end{equation}
to eliminate every occurrence of a factor $L_{-p}^{r}$.

\textbf{Step 1} Acting with $W_{-m_{s}}\cdots W_{-m_{1}}\ $on (\ref{8}) we
immediately get
\[
W_{-m_{s}}\cdots W_{-m_{1}}L_{-p}^{r}v=\sum W_{-m_{s}}\cdots W_{-m_{1}}x_{i}%
\]
and $L_{-p}^{r}$ can not occur on the right side.

\textbf{Step 2} Let $n_{1}>p$. Acting with $L_{-n_{t}}\cdots L_{-n_{1}}$ on
(\ref{8}) we get
\[
L_{-n_{t}}\cdots L_{-n_{1}}L_{-p}^{r}v=\sum y_{i},\quad\deg_{L_{-p}}y_{i}%
=\deg_{L_{-p}}x_{i}%
\]
because $\left[  L_{-n},L_{-m}\right]  $ can not produce $L_{-p}$ if $n>p$.
Therefore $L_{-p}^{r}$ does not occur on the right side.

\textbf{Step 3} Let $n\leq p$. Acting with $L_{-n}$ on (\ref{8}) we get
\[
L_{-p}^{r}L_{-n}v=\sum z_{i},\quad\deg_{L}z_{i}\leq r+1,\deg_{L_{-p}}z_{i}\leq
r.
\]
the only part that can produce $L_{-p}^{r}$ comes from $L_{-n}wL_{-p}%
^{r-1}L_{-(p-n)}v$ for some $w\in\mathcal{W}$ (because $\deg_{L}u=r$). From
this we get $(p-2n)wL_{-p}^{r}v$ and we may apply Step 1 to eliminate this component.

Proceed by induction. Suppose that for every $j_{1},\ldots,j_{n-1}$, there
exist $u_{i}$, such that $\deg_{L}u_{i}\leq r+n-1$, $\deg_{L_{-p}}u_{i}<r$ and
$L_{-p}^{r}L_{-j_{n-1}}\cdots L_{-j_{1}}v=\sum u_{i}$. Acting with $L_{-j_{n}%
}$ we get
\[
L_{-p}^{r}L_{-j_{n}}\cdots L_{-j_{1}}v=\sum w_{i},\quad\deg_{L}w_{i}\leq
r+n,\deg_{L_{-p}}w_{i}\leq r.
\]
Part on the right side that can produce $L_{-p}^{r}$ has to come from monomial
$L_{-n}wL_{-p}^{r-1}L_{-k_{l}}\cdots L_{-k_{1}}v$ with $l\leq n$ for some
$w\in U(\mathcal{L}_{-})$. From this we get $w^{\prime}L_{-p}^{r}%
L_{-k_{l-1}^{\prime}}\cdots L_{-k_{1}^{\prime}}v$ and by induction this
component can be replaced with a vector from $B$. This completes the proof.
\end{proof}

\begin{theorem}
Let $2h_{W}+\frac{p^{2}-1}{12}c=0$. If $V(c,h,h_{W})$ contains a subsingular
vector $u$ such that $\overline{u}=L_{-p}^{r}v$ for some $r\in\mathbb{N}$,
then $J(c,h,h_{W})=U(\mathcal{L})\left\{  u,u^{\prime}\right\}  $ is the
maximal submodule and module $L(c,h,h_{W})=V(c,h,h_{W})/J(c,h,h_{W})$ is
irreducible with PBW basis
\[
B=\left\{  x=W_{-m_{s}}\cdots W_{-m_{1}}L_{-n_{t}}\cdots L_{-n_{1}}v:m_{j}\neq
p,\deg_{L_{-p}}x<r\right\}
\]
and a character
\begin{gather*}
\operatorname*{char}L(c,h,h_{W})=\\
=q^{h}(1-q^{p})(1-q^{rp})\sum_{n\geq0}P_{2}(n)q^{n}=q^{h}(1-q^{p}%
)(1-q^{rp})\prod\limits_{k\geq1}(1-q^{k})^{-2}.
\end{gather*}

\end{theorem}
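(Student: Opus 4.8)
The plan is to assemble the theorem from the three ingredients already in place: the submodule $U(\mathcal{L})\{u,u'\}$ is contained in the maximal submodule (since $u'$ is singular and $u$ is subsingular, its image $L(c,h,h_W)=V/U(\mathcal{L})\{u,u'\}$ is nonzero and a highest weight module), and conversely we must show it equals the maximal submodule, i.e.\ that $L(c,h,h_W)$ is irreducible. First I would record the character computation: by Lemma \ref{baza2} the set $B$ is a basis for $V(c,h,h_W)/U(\mathcal{L})\{u,u'\}$, and a routine combinatorial count (vectors with no $W_{-p}$ factor and $L_{-p}$-degree $<r$) gives the generating function $q^h(1-q^p)(1-q^{rp})\prod_{k\ge1}(1-q^k)^{-2}$, which is the asserted character. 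This part is bookkeeping once Lemma \ref{baza2} is granted.

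Next I would prove irreducibility. Suppose $L(c,h,h_W)$ is reducible; then it contains a homogeneous singular vector $w\notin\mathbb{C}v$, which we lift to $V(c,h,h_W)$ and, using Lemma \ref{baza2}, represent as a linear combination of vectors from $B$. The argument now mimics the proof of Theorem \ref{w irr} verbatim, but carried out modulo $U(\mathcal{L})\{u,u'\}$ using Lemma \ref{njnj} in place of Lemma \ref{nj} and Lemma \ref{key'} in place of Lemma \ref{key}. Concretely: if $\overline{w}\in\mathcal{W}\setminus\mathbb{C}v$, apply the analogue of Lemma \ref{key} b) to produce some $L_m w$ ($m\neq p$) whose lowest $W$-component contains neither $W_{-p}$ nor a high power of $L_{-p}$, contradicting Lemma \ref{njnj}. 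If $\overline{w}\notin\mathcal{W}$, let $n$ be minimal with $L_{-n}$ a factor of $\overline{w}$; by Lemma \ref{key'} the $W$-degree-$k$ part of $W_n w$ is $n(2h_W+\tfrac{n^2-1}{12}c)\partial\overline{w}/\partial L_{-n}$, forcing $n=p$, and then one inspects the next-smallest index $n'>p$ exactly as in Theorem \ref{w irr} to again contradict Lemma \ref{njnj}. Hence $L_{-t}$ occurring in $\overline{w}$ forces $t=p$. Finally, if $\deg_W\overline{w}=k>0$, pick the largest $m$ with $W_{-m}$ a factor of $\overline{w}$; since $m\neq p$ by Lemma \ref{njnj}, the $W$-degree-$(k-1)$ part of $L_m w$ is $m(2h_W+\tfrac{m^2-1}{12}c)\partial\overline{w}/\partial W_{-m}\neq0$ and contains no forbidden factor, a contradiction. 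So $\overline{w}\in\mathbb{C}L_{-p}^{k}v$ for some $k$; but $\deg_{L_{-p}}$ of every vector in $B$ is $<r$, while $w\in B$-span, so $k<r$, and then $\overline{w}=L_{-p}^k v$ with $k<r$ would be a singular vector of lower weight than $u$ in $L'(c,h,h_W)$ — contradicting that $u$, with $\overline{u}=L_{-p}^r v$, is the lowest-weight singular vector there (Remark \ref{uni} and Lemma \ref{L-rstupanj}). This rules out a singular vector, so $L(c,h,h_W)$ is irreducible and $J(c,h,h_W)=U(\mathcal{L})\{u,u'\}$.

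I expect the main obstacle to be the $\overline{w}\notin\mathcal{W}$ case: one must be careful that passing to the quotient by $U(\mathcal{L})\{u,u'\}$ does not destroy the "lowest $W$-component" bookkeeping, i.e.\ that reducing a monomial containing $W_{-p}$ or $L_{-p}^{\ge r}$ via the relations $u'=0$ and $u=0$ only raises, never lowers, the relevant degrees — this is precisely what Lemmas \ref{sub} and \ref{baza2} (Steps 1--3) are designed to guarantee, so the delicate point is invoking them in the right order and confirming that the leading-term analysis of $W_n w$ and $L_m w$ survives the substitutions. Everything else is a direct transcription of the already-established Theorem \ref{w irr} argument with the primed lemmas, together with the combinatorial character count.
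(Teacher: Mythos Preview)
Your proposal is correct and follows the paper's proof essentially verbatim: invoke Lemma \ref{baza2} for the basis $B$ and the character, then rerun the Theorem \ref{w irr} argument with Lemma \ref{njnj} in place of Lemma \ref{nj} to force $\overline{w}=L_{-p}^{s}v$ with $s<r$. The only correction needed is in the final step: Remark \ref{uni} and Lemma \ref{L-rstupanj} do not assert that $u$ is the \emph{lowest}-weight singular vector in $L'(c,h,h_W)$; the paper instead invokes Theorem \ref{dov}, which pins down $r$ uniquely from $(c,h,h_W,p)$ and hence rules out any subsingular vector with exponent $s\ne r$ --- and $w$ is indeed subsingular, since its weight $h+sp$ lies below $h+rp$ while $U(\mathcal{L})\{u,u'\}/U(\mathcal{L})u'$ is generated by $u$ and hence vanishes in weights $<h+rp$, so $\mathcal{L}_{+}w\subseteq U(\mathcal{L})u'$.
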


\begin{proof}
Let $0\neq x\in V(c,h,h_{W})$ such that $U(\mathcal{L}_{+})x\subseteq
J(c,h,h_{W})$. We may view $x$ as a homogeneous vector presented in $B$.
Analogous to proof of Theorem \ref{w irr}, one can show that $\overline
{x}=\mathbb{C}L_{-p}^{s}v$ for some $s\in\mathbb{N}$. From Lemma \ref{baza2}
we conclude that $s<r$ so we have found $x\in V(c,h,h_{W})$ such that
$U(\mathcal{L}_{+})x\in U(\mathcal{L})u^{\prime}$ and $\overline{x}=L_{-p}%
^{s}v$ for $s<r$. This is a contradiction to Theorem \ref{dov}, proving
irreducibility of $L(v,h,h_{W})$.

Simple combinatorics on $B$ shows that
\begin{gather*}
\operatorname*{char}L(c,h,h_{W})=\\
=q^{h}\left(  \sum_{n\geq0}P_{2}(n)q^{n}-q^{p}\sum_{n\geq0}P_{2}%
(n)q^{n}-q^{rp}\sum_{n\geq0}P_{2}(n)q^{n}+q^{(r+1)p}\sum_{n\geq0}P_{2}%
(n)q^{n}\right) \\
=q^{h}(1-q^{p})(1-q^{rp})\sum_{n\geq0}P_{2}(n)q^{n}=q^{h}(1-q^{p}%
)(1-q^{rp})\prod\limits_{k\geq1}(1-q^{k})^{-2}%
\end{gather*}
so all claims are proved.
\end{proof}

This completes the proof of Theorem \ref{glavni}.

Note that $\operatorname*{char}J(c,h,h_{W})=\operatorname*{char}%
V(c,h,h_{W})-\operatorname*{char}L(c,h,h_{W})$ so
\begin{align*}
\operatorname*{char}J(c,h,h_{W})  &  =q^{h+p}(1+q^{(r-1)p}-q^{rp})\sum
_{n\geq0}P_{2}(n)q^{n}=\\
&  =q^{h+p}(1+q^{(r-1)p}-q^{rp})\prod\limits_{k\geq1}(1-q^{k})^{-2}.
\end{align*}
From there we get
\begin{align*}
\operatorname*{char}J(c,h,h_{W})/J^{\prime}(c,h,h_{W})  &  =q^{h+rp}%
(1-q^{p})\sum_{n\geq0}P_{2}(n)q^{n}=\\
&  =q^{h+rp}(1-q^{p})\prod\limits_{k\geq1}(1-q^{k})^{-2}%
\end{align*}
so
\[
J(c,h,h_{W})/J^{\prime}(c,h,h_{W})\cong L^{\prime}(c,h+rp,h_{W}%
)=L(c,h+rp,h_{W}).
\]
The equation above is due to
\[
h+rp=h_{W}+\frac{(13p+1)(p-1)}{12}+\frac{(1+r)p}{2},r\in\mathbb{N}%
\]
and Corollary \ref{wkrit}.

Since $u\operatorname{mod}J^{\prime}(c,h,h_{W})$ generates $J(c,h,h_{W}%
)/J^{\prime}(c,h,h_{W})$ we get $W_{0}u\in h_{W}u+J^{\prime}(c,h,h_{W})$.

Likewise,
\[
\operatorname*{char}L^{\prime}(c,h,h_{W})/L(c,h+rp,h_{W})=\operatorname*{char}%
L(c,h,h_{W}).
\]

\begin{corollary}
\label{L'}If Verma module $V(c,h,h_{W})$ contains a subsingular vector $u$
such that $\overline{u}=L_{-p}^{r}v$ then the following short sequence is
exact%
\[
0\rightarrow L(c,h+rp,h_{W})\rightarrow L^{\prime}(c,h,h_{W})\rightarrow
L(c,h,h_{W})\rightarrow0.
\]
Since the other two modules are irreducible, $L(c,h+rp,h_{W})$ is the only
nontrivial submodule in $L^{\prime}(c,h,h_{W})$.
\end{corollary}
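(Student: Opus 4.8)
The plan is to build the exact sequence out of the chain $J^{\prime}(c,h,h_{W})\subseteq J(c,h,h_{W})\subseteq V(c,h,h_{W})$ of submodules already constructed, and then to read off uniqueness of the submodule of $L^{\prime}(c,h,h_{W})$ from indecomposability of highest weight modules. The substantive work has been done in the preceding results; this is assembly.

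First I would set $M:=J(c,h,h_{W})/J^{\prime}(c,h,h_{W})$ and note that $J^{\prime}(c,h,h_{W})=U(\mathcal{L})u^{\prime}$ sits inside $J(c,h,h_{W})=U(\mathcal{L})\{u,u^{\prime}\}$, so $M$ is a submodule of $L^{\prime}(c,h,h_{W})=V(c,h,h_{W})/J^{\prime}(c,h,h_{W})$ and, by the third isomorphism theorem,
\[
L^{\prime}(c,h,h_{W})/M\cong V(c,h,h_{W})/J(c,h,h_{W})=L(c,h,h_{W}).
\]
Combined with the identification $M\cong L(c,h+rp,h_{W})$ obtained in the paragraph preceding the statement — which rests on the image of $u$ being a highest weight vector of weight $(c,h+rp,h_{W})$ generating $M$ (here one uses $W_{0}u\in h_{W}u+J^{\prime}(c,h,h_{W})$ and subsingularity of $u$), on the character count $\operatorname*{char}M=q^{h+rp}(1-q^{p})\prod_{k\geq1}(1-q^{k})^{-2}$, and on $h+rp$ lying outside the exceptional family of Corollary \ref{wkrit} so that $L^{\prime}(c,h+rp,h_{W})=L(c,h+rp,h_{W})$ is already irreducible — this immediately yields the short exact sequence $0\to L(c,h+rp,h_{W})\to L^{\prime}(c,h,h_{W})\to L(c,h,h_{W})\to 0$.

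For the final sentence I would use that a highest weight module is indecomposable: its highest weight space is one-dimensional, so its highest weight vector cannot be split between two complementary submodules; in particular $L^{\prime}(c,h,h_{W})$ is indecomposable. Let $N\subseteq L^{\prime}(c,h,h_{W})$ be a nonzero proper submodule. Since $M\cong L(c,h+rp,h_{W})$ is irreducible, $N\cap M$ is either $M$ or $0$. If $N\cap M=M$, then $N\supseteq M$ and $N/M$ is a submodule of the irreducible module $L^{\prime}(c,h,h_{W})/M\cong L(c,h,h_{W})$, so $N/M=0$ (the alternative $N/M=L(c,h,h_{W})$ would give $N=L^{\prime}(c,h,h_{W})$, against properness), whence $N=M$. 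If $N\cap M=0$, then $N$ embeds into $L^{\prime}(c,h,h_{W})/M\cong L(c,h,h_{W})$, so $N\cong L(c,h,h_{W})$; but then the submodule $N\oplus M\subseteq L^{\prime}(c,h,h_{W})$ has character $\operatorname*{char}L(c,h,h_{W})+\operatorname*{char}L(c,h+rp,h_{W})=\operatorname*{char}L^{\prime}(c,h,h_{W})$, forcing $L^{\prime}(c,h,h_{W})=N\oplus M$ and contradicting indecomposability. Hence $M\cong L(c,h+rp,h_{W})$ is the only nontrivial submodule.

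The proof has no real obstacle; the delicate input — existence and shape of $u^{\prime}$ and $u$, the character formulas, and the submodule structure of the relevant Verma modules — is supplied by the earlier results. The one point worth a line of care is verifying that the highest weight $h+rp$ of $M$ falls into the non-exceptional case of Theorem \ref{glavni}(i)/Corollary \ref{wkrit}, via $h+rp=h_{W}+\tfrac{(13p+1)(p-1)}{12}+\tfrac{(1+r)p}{2}$ with $r\in\mathbb{N}$; this is what makes the character match genuinely determine $M$ up to isomorphism rather than merely up to character.
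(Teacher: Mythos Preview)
Your proof is correct and follows essentially the same route as the paper: the exact sequence is assembled from the inclusions $J^{\prime}\subseteq J\subseteq V$ together with the character identification $J/J^{\prime}\cong L(c,h+rp,h_{W})$ established in the paragraph immediately preceding the corollary, and the paper's justification for uniqueness is simply the phrase ``since the other two modules are irreducible.'' You have supplied a careful case analysis via indecomposability where the paper leaves this to the reader, but the underlying reasoning is the same.
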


\bigskip

Next we present examples of singular and subsingular vectors in simplest cases.

Singular vector $u^{\prime}\in\mathcal{W}:\medskip$

$%
\begin{tabular}
[c]{|c|c|}\hline
$h_{W}=0$ & $W_{-1}v$\\\hline
$c=-8h_{W}$ & $\left(  W_{-2}-\frac{3}{4h_{W}}W_{-1}^{2}\right)  v$\\\hline
$c=-3h_{W}$ & $\left(  W_{-3}-\frac{2}{h_{W}}W_{-2}W_{-1}+\frac{1}{h_{W}^{2}%
}W_{-1}^{3}\right)  v$\\\hline
$c=-\frac{8}{5}h_{W}$ & $%
\begin{array}
[c]{c}%
\left(  W_{-4}-\frac{5}{2h_{W}}W_{-3}W_{-1}-\frac{15}{16h_{W}}W_{-2}%
^{2}+\right. \\
\left.  +\frac{125}{32h_{W}^{2}}W_{-2}W_{-1}^{2}-\frac{375}{256h_{W}^{3}%
}W_{-1}^{4}\right)  v
\end{array}
$\\\hline
$c=-h_{W}$ & $%
\begin{array}
[c]{c}%
(W_{-5}-\frac{3}{h_{W}}W_{-4}W_{-1}-\frac{2}{h_{W}}W_{-3}W_{-2}+\frac
{21}{4h_{W}^{2}}W_{-3}W_{-1}^{2}+\\
+\frac{3}{h_{W}^{2}}W_{-2}^{2}W_{-1}+\frac{13}{2h_{W}^{3}}W_{-2}W_{-1}%
^{3}-\frac{39}{20h_{W}^{3}}W_{-1}^{5})v
\end{array}
$\\\hline
\end{tabular}
$

\bigskip Let $r=1$. Then a subsingular vector $u$ is actually a singular
vector on level $p$. By Lemma \ref{L-rstupanj} we can write $u=w_{0}%
v+\sum_{i=1}^{p-1}w_{i}L_{-i}v+L_{-p}v$ where $w_{i}\in U\left(
\mathcal{L}_{-}\right)  _{p-i}$, $\deg_{L}w_{i}=0$. Acting with $W_{p-n}$ we
get recursive relation for $w_{n}$, $n=1,\ldots,p-1$. We use identity
$[W_{n},L_{-n}]v=2h_{W}n\frac{n^{2}-p^{2}}{1-p^{2}}v$ since $2h_{W}%
+\frac{p^{2}-1}{12}c=0$.
\begin{align*}
0  &  =W_{p-1}u=w_{p-1}[W_{p-1},L_{-\left(  p-1\right)  }]v+[W_{p-1}%
,L_{-p}]v=\\
&  =\frac{2p-1}{p+1}2h_{W}w_{p-1}v+\left(  2p-1\right)  W_{-1}v
\end{align*}
so $w_{p-1}=-\frac{p+1}{2h_{W}}W_{-1}$. Next we act with $W_{p-2}$ to get
$w_{p-2}$ and so on. In general:
\[
w_{n}=\frac{p^{2}-1}{2h_{W}n(n^{2}-p^{2})}\left(  \sum_{i=n+1}^{p-1}\left(
n+i\right)  w_{i}W_{-\left(  i-n\right)  }+\left(  n+p\right)  W_{-\left(
p-n\right)  }\right)  .
\]

We assume $W_{-p}v$ does not occur in $u$ (see Remark \ref{uni}).

\newpage

(Sub)singular vector $u$ when $r=1:\medskip$%

\begin{tabular}
[c]{|c|c|}\hline
$h=h_{W}=0$ & $L_{-1}v$\\\hline
$h=h_{W}+\frac{9}{4}$ & $\left(  L_{-2}-\frac{3}{2h_{W}}W_{-1}L_{-1}%
+\frac{12h_{W}+39}{16h_{W}^{2}}W_{-1}^{2}\right)  v$\\\hline
$h=h_{W}+\frac{20}{3}$ & $%
\begin{array}
[c]{c}%
\left(  L_{-3}-\frac{2}{h_{W}}W_{-1}L_{-2}-\frac{2}{h_{W}}W_{-2}L_{-1}%
+\frac{3}{h_{W}^{2}}W_{-1}^{2}L_{-1}\right. \\
\left.  +\frac{58}{3h_{W}^{2}}W_{-2}W_{-1}-\frac{2h_{w}+52}{3h_{W}^{3}}%
W_{-1}^{3}\right)  v
\end{array}
$\\\hline
$h=h_{W}+\frac{53}{4}$ & $%
\begin{array}
[c]{c}%
\left(  L_{-4}-\frac{5}{2h_{W}}W_{-1}L_{-3}-\frac{15}{8h_{W}}W_{-2}%
L_{-2}+\frac{125}{32h_{W}^{2}}W_{-1}^{2}L_{-2}\right. \\
-\frac{5}{2h_{W}}W_{-3}L_{-1}+\frac{125}{16h_{W}^{2}}W_{-2}W_{-1}L_{-1}%
-\frac{375}{64h_{W}^{3}}W_{-1}^{3}L_{-1}\\
+\frac{325+20h_{W}}{8h_{W}^{2}}W_{-3}W_{-1}+\frac{8125}{64h_{W}^{3}}%
W_{-2}W_{-1}^{2}\\
\left.  +\frac{975+60h_{W}}{64h_{W}^{2}}W_{-2}^{2}+\frac{1125}{256h_{W}^{3}%
}\left(  \frac{65}{4h_{W}}+1\right)  W_{-1}^{4}\right)  v
\end{array}
$\\\hline
\end{tabular}

$\medskip$

Using computer, we have found formulas for singular vectors $u$ up to level 8.

\bigskip

Subsingular vectors $u$ in $V(c,\frac{1-r}{2},0):\medskip$%

\begin{tabular}
[c]{|c|c|}\hline
$h=h_{W}=0$ & $L_{-1}v$\\\hline
$h=-\frac{1}{2}$ & $\left(  L_{-1}^{2}+\frac{6}{c}W_{-2}\right)  v$\\\hline
$h=-1$ & $\left(  L_{-1}^{3}+\frac{12}{c}W_{-3}+\frac{24}{c}W_{-2}%
L_{-1}\right)  v$\\\hline
$h=-\frac{3}{2}$ & $\left(  L_{-1}^{4}+\frac{36}{c}W_{-4}+\frac{60}{c}%
W_{-3}L_{-1}+\frac{108}{c^{2}}W_{-2}^{2}+\frac{60}{c}W_{-2}L_{-1}^{2}\right)
v$\\\hline
$h=-2$ & $%
\begin{array}
[c]{c}%
\left(  L_{-1}^{5}+\frac{144}{c}W_{-5}+\frac{48}{c}W_{-4}L_{-1}+\frac
{2304}{c^{2}}W_{-3}W_{-2}+\right. \\
\left.  +\frac{180}{c}W_{-3}L_{-1}^{2}+\frac{3312}{c^{2}}W_{-2}^{2}%
L_{-1}+\frac{120}{c}W_{-2}L_{-1}^{3}\right)  v
\end{array}
$\\\hline
\end{tabular}

\begin{conjecture}
\label{slutnja}Let $u\in V(c,\frac{1-r}{2},0)$ be a subsingular vector. Then
$L_{-k}$ for $k>1$ does not occur as a factor in $u$. Specially,
$W_{0}u,W_{-1}u\in J^{\prime}(c,\frac{1-r}{2},0)$.
\end{conjecture}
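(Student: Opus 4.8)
\bigskip

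\noindent\textbf{Proof plan.} Throughout I assume $c\neq 0$, so that $h_{W}=0$ forces $p=1$ and $u^{\prime}=W_{-1}v$ (the degenerate case $c=0$ would be treated separately); write $h=\tfrac{1-r}{2}$. The assertion is that, in the representative of $u$ lying in the basis $B^{\prime}$ of $L^{\prime}(c,h,0)$, only monomials $W_{-m_{s}}\cdots W_{-m_{1}}L_{-1}^{j}v$ with all $m_{i}\geq 2$ occur. The plan is to recast this in terms of a subalgebra. Set $\mathfrak{a}=\mathbb{C}L_{-1}\oplus\bigoplus_{m\geq 2}\mathbb{C}W_{-m}$; since $[L_{-1},W_{-m}]=(m-1)W_{-(m+1)}$ this is a Lie subalgebra of $\mathcal{L}_{-}$, and by Lemma \ref{baza} the images of the monomials of $U(\mathfrak{a})v$ are among those of $B^{\prime}$, hence linearly independent in $L^{\prime}(c,h,0)$; let $X^{\prime}$ be their span. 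The goal becomes $u\in X^{\prime}$.

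The structural fact I would establish first is that $W_{n}$ annihilates $X^{\prime}$ for every $n\geq -1$. For $n=-1$ this is because $W_{-1}$ commutes with $L_{-1}$ and with every $W_{-m}$, so $W_{-1}U(\mathfrak{a})v=U(\mathfrak{a})W_{-1}v\subseteq U(\mathcal{L})u^{\prime}=J^{\prime}(c,h,0)$; for $n\geq 0$ one moves $W_{n}$ to the right past the $W_{-m_{i}}$'s and checks (using $h_{W}=0$, hence $W_{0}v=0$) that $W_{n}L_{-1}^{j}v$ is either $0$ or a multiple of $W_{-1}L_{-1}^{j-n-1}v=L_{-1}^{j-n-1}u^{\prime}\in J^{\prime}(c,h,0)$. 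One similarly checks $L_{n}X^{\prime}\subseteq X^{\prime}$ for $n\geq -1$ and $W_{-m}X^{\prime}\subseteq X^{\prime}$ for $m\geq 2$, so that $X^{\prime}$ is exactly the Verma module over $\bar{\mathfrak{g}}:=\langle L_{n}:n\geq -1\rangle\oplus\bigoplus_{m\geq 2}\mathbb{C}W_{-m}\oplus\mathbb{C}C$ generated by $v$; but only the vanishing $W_{n}|_{X^{\prime}}=0$ for $n\geq -1$ is used below.

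Next I would feed in what is already known about $u$. As $u$ is singular in $L^{\prime}(c,h,0)$ we have $W_{n}u=0$ there for $n\geq 1$; the identity $W_{0}u\in h_{W}u+J^{\prime}(c,h,0)$ recorded above, with $h_{W}=0$, gives $W_{0}u=0$ in $L^{\prime}(c,h,0)$; and $W_{-1}u=0$ in $L^{\prime}(c,h,0)$ follows from Corollary \ref{L'}, since the submodule $U(\mathcal{L})u$ is isomorphic to $L^{\prime}(c,h+rp,h_{W})$, in whose defining quotient the highest weight vector is killed by $W_{-1}$. Now write $u=\xi+\eta$ with $\xi\in X^{\prime}$ and $\eta$ a combination of $B^{\prime}$-monomials each containing some factor $L_{-k}$ with $k\geq 2$ (so $\eta$ is the part to be shown to vanish). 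Since $W_{n}\xi=0$ for $n\geq -1$, the relations above give $W_{n}\eta=0$ in $L^{\prime}(c,h,0)$ for all $n\geq -1$. Finally I would argue by contradiction: let $\overline{\eta}$ be the lowest $W$-degree component of $\eta$. On a $B^{\prime}$-monomial containing a factor $L_{-k}$ with $k\geq 2$, $W_{-1}$ raises the $W$-degree by exactly $1$ (it turns one such factor into $(k-1)W_{-(k+1)}$, up to PBW-reordering corrections), so $W_{-1}\overline{\eta}$ is the lowest component of $W_{-1}\eta=0$ and hence is $0$; but a leading-term analysis of $W_{-1}\overline{\eta}$, with respect to a suitable total order on $B^{\prime}$-monomials, shows $W_{-1}\overline{\eta}\neq 0$ as soon as $\overline{\eta}\neq 0$. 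Hence $\eta=0$, i.e. $u=\xi\in X^{\prime}$, which is the first assertion of the conjecture.

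The hard part will be this last leading-term analysis. The subtlety is that $W_{-1}$ acting on a monomial $W_{-m_{s}}\cdots W_{-m_{1}}L_{-n_{t}}\cdots L_{-n_{1}}v$ with several $L$-factors of index $\geq 2$ produces, besides the ``main'' term $L_{-k}\mapsto (k-1)W_{-(k+1)}$, a cascade of corrections arising when the new negative mode is reordered into PBW position past the remaining $L$-factors (including the $L_{-1}$'s), and a priori these could cancel the main contributions of different monomials of $\overline{\eta}$. One must choose a total order on $B^{\prime}$-monomials — refining, roughly, the multiset of $L$-indices that are $\geq 2$, then the multiset of $W$-indices, then the multiplicity of $L_{-1}$ — fine enough that the main term of the maximal monomial occurring in $\overline{\eta}$ is matched by no cascade contribution, forcing its coefficient to vanish and contradicting maximality. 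Once $u\in X^{\prime}$ is known, the ``Specially'' clause is immediate: $u$ is then a polynomial in $L_{-1}$ and the $W_{-m}$ ($m\geq 2$) applied to $v$, and $W_{-1}$ commutes with all of these, so $W_{-1}u=(\text{that polynomial})\,W_{-1}v\in U(\mathcal{L}_{-})u^{\prime}=J^{\prime}(c,\tfrac{1-r}{2},0)$; while $W_{0}u\in J^{\prime}(c,\tfrac{1-r}{2},0)$ is, once more, the identity $W_{0}u\in h_{W}u+J^{\prime}$ with $h_{W}=0$.
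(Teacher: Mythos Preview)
The statement you are attempting is labeled \textbf{Conjecture} in the paper; the paper gives no proof, only a table of examples for small $r$. So there is nothing to compare your argument against. Let me instead assess your proposal on its own.

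Your treatment of the ``Specially'' clause is correct and, importantly, does not depend on the main assertion at all. The identity $W_{0}u\in h_{W}u+J^{\prime}(c,h,h_{W})$ is derived in the paper just before Corollary~\ref{L'}, and with $h_{W}=0$ it gives $W_{0}u\in J^{\prime}$. For $W_{-1}u$, your use of Corollary~\ref{L'} is valid: since $p=1$ the submodule generated by the image of $u$ in $L^{\prime}(c,h,0)$ is isomorphic to $L(c,h+r,0)$, in which $W_{-1}$ annihilates the highest weight vector (because $u^{\prime}=W_{-1}v$ there). So the second sentence of the conjecture is in fact a theorem, provable from what is already in the paper; you have observed this correctly.

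For the first sentence, your setup is sound. The subspace $X^{\prime}=U(\mathfrak a)v$ with $\mathfrak a=\mathbb{C}L_{-1}\oplus\bigoplus_{m\ge2}\mathbb{C}W_{-m}$ is well-defined in $L^{\prime}(c,h,0)$, and your verification that $W_{n}X^{\prime}=0$ in $L^{\prime}$ for all $n\ge-1$ is correct (the falling factorials in the expansion of $W_{n}L_{-1}^{j}v$ vanish before any $W_{-m}$ with $m\ge2$ can appear). You also correctly observe that on any $B^{\prime}$-monomial containing some $L_{-k}$ with $k\ge2$, the operator $W_{-1}$ raises $W$-degree by exactly one in $L^{\prime}$, so $W_{-1}\overline{\eta}$ really is the lowest $W$-degree component of $W_{-1}\eta$.

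The gap is exactly where you say it is: the injectivity of $W_{-1}$ on the span of $B^{\prime}$-monomials (of fixed $W$-degree) containing at least one $L_{-k}$ with $k\ge2$. Your sketch of a monomial order is too vague to constitute a proof. The difficulty is real: when $W_{-1}$ hits $W_{\mu}L_{\lambda}v$, the resulting $W$-prefix becomes $\mu\cup\{\lambda_{i}+1\}$ for various $i$, and distinct pairs $(\mu,\lambda_{i})$ can and do produce the same enlarged $W$-partition, so cancellation between different input monomials is not ruled out by any obvious grading. A workable approach would be to filter first by $L$-degree (which $W_{-1}$ lowers by at least one, and by exactly one on the ``top'' terms), reducing to injectivity of the simpler map
\[
W_{\mu}L_{\lambda}v\;\longmapsto\;\sum_{i:\lambda_{i}\ge2}(\lambda_{i}-1)\,W_{\mu\cup\{\lambda_{i}+1\}}L_{\lambda\setminus\{\lambda_{i}\}}v,
\]
and then argue injectivity of this combinatorial map on partitions. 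This looks plausible (and checks out in low weights), but it is not established by what you wrote. Until that step is completed, the first sentence of the conjecture remains open.
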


\bigskip

Subsingular vector $u$ in $V(-8h_{W},h_{W}+\frac{5}{4},h_{W})$ ($p=r=2$ case):%
\begin{gather*}
\left(  L_{-2}^{2}-\frac{3}{4h_{W}}W_{-4}-\left(  \frac{3}{2h_{W}^{2}}%
+\frac{3}{2h_{W}}\right)  W_{-3}W_{-1}+\frac{3}{2h_{W}}W_{-3}L_{-1}-\frac
{3}{2h_{W}}W_{-1}L_{-3}\right. \\
-\frac{3}{h_{W}}W_{-1}L_{-2}L_{-1}+\left(  \frac{3}{2h_{W}}+\frac{39}%
{4h_{W}^{2}}\right)  W_{-1}^{2}L_{-2}+\frac{9}{4h_{W}^{2}}W_{-1}^{2}L_{-1}%
^{2}+\\
\left.  -\left(  \frac{9}{4h_{W}^{2}}+\frac{117}{8h_{W}^{3}}\right)
W_{-1}^{3}L_{-1}+\left(  \frac{135}{32h_{W}^{4}}+\frac{153}{32h_{W}^{3}}%
+\frac{9}{8h_{W}^{2}}\right)  W_{-1}^{4}\right)  v
\end{gather*}

\bigskip

Based on these examples, we state

\begin{conjecture}
\label{slut}Suppose $2h_{W}+\frac{p^{2}-1}{12}c=0$ for some $p\in\mathbb{N}$.
Then $L^{\prime}(c,h,h_{W})$ is reducible if and only if $h=h_{W}%
+\frac{(13p+1)(p-1)}{12}+\frac{(1-r)p}{2}$.
\end{conjecture}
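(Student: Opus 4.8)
We first note that one implication is already in hand. If $L^{\prime}(c,h,h_{W})$ is reducible, then Theorem \ref{w irr} produces a singular vector $u\in L^{\prime}(c,h,h_{W})$ with $\overline{u}=L_{-p}^{r}v$ for some $r\in\mathbb{N}$; regarding $u$ as a subsingular vector of $V(c,h,h_{W})$ and applying Theorem \ref{dov} forces $h=h_{W}+\frac{(13p+1)(p-1)}{12}+\frac{(1-r)p}{2}$. So the whole content of the conjecture is the converse: for each fixed $r\in\mathbb{N}$, when $h$ takes the stated value one must exhibit a nonzero singular vector $u\in L^{\prime}(c,h,h_{W})_{h+rp}$ with $\overline{u}=L_{-p}^{r}v$.

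The plan is to construct such a $u$ by the descending $W$-degree bookkeeping used throughout this section. Normalize $\overline{u}=L_{-p}^{r}v$ and write $u=\sum_{j\geq0}\overline{u_{j}}$ with $\overline{u_{0}}=L_{-p}^{r}v$; rerunning the argument of Lemma \ref{L-rstupanj} as an ansatz confines the search to $\deg_{L}u=r$, and since $L^{\prime}(c,h,h_{W})_{h+rp}$ is finite dimensional the sum is automatically finite. Because $L_{1},L_{2},W_{1},W_{2}$ generate $\mathcal{L}_{+}$ as a Lie algebra and $J^{\prime}(c,h,h_{W})$ is a submodule, $u$ is singular in $L^{\prime}(c,h,h_{W})$ precisely when $W_{1}u\equiv W_{2}u\equiv L_{1}u\equiv L_{2}u\equiv0\pmod{J^{\prime}(c,h,h_{W})}$; reducing these congruences modulo the basis $B^{\prime}$ (Lemma \ref{baza}, via Lemma \ref{sub}) turns them into a homogeneous linear system in the coefficients of $u$, with entries polynomial in $c,h,h_{W}$ under the standing relation $2h_{W}+\frac{p^{2}-1}{12}c=0$. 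Lemma \ref{key'} reads off the top $W$-degree part of each $W_{n}u$ and $L_{m}u$ in terms of the partial derivatives $\frac{\partial\overline{u_{j}}}{\partial L_{-n}}$, so the system can be solved stagewise: $\overline{u_{1}}$ from $\overline{u_{0}}$, then $\overline{u_{2}}$ from $\overline{u_{0}},\overline{u_{1}}$, and so on, the only threat to solvability at each stage being the vanishing of a scalar built from $c,h,h_{W},p,r$. The derivation preceding Theorem \ref{dov} already exhibits one such scalar (the coefficient of $L_{-p}^{r-1}v$ in $L_{p}u$, after the auxiliary equations $L_{i}u\equiv0$ pin down the $\lambda_{i}$), and the assertion to be proved is that the single condition on $h$ clears every obstruction simultaneously.

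That last point is the genuine obstacle: there is no formal reason why one linear condition on $h$ should render an entire system consistent, so one needs either an explicit closed form for $u$ or a conceptual multiplicity count. I would first try the explicit route: the $r=1$ recursion $w_{n}=\frac{p^{2}-1}{2h_{W}n(n^{2}-p^{2})}\bigl(\sum_{i=n+1}^{p-1}(n+i)w_{i}W_{-(i-n)}+(n+p)W_{-(p-n)}\bigr)$ together with the family of subsingular vectors of $V(c,\frac{1-r}{2},0)$ and the $p=r=2$ example displayed above strongly suggest a uniform normal-ordered formula for $u$ (with $p=1$ being particularly simple, cf.\ Conjecture \ref{slutnja}), which one would then verify annihilates $\mathcal{L}_{+}$ modulo $J^{\prime}(c,h,h_{W})$ by a generating-function identity. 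Failing that, the conceptual route is to compute the determinant of the contravariant (Shapovalov) form of $V(c,h,h_{W})$ on the weight $h+rp$ under the hypothesis $2h_{W}+\frac{p^{2}-1}{12}c=0$, and then use the embedding $J^{\prime}(c,h,h_{W})\cong V(c,h+p,h_{W})$ from Theorem \ref{u'}, the rigidity statement that every singular vector of $L^{\prime}(c,h,h_{W})$ has leading term $L_{-p}^{r}v$ (Theorem \ref{w irr}), and Corollary \ref{L'}, to match a zero of this determinant --- counted with the multiplicity supplied by the $W$-grading estimates of Lemma \ref{key'} --- against the stated value of $h$. Either way the combinatorial heart of the matter is controlling all of the $W$-degree-$\geq1$ components of $u$ at once, which is exactly what the derivation of Theorem \ref{dov} sidestepped rather than resolved.
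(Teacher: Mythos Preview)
The statement you are attempting to prove is a \emph{conjecture} in the paper, not a theorem: the paper offers no proof of the converse direction, only the necessary condition (Theorem~\ref{dov}), a collection of low-level examples, and the explicit remark ``We suspect $L^{\prime}(c,h,h_{W})$ is reducible if and only if \ldots''. So there is no argument in the paper to compare your proposal against.

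Your proposal correctly isolates the one direction that is already established and correctly identifies where the difficulty lies in the converse. But what you have written is a programme, not a proof: you set up the linear system for the coefficients of $u$, observe that Lemma~\ref{key'} lets you triangularize it by $W$-degree, and then concede that you do not know why the single scalar condition on $h$ clears all obstructions rather than just the first one. Both routes you suggest (a closed-form $u$ verified by a generating-function identity, or a Shapovalov-determinant multiplicity count) are reasonable lines of attack, but neither is carried out, and neither is known to succeed in the literature cited. In particular, the $r=1$ recursion you quote determines $u$ completely but does not by itself verify that the resulting vector is annihilated by all of $\mathcal{L}_{+}$ modulo $J'(c,h,h_{W})$; and the determinant approach would require an explicit Kac-type determinant formula for $W(2,2)$ Verma modules, which the paper does not supply. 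So your write-up accurately diagnoses why the statement remains open, but it does not close the gap.
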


\begin{remark}
Let $2h_{W}+\frac{p^{2}-1}{12}c=0$. Then $U(\mathcal{L})u^{\prime
}=V(c,h+p,h_{W})\subseteq V(c,h,h_{W})$. However, $V(c,h+p,h_{W})$ also
contains a singular vector $(u^{\prime})^{2}$ such that $U(\mathcal{L}%
)(u^{\prime})^{2}=V(c,h+2p,h_{W})$ and so on. Therefore,
\[
\cdots\supseteq V(c,h,h_{W})\supseteq V(c,h+p,h_{W})\supseteq\cdots\supseteq
V(c,h+kp,h_{W})\supseteq\cdots
\]
If $h\neq h_{W}+\frac{(13p+1)(p-1)}{12}+\frac{(1-r)p}{2}$ for all
$n\in\mathbb{Z}$, then all of the subquotients $V(c,h+kp,h_{W}%
)/V(c,h+(k+1)p,h_{W})$ are irreducible modules $L(c,h+kp,h_{W}) $.
\end{remark}

\section{Vertex operator algebra associated to $W(2,2)$ and intertwining
operators\label{VOA}}

Vertex operator algebras (VOAs) are a fundamental class of algebraic
structures which have arisen in mathematics and physics a few decades ago. The
notion of VOA arose from the problem of realizing the monster sporadic group
as a symmetry group of a certain infinite-dimensional vector space (see
\cite{Frenkel - Lepowsky - Meurman}). An interested reader should consult
\cite{Frenkel - Huang - Lepowsky} or \cite{Lepowsky-Li} for a detailed
approach. Here we present only basic definitions which should suffice our needs.

For any algebraic expression $z$ we set $\delta\left(  z\right)  =\sum
_{n\in\mathbb{Z}}z^{n}$ provided that this sum makes sense. This is the formal
analogue of the $\delta$-distribution at $z=1$; in particular, $\delta\left(
z\right)  f\left(  z\right)  =\delta\left(  z\right)  f\left(  1\right)  $ for
any $f$ for which these expressions are defined.

\begin{definition}
A \textbf{vertex operator algebra} $\left(  V,Y,\mathbf{1}\right)  $ is a
$\mathbb{Z}$-graded vector space (graded by \textit{weights}) $V=\bigoplus
\limits_{n\in\mathbb{Z}}V_{\left(  n\right)  }$ such that $\dim V_{\left(
n\right)  }<\infty$ for $n\in\mathbb{Z}$ and $V_{\left(  n\right)  }=0$ for
$n$ sufficiently small, equipped with a linear map $V\otimes V\rightarrow
V\left[  \left[  z,z^{-1}\right]  \right]  $, or equivalently,
\begin{align*}
V  &  \rightarrow\left(  \operatorname*{End}V\right)  \left[  \left[
z,z^{-1}\right]  \right] \\
v  &  \mapsto Y\left(  v,z\right)  =\sum_{n\in\mathbb{Z}}v_{n}z^{-n-1}\text{
(where }v_{n}\in\operatorname*{End}V\text{),}%
\end{align*}
$Y\left(  v,z\right)  $ denoting the \textit{vertex operator associated} with
$v$, and equipped with two distinguished homogeneous vectors $\mathbf{1}$ (the
\textit{vacuum}) and $\omega\in V$. The following conditions are assumed for
$u,v\in V:$%
\[
u_{n}v=0\text{ for }n\text{ sufficiently large;}%
\]%
\[
Y\left(  \mathbf{1},z\right)  =1\left(  =\operatorname*{id}\nolimits_{V}%
\right)  ;
\]
the \textit{creation property} holds:
\[
Y\left(  v,z\right)  \mathbf{1}\in V\left[  \left[  z\right]  \right]  \text{
and }\lim_{z\rightarrow0}Y\left(  v,z\right)  \mathbf{1}=0
\]
(that is, $Y\left(  v,z\right)  \mathbf{1}$ involves only nonnegative integral
powers of $z$ and the constant term is $v$); the \textit{Jacobi identity}:
\begin{align}
&  z_{0}^{-1}\delta\left(  \frac{z_{1}-z_{2}}{z_{0}}\right)  Y(u,z_{1}%
)Y(v,z_{2})-z_{0}^{-1}\delta\left(  \frac{z_{2}-z_{1}}{-z_{0}}\right)
Y(v,z_{2})Y(u,z_{1})\label{jacobi}\\
&  =z_{2}^{-1}\delta\left(  \frac{z_{1}-z_{0}}{z_{2}}\right)  Y(Y(u,z_{0}%
)v,z_{2});\nonumber
\end{align}
the Virasoro algebra relations:
\[
\left[  L_{m},L_{n}\right]  =\left(  m-n\right)  L_{m+n}+\frac{m^{3}-m}%
{12}\delta_{m+n,0}\left(  \text{rank }V\right)
\]
for $m,n\in\mathbb{Z}$, where
\[
L_{n}=\omega_{n+1}\text{ for }n\in\mathbb{Z}\text{, i.e., }Y\left(
\omega,z\right)  =\sum_{n\in\mathbb{Z}}L_{n}z^{-n-2}%
\]
and rank~$V\in\mathbb{C}$, $L_{0}v=nv=\left(  wt\ v\right)  v$ for
$n\in\mathbb{Z}$ and $v\in V_{\left(  n\right)  };$
\[
\frac{d}{dz}Y\left(  v,z\right)  =Y\left(  L_{-1}v,z\right)
\]
(the $L_{-1}$-derivative property).
\end{definition}

\begin{definition}
Given a VOA $\left(  V,Y,\mathbf{1}\right)  ,$ a \textbf{module} $\left(
W,\mathcal{Y}\right)  $ for $V$ is a $\mathbb{Q}$-graded vector space
$W=\bigoplus\limits_{n\in\mathbb{Q}}W_{\left(  n\right)  }$ such that $\dim
W_{\left(  n\right)  }<\infty$ for $n\in\mathbb{Q}$, and $W_{\left(  n\right)
}=0$ for $n$ sufficiently small, equipped with a linear map $V\otimes
W\rightarrow W\left[  \left[  z,z^{-1}\right]  \right]  $, or equivalently,
\begin{align*}
V  &  \rightarrow\left(  \operatorname*{End}W\right)  \left[  \left[
z,z^{-1}\right]  \right] \\
v  &  \mapsto\mathcal{Y}\left(  v,z\right)  =\sum_{n\in\mathbb{Z}}%
v_{n}z^{-n-1}\text{ (where }v_{n}\in\operatorname*{End}W\text{),}%
\end{align*}
$\mathcal{Y}\left(  v,z\right)  $ denoting the \textit{vertex operator
associated} with $v$. The Virasoro algebra relations hold on $W$ with scalar
equal to rank $V$:
\[
\left[  L_{m},L_{n}\right]  =\left(  m-n\right)  L_{m+n}+\frac{m^{3}-m}%
{12}\delta_{m+n,0}\left(  \text{rank }V\right)
\]
for $m,n\in\mathbb{Z}$, where
\[
L_{n}=\omega_{n+1}\text{ for }n\in\mathbb{Z}\text{, i.e., }\mathcal{Y}\left(
\omega,z\right)  =\sum_{n\in\mathbb{Z}}L_{n}z^{-n-2}%
\]
$L_{0}w=nw$ for $n\in\mathbb{Q}$ and $w\in W_{\left(  n\right)  }.$ For
$u,v\in V$ and $w\in W$ the following properties hold:

\begin{description}
\item[(i)] \textit{Truncation property}: $v_{n}w=0$ for $n$ sufficiently large
and $Y\left(  \mathbf{1},z\right)  =1$;

\item[(ii)] $L_{-1}$-derivative property
\[
\frac{d}{dz}\mathcal{Y}\left(  v,z\right)  =\mathcal{Y}\left(  L_{-1}%
v,z\right)
\]

\item[(iii)] The Jacobi identity
\begin{align*}
&  z_{0}^{-1}\delta\left(  \frac{z_{1}-z_{2}}{z_{0}}\right)  \mathcal{Y}%
(u,z_{1})\mathcal{Y}(v,z_{2})-z_{0}^{-1}\delta\left(  \frac{z_{2}-z_{1}%
}{-z_{0}}\right)  \mathcal{Y}(v,z_{2})\mathcal{Y}(u,z_{1})\\
&  =z_{2}^{-1}\delta\left(  \frac{z_{1}-z_{0}}{z_{2}}\right)  \mathcal{Y}%
(Y(u,z_{0})v,z_{2});
\end{align*}
($Y\left(  u,z_{0}\right)  $ is the operator associated with $V$).
\end{description}
\end{definition}

A VOA\ $V$ is said to be \textbf{rational} if $V$ has only finitely many
irreducible modules and every finitely generated module is a direct sum of irreducibles.

\begin{definition}
Let $V=(V,Y,\mathbf{1})$ be a vertex operator algebra, and $\left(
W_{i},Y_{i}\right)  $, $i=1,2,3$ three (not necessarily distinct) $V$-modules.
\textbf{Intertwining operator} of type $\binom{W_{3}}{W_{1}\text{\quad}W_{2}}$
is a linear map $W_{1}\otimes W_{2}\rightarrow W_{3}\{z\}=\left\{  \sum
_{n\in\mathbb{Q}}u_{n}z^{n}:u_{n}\in W_{k}\right\}  $, or equivalently,
\begin{align*}
W_{1}  &  \rightarrow\left(  \operatorname*{Hom}\left(  W_{2},W_{3}\right)
\right)  \left\{  z\right\} \\
w  &  \mapsto\mathcal{I}\left(  w,z\right)  =\sum_{n\in\mathbb{Q}}%
w_{n}z^{-n-1}\text{, with }w_{n}\in\operatorname*{Hom}\left(  W_{2}%
,W_{3}\right)  .
\end{align*}
For any $v\in V$, $u\in W_{1}$, $w\in W_{2}$ the following conditions are satisfied:

\begin{description}
\item[(i)] \textit{Truncation property} - $u_{n}v=0$ for $n$ sufficiently large;

\item[(ii)] $L_{-1}$\textit{-derivative property} - $\mathcal{I}%
(L_{-1}u,z)=\frac{d}{dz}\mathcal{I}(u,z)$;

\item[(iii)] The \textit{Jacobi identity}
\begin{align*}
&  z_{0}^{-1}\delta\left(  \frac{z_{1}-z_{2}}{z_{0}}\right)  Y_{3}%
(v,z_{1})\mathcal{I}(u,z_{2})w-z_{0}^{-1}\delta\left(  \frac{z_{2}-z_{1}%
}{-z_{0}}\right)  \mathcal{I}(u,z_{2})Y_{2}(v,z_{1})w\\
&  =z_{2}^{-1}\delta\left(  \frac{z_{1}-z_{0}}{z_{2}}\right)  \mathcal{I}%
(Y_{1}(v,z_{0})u,z_{2})w.
\end{align*}

\end{description}
\end{definition}

\bigskip

Let $c\neq0$. It was shown in Zhang-Dong \cite{Zhang-Dong} that $L(c,0,0)$ is
the only quotient of $V(c,0,0)$ with the structure of a vertex operator
algebra (VOA). This algebra is always irrational and all of its irreducible
representations are known:

\begin{theorem}
[\cite{Zhang-Dong}]Let $c\neq0$. Then

\begin{enumerate}
\item There is a unique VOA structure on $L(c,0,0)$ with the vacuum vector $v
$, and the Virasoro element $\omega=L_{-2}v$. $L(c,0,0)$ is generated with
$\omega$ and $x=W_{-2}v$ and $Y\left(  \omega,z\right)  =\sum_{n\in\mathbb{Z}%
}L_{n}z^{-n-2}$, $Y(W_{-2}v,z)=\sum_{n\in\mathbb{Z}}W_{n}z^{-n-2}$.

\item Any quotient module of $V(c,h,h_{W})$ is an $L(c,0,0)$-module, and
$\{L(c,h,h_{W}):h,h_{W}\in\mathbb{C\}}$ gives a complete list of irreducible
$L(c,0,0)$-modules up to isomorphism.
\end{enumerate}
\end{theorem}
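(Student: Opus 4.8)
The plan is to reduce everything about $L(c,0,0)$-modules to the Verma-module theory developed in Section~\ref{Verma}, using the general machinery of VOAs. For part~(1), I would first recall that the Verma module $V(c,0,0)$ has a unique maximal submodule $J(c,0,0)$ and a unique irreducible quotient $L(c,0,0)$; the content of the claim is that this quotient admits a VOA structure. The existence is obtained by putting on $V(c,0,0)$ the canonical field $Y(\omega,z)=\sum L_n z^{-n-2}$ with $\omega=L_{-2}v$ and $Y(W_{-2}v,z)=\sum W_n z^{-n-2}$, checking the Jacobi identity for these two generating fields via the commutation relations \eqref{1} (a standard vertex-operator-calculus computation, as in the Frenkel–Kac–Radul–Wang type constructions for $\widehat{\mathfrak{sl}}_2$ and Virasoro), and then passing to the quotient $L(c,0,0)$; one must check that $J(c,0,0)$ is a VOA-ideal, which follows because it is an $\mathcal L$-submodule and the modes of $Y(\omega,z)$ and $Y(W_{-2}v,z)$ are exactly the $L_n,W_n$. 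Uniqueness of the VOA structure on $L(c,0,0)$ with vacuum $v$ and conformal vector $\omega=L_{-2}v$ follows from the fact that a VOA is generated by fields from its weight-$1$ and weight-$2$ subspaces in our situation, and the $L_{-1}$-derivative property forces $Y(L_{-2}v,z)$ and $Y(W_{-2}v,z)$ to have the stated form, so all remaining vertex operators are determined by the reconstruction (``associativity'') theorem.

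For part~(2), the first step is to observe that if $M$ is any quotient of $V(c,h,h_W)$, then $M$ carries an action of the Lie algebra $\mathcal L$, hence of all the operators $L_n,W_n,C$; packaging these into $\mathcal Y(\omega,z)=\sum L_n z^{-n-2}$ and $\mathcal Y(W_{-2}v,z)=\sum W_n z^{-n-2}$ and extending to all of $L(c,0,0)$ by the same reconstruction procedure as in part~(1) gives $M$ the structure of an $L(c,0,0)$-module; the Jacobi identity for the module is inherited from the one on $V(c,0,0)$ because it is a formal identity in the modes which only uses the bracket relations \eqref{1}. Conversely, given any irreducible $L(c,0,0)$-module $W$, the modes of $\mathcal Y(\omega,z)$ and $\mathcal Y(W_{-2}v,z)$ generate a copy of $\mathcal L$ acting on $W$, and since $W$ is $\mathbb Q$-graded with finite-dimensional graded pieces bounded below, $L_0$ acts semisimply with spectrum bounded below; a lowest-$L_0$-weight vector $w_0$ is then annihilated by all $L_n,W_n$ with $n>0$ and is a $C$-eigenvector with eigenvalue $c$, so it is a highest-weight vector of some weight $(c,h,h_W)$. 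Hence $W$ is a quotient of $V(c,h,h_W)$, and being irreducible it is $\cong L(c,h,h_W)$.

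The last point is that the modules $L(c,h,h_W)$ for distinct $(h,h_W)$ are pairwise non-isomorphic (obvious from the $L_0$- and $W_0$-action on the highest-weight line, using that $W_0$ acts as the scalar $h_W$ there) and that every such $L(c,h,h_W)$ genuinely satisfies the module axioms — in particular the $\mathbb Q$-grading and finite-dimensionality of graded pieces, which hold because the graded dimensions are bounded above by $\operatorname{char}V(c,h,h_W)=q^h\prod_{k\ge1}(1-q^k)^{-2}$, and the grading can be shifted so that weights lie in $h+\mathbb Z_{\ge0}$. The main obstacle is the VOA reconstruction/associativity step: one must verify that the two generating fields are mutually local and that iterated normal-ordered products of them close up to give a well-defined vertex-operator map on all of $L(c,0,0)$ (equivalently, verify the Jacobi identity for the full $Y$, not just the generators). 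This is where the $W(2,2)$ relations \eqref{1} — in particular the somewhat unusual bracket $[L_n,W_m]=(n-m)W_{n+m}+\delta_{n,-m}\frac{n^3-n}{12}C$ with the central term landing on $C$ rather than being zero — must be used carefully; but this is exactly the computation carried out in \cite{Zhang-Dong}, so for the purposes of this paper I would cite it and only indicate the reduction above.
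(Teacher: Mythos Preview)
The paper does not prove this theorem at all: it is stated as a result imported verbatim from \cite{Zhang-Dong}, with no argument given in the present paper. So there is nothing to compare your proposal against here; your final paragraph already arrives at the correct conclusion, namely that one should simply cite \cite{Zhang-Dong}.

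For what it is worth, your sketch is a reasonable outline of how such a result is established, though the actual proof in \cite{Zhang-Dong} proceeds via Zhu's associative algebra $A(L(c,0,0))$ rather than by directly extracting a highest-weight vector from an arbitrary irreducible module: they compute $A(L(c,0,0))\cong\mathbb C[x,y]$ (with $x,y$ the images of $\omega$ and $W_{-2}v$), so that irreducible $A(L(c,0,0))$-modules are one-dimensional and parametrised by pairs $(h,h_W)$, and then invoke Zhu's bijection between these and irreducible admissible $L(c,0,0)$-modules. Your ``find a lowest-$L_0$-weight vector'' argument reaches the same conclusion and is more elementary, but the Zhu-algebra route is what makes the classification clean and is what \cite{Zhang-Dong} actually do. In any case, none of this belongs in the present paper beyond the citation.
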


Now we present a realization of intermediate series via intertwining operators
for $L(c,0,0)$-modules.

Let $M(c,h,h_{W})$ denote a highest weight module. Suppose a nontrivial
intertwining operator $\mathcal{I}$ of type $\binom{M(c,h_{3}h_{W}^{\prime}%
)}{L^{\prime}(c,h_{1},0)\text{\quad}M(c,h_{2},h_{W})}$ exists. Let $h_{1}\neq0
$ and $v\in L^{\prime}(c,h_{1},0)$ the highest weight vector. Let
$\mathcal{I}(v,z)=z^{-\alpha}\sum_{n\in\mathbb{Z}}v_{(n)}z^{-n-1}$ for
$\alpha=h_{1}+h_{2}-h_{3}$. Recall that $W_{0}v=W_{-1}v=0$. Then
\begin{align*}
\left[  L_{m},v_{(n)}\right]   &  =\sum_{i\geq0}\binom{m+1}{i}\left(
L_{i-1}v\right)  _{\left(  m+n-i+1\right)  }=\\
&  =\left(  L_{-1}v\right)  _{\left(  m+n+1\right)  }+\left(  m+1\right)
\left(  L_{0}v\right)  _{\left(  m+n\right)  }=\\
&  =-\left(  \alpha+n+m+1\right)  v_{\left(  m+n\right)  }+\left(  m+1\right)
h_{1}v_{\left(  m+n\right)  }=\\
&  =-\left(  n+\alpha+\left(  1+m\right)  \left(  1-h_{1}\right)  \right)
v_{\left(  m+n\right)  }%
\end{align*}
and
\begin{align*}
\left[  W_{m},v_{(n)}\right]   &  =\sum_{i\geq0}\binom{m+1}{i}\left(
W_{i-1}v\right)  _{\left(  m+n-i+1\right)  }=\\
&  =\left(  W_{-1}v\right)  _{\left(  m+n+1\right)  }+\left(  m+1\right)
\left(  W_{0}v\right)  _{\left(  m+n\right)  }=0
\end{align*}
so components $v_{(n)}$ span a module from intermediate series $V_{\alpha
,\beta,0}^{\prime}$ for $\beta=1-h_{1}$. Hence we have a nontrivial
$\mathcal{L}$-operator
\[
\Phi:V_{\alpha,\beta,0}^{\prime}\otimes M(c,h_{2},h_{W})\rightarrow
M(c,h_{3},h_{W}^{\prime}),\quad\Phi(v_{(n)}\otimes x)=v_{(n)}x.
\]
Since dimensions of weight subspaces are infinite in $V_{\alpha,\beta
,0}^{\prime}\otimes M(c,h_{2},h_{W})$ and finite in $M(c,h_{3},h_{W}^{\prime
})$, we conclude that $V_{\alpha,\beta,0}^{\prime}\otimes M(c,h_{2},h_{W})$ is reducible.

Let us mark some intertwining operators. Since $M(c,h,h_{W})$ is
$L(c,0,0)$-module, there exists intertwining operator of type $\binom
{M(c,h,h_{W})}{L(c,0,0)\text{\quad}M(c,h,h_{W})}$ and the transposed operator
of type $\binom{M(c,h,h_{W})}{M(c,h,h_{W})\text{\quad}L(c,0,0)}$. In
particular, operators of type
\[
\binom{L(c,h,0)}{L(c,h,0)\text{\quad}L(c,0,0)}\text{ and }\binom{L^{\prime
}(c,h,0)}{L^{\prime}(c,h,0)\text{\quad}L(c,0,0)}%
\]
exist for all $h$.

\section{Irreducibility of a module $V_{\alpha,\beta,0}^{\prime}\otimes
L(c,h,h_{W})$\label{irr tezor}}

We state the main result of this section immediately:

\begin{theorem}
\label{main}Module $V_{\alpha,\beta,0}^{\prime}\otimes L(c,h,h_{W})$ is
irreducible if and only if there is a subsingular vector $u$ in $V(c,h,h_{W})
$ such that $\overline{u}=L_{-p}^{r}v$, and if $\alpha+(1-p)\beta
\notin\mathbb{Z}$.
\end{theorem}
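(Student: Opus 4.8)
The plan is to prove both directions of the equivalence by analyzing when the natural map from the tensor product module to highest weight modules (supplied by the intertwining operator philosophy of Section \ref{VOA}, or equivalently by explicit submodule constructions) can be an isomorphism, and conversely when a proper submodule must exist. I would first fix notation: write $M=V_{\alpha,\beta,0}^{\prime}\otimes L(c,h,h_{W})$ and recall from Lemma \ref{teh} (referenced as (\ref{hh})) that $M$ is generated by the vectors $v_{n}\otimes v$, $n\in\mathbb{Z}$, and that each weight space is $\bigoplus_{k\geq 0}\mathbb{C}v_{n-m}\otimes L(c,h,h_{W})_{h+k}$. The condition $\alpha+(1-p)\beta\notin\mathbb{Z}$ should be recognized, via $\beta=1-h_{1}$ type identifications, as exactly the condition that prevents the ``shifted'' highest weight $L(c,h+p,h_{W})$-type contragredient pairing from landing inside $M$ at an integer weight offset; I would make this precise by computing, for a putative homogeneous generator $w=\sum_{k}\lambda_{k}\,v_{n-k}\otimes x_{k}$ with $x_{k}\in L(c,h,h_{W})_{h+k}$, the action of $L_{m}$ and $W_{m}$ for $m>0$ and extracting the ``leading'' component.

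For the forward direction (irreducibility implies existence of a subsingular vector and the arithmetic condition on $\alpha,\beta$), I would argue contrapositively. If no subsingular vector exists in $V(c,h,h_{W})$, then by Theorem \ref{glavni}(i) and Corollary \ref{wkrit} the module $L(c,h,h_{W})=L^{\prime}(c,h,h_{W})$ has the larger character $q^{h}(1-q^{p})\prod(1-q^{k})^{-2}$, and the extra ``room'' coming from the absence of the $L_{-p}^{r}$-relation should allow one to build a proper submodule of $M$; concretely, the vectors $W_{-p}v$ being zero in $L(c,h,h_{W})$ forces certain combinations $v_{n}\otimes v$ to become linearly dependent ``modulo lower terms'' with the action, and one extracts a proper nonzero submodule. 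If instead $\alpha+(1-p)\beta\in\mathbb{Z}$, then a highest-weight-type vector of the form (leading term) $v_{N}\otimes v$ for suitable $N$ together with correction terms is annihilated by $\mathcal{L}_{+}$, generating a proper submodule — this is the $W(2,2)$ analogue of the classical Virasoro computation, and here the subsingular vector formula from Theorem \ref{glavni}(ii) is what supplies the correction terms explicitly (this is presumably the point of the remark in the introduction that ``generic singular vectors are of no use'', cf.\ Theorem \ref{skroz red}).

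For the reverse direction, assume a subsingular vector $u$ with $\overline{u}=L_{-p}^{r}v$ exists (so $h=h_{W}+\frac{(13p+1)(p-1)}{12}+\frac{(1-r)p}{2}$ by Theorem \ref{dov}) and $\alpha+(1-p)\beta\notin\mathbb{Z}$. I would take an arbitrary nonzero submodule $N\subseteq M$, pick a nonzero homogeneous $w\in N$, and run a ``straightening''/lowest-component argument: using the $L_{m},W_{m}$ ($m>0$) action and the fact that $L(c,h,h_{W})$ has the PBW basis $B$ from Theorem \ref{glavni}(ii) (no $W_{-p}$, and $L_{-p}$-degree $<r$), show that by repeatedly applying positive operators one can reduce $w$ to a nonzero multiple of some $v_{n}\otimes v$. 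The key is that the relations defining $L(c,h,h_{W})$ — both $W_{-p}v\equiv(\text{higher }W\text{-degree})$ and $L_{-p}^{r}v\equiv(\text{lower }L_{-p}\text{-degree})$ — must both be exploited: the first relation is what makes the generic (Virasoro-style) singular vector argument fail, and the subsingular relation is what makes it succeed, since it is precisely the $L_{-p}^{r}$ coefficient that, under the hypothesis $\alpha+(1-p)\beta\notin\mathbb{Z}$, cannot be killed. Once some $v_{n}\otimes v\in N$, one shows $v_{m}\otimes v\in N$ for all $m$ (applying $L_{k}$ and $W_{k}$ and using irreducibility of $V_{\alpha,\beta,0}^{\prime}$ as a Virasoro module together with irreducibility of $L(c,h,h_{W})$), hence $N=M$.

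The main obstacle I anticipate is the bookkeeping in the straightening argument: one must track simultaneously the $V_{\alpha,\beta,0}^{\prime}$-index, the $L_{0}$-weight in $L(c,h,h_{W})$, the $W$-degree, and the $L_{-p}$-degree, and show that applying a suitable product of $L_{m}$'s and $W_{m}$'s strictly decreases a well-chosen lexicographic complexity measure while keeping the relevant leading coefficient nonzero — and the nonvanishing of that coefficient is exactly where the noncongruence $\alpha+(1-p)\beta\notin\mathbb{Z}$ enters (it appears as a factor of the form $\alpha+n+(1-p)\beta+\cdots$ which is never an integer and hence never zero). Making the interplay between these two relations and this coefficient airtight, rather than the individual computations, is the crux.
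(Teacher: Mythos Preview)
Your reverse-direction plan has the two key steps swapped, and this is a genuine gap. The straightening argument---reducing an arbitrary homogeneous $w\in N$ to some $v_{n}\otimes v$---does \emph{not} require the subsingular relation or the arithmetic condition: it works for any irreducible $L(c,h,h_{W})$, and is exactly Theorem \ref{m} (one repeatedly applies $W_{k},L_{k}$ with $k>0$ and uses only that $L$ has no singular vector other than $v$). The subsingular vector is instead essential precisely at the step you dismiss as routine, namely showing $v_{n-1}\otimes v\in U(\mathcal{L})(v_{n}\otimes v)$. Your justification ``applying $L_{k}$ and $W_{k}$ and using irreducibility of $V_{\alpha,\beta,0}^{\prime}$ together with irreducibility of $L(c,h,h_{W})$'' fails: from $v_{n}\otimes v$ one reaches $v_{n+1}\otimes v$ via $L_{1}$, but to go \emph{down} in index one needs some $x\in U(\mathcal{L}_{-})$ with $xv=0$ in $L$. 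If $J(c,h,h_{W})$ is generated by $u^{\prime}\in\mathcal{W}$ alone, every such $x$ lies in $U(\mathcal{L}_{-})u^{\prime}$, and since $W_{m}$ acts trivially on $V_{\alpha,\beta,0}^{\prime}$ these contribute nothing to the $v_{n-1}\otimes v$ component---this is Theorem \ref{skroz red}, and it shows that irreducibility of the factors is insufficient (indeed Theorem \ref{ver} already shows the tensor with the Verma module is always reducible). What actually works (Theorem \ref{tenz irr}) is to act by the subsingular $u$ on $v_{n+rp-1}\otimes v$ and successively eliminate the terms $v_{n+ip-1}\otimes L_{-p}^{i}v$; the outcome is
\[
\Bigl(\prod_{j=0}^{r-1}\bigl(n+(r-j)p-1+\alpha+(1-p)\beta\bigr)\Bigr)\,v_{n-1}\otimes v\ \in\ \sum_{i\geq 0}U_{n+i},
\]
and the hypothesis $\alpha+(1-p)\beta\notin\mathbb{Z}$ makes this product nonzero for every $n$.

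For the forward direction when $\alpha+(1-p)\beta\in\mathbb{Z}$, the paper does not build a highest-weight vector in $M$; it shows directly (Theorem \ref{malo red}) that $U_{1-jp}\subsetneq U_{-jp}$ by the same mechanism: any $y\in U(\mathcal{L}_{-})$ with $yv=0$ lies in $U(\mathcal{L}_{-})\{u,u^{\prime}\}$, the $u^{\prime}$-part is useless as above, and normalizing $\alpha+(1-p)\beta=0$ one checks that $L_{-p}^{r}(v_{1-jp+rp-1}\otimes v)$ produces no $v_{-jp}\otimes v$ term because the product of $\lambda_{j}$'s vanishes. Your idea of exhibiting a singular vector in $M$ with correction terms might be made to work, but note that a bare $v_{N}\otimes v$ is annihilated by all $L_{m}$, $m>0$, only when $\beta=0$, so the ``correction terms'' would carry the entire burden; the paper's submodule-chain argument sidesteps this.
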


We shall prove this theorem in several steps, using analogous approach as in
the Virasoro case (see \cite{Radobolja2}). First we show that some relation in
$L(c,h,h_{W})$ is needed to obtain irreducibility.

\begin{theorem}
\label{ver}Module $V_{\alpha,\beta,0}^{\prime}\otimes V(c,h,h_{W})$ is reducible.
\end{theorem}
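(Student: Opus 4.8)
The plan is to construct an explicit proper nonzero submodule of $V_{\alpha,\beta,0}^{\prime}\otimes V(c,h,h_{W})$ by exhibiting a nonzero homomorphism out of the tensor product whose image lands in a space of smaller weight multiplicities, or equivalently by pairing with the Verma module's dual. Concretely, since $V(c,h,h_{W})$ is a highest weight module, $W_{-p}v$ (up to correction terms) generates a submodule isomorphic to $V(c,h+p,h_{W})$ by Theorem \ref{u'}; in fact for \emph{any} $c,h,h_{W}$ with $2h_W+\frac{p^2-1}{12}c=0$, but here we want something that works with no constraint at all. The cleaner route, mimicking the Virasoro argument in \cite{Radobolja2}, is to build a nonzero $\mathcal{L}$-module map $\pi\colon V_{\alpha,\beta,0}^{\prime}\otimes V(c,h,h_{W})\to V(c,h',h_W')$ for a suitable highest weight $(h',h_W')$, using the fact that $V(c,h,h_W)$ surjects onto every highest weight module of its weight; the kernel of such a $\pi$ is then a proper submodule, and it is nonzero because the source has infinite-dimensional weight spaces while the target has finite-dimensional ones.

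First I would set up the candidate map. Look for a vector $w=\sum_{n}v_{-n}\otimes x_{n}\in V_{\alpha,\beta,0}^{\prime}\otimes V(c,h,h_{W})$ in a fixed weight space that is annihilated by $\mathcal{L}_{+}$ modulo the submodule it generates is \emph{not} all of the tensor product; more precisely I would look for a nonzero $\mathcal{L}_+$-singular vector in the \emph{restricted dual}, i.e.\ a nonzero linear functional $\phi$ on a weight space of $V_{\alpha,\beta,0}^{\prime}\otimes V(c,h,h_{W})$ that is $\mathcal{L}_-$-invariant in the appropriate sense. Equivalently: since $V(c,h,h_W)$ is free over $U(\mathcal{L}_-)$, and $V_{\alpha,\beta,0}^{\prime}$ has a one-dimensional graded pieces, one can attempt to define a $U(\mathcal{L})$-homomorphism from the tensor product to $V(c,h'',h_W)$ sending $v_{-n}\otimes v\mapsto L_{-n}^{\text{-action image}}$... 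The honest way the paper will likely do it: exhibit a \emph{highest weight vector} in $V_{\alpha,\beta,0}^{\prime}\otimes V(c,h,h_{W})$, namely show that $v_0\otimes v$ (the "top" vector) together with its translates generate the module but that there is a \emph{different} vector of the form $\sum_{k\ge0}\lambda_k\, v_{k}\otimes L_{-1}^{?}\cdots v$ killed by all $L_n, W_n$ with $n>0$, lying in a single weight space, hence generating a highest weight submodule which is proper by the weight-space dimension count.

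So the key steps, in order, are: (1) compute the action of $L_n$ and $W_n$ ($n>0$) on a general element $\sum_n v_{m-n}\otimes x_n$ of the weight space $(V_{\alpha,\beta,0}^{\prime}\otimes V(c,h,h_W))_{h+m-\alpha-\beta}$, using $W_n(v_j\otimes y)=v_j\otimes W_n y$ and the given formula for $L_n v_j$; (2) solve the resulting (triangular, infinite) linear recursion for the coefficients $x_n\in V(c,h,h_W)_{h+n}$ forcing $L_n w=W_n w=0$ for all $n>0$ — the recursion should be solvable precisely because no irreducibility constraint has been imposed, i.e.\ in the full Verma module every such recursion closes up; (3) observe this $w$ is nonzero and generates a proper submodule: properness follows since $U(\mathcal{L})w$ is a highest weight module with finite-dimensional weight spaces, whereas $V_{\alpha,\beta,0}^{\prime}\otimes V(c,h,h_W)$ has infinite-dimensional weight spaces (this is the same mechanism flagged at the end of Section \ref{VOA} via the intertwining operator $\Phi$). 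I expect the main obstacle to be step (2): showing the infinite recursion for the $x_n$ is consistent and yields a well-defined element of the (graded, hence termwise finite) tensor product — one must verify that at each level the coefficient $x_n$ is determined and that the lower-level constraints coming from different raising operators $L_n, W_n$ are mutually compatible. In the Virasoro case of \cite{Radobolja2} this compatibility is automatic from the highest weight vector of the dual Verma module; here the extra $W$-directions must be checked, but since $W_n$ acts only on the second tensor factor and $V(c,h,h_W)$ is a genuine Verma (no relations), the $W$-constraints reduce to statements internal to $V(c,h,h_W)$ that hold trivially, so the real content is the $L$-recursion, exactly as in the classical case.
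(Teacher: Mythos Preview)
Your approach has a genuine gap, and the paper's proof is both different and much simpler.

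The paper does not look for a singular vector in the tensor product or a map to a highest weight module. Instead it shows directly that the cyclic submodule $U_{k}=U(\mathcal{L})(v_{k}\otimes v)$ is proper: if $v_{k-1}\otimes v\in U_{k}$, then one may write $v_{k-1}\otimes v=\sum_{i=0}^{n}w_{i+1}(v_{k+i}\otimes v)$ with $w_{j}\in U(\mathcal{L}_{-})_{-j}$; reading off the $v_{k+n}$--component forces $w_{n+1}v=0$, contradicting freeness of the Verma module over $U(\mathcal{L}_{-})$. That is the whole argument.

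Your plan fails at step~(2). A homogeneous element of $V_{\alpha,\beta,0}^{\prime}\otimes V(c,h,h_{W})$ is a \emph{finite} sum $w=\sum_{j=0}^{N}v_{m+j}\otimes x_{j}$ with $x_{j}\in V(c,h,h_{W})_{h+j}$; the weight space, though infinite--dimensional, is a direct sum, so its elements have finite support. Imposing $L_{k}w=0$ and looking at the highest $v$--index gives $-(m+N+\alpha+(1+k)\beta)\,v_{m+N+k}\otimes x_{N}=0$ for every $k>0$, which forces $x_{N}=0$ unless $\beta=0$ and $m+N=-\alpha\in\mathbb{Z}$. Thus for generic $\alpha,\beta$ the tensor product contains no nonzero singular vector at all, and your recursion necessarily produces an infinite formal sum that is not an element of the module. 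Your parenthetical ``graded, hence termwise finite'' conflates finite--dimensionality of each $V(c,h,h_{W})_{h+j}$ with finiteness of the sum over $j$; only the former holds.

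The alternative you sketch --- constructing a nonzero $\mathcal{L}$--map $\pi$ from the tensor product to a highest weight module --- is exactly the intertwining--operator mechanism the paper describes in Section~\ref{VOA}, but there the existence of such an operator is \emph{assumed}, not proved, and reducibility is deduced from it. You would have to establish that existence independently, and nothing in your outline does so. The paper's three--line freeness argument sidesteps all of this.
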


\begin{proof}
Let $v$ the highest weight vector in $V(c,h,h_{W})$. We show $v_{k}\otimes v$
generates a proper submodule for any $k\in\mathbb{Z}$. Assume otherwise. Then
$w$ exists in $U(\mathcal{L})$,$\mathcal{\ }$such that $w(v_{k}\otimes
v)=v_{k-1}\otimes v$. Since $U(\mathcal{L}_{+})v=0$, we have $v_{k-1}\otimes
v=\sum_{i=0}^{n}w_{i+1}(v_{k+i}\otimes v)$ for some $0\neq w_{j}\in
U(\mathcal{L}_{-})_{-j}$. But then $v_{k+n}\otimes w_{n+1}v=0$ which means
that $w_{n+1}v=0$. This is impossible since Verma module is free over
$U(\mathcal{L}_{-})$.
\end{proof}

Next we prove irreducibility criterion analogous to the one proven in
\cite{Zhang} and generalized in \cite{Radobolja2}.

\begin{theorem}
\label{m}Module $V_{\alpha,\beta,0}^{\prime}\otimes L(c,h,h_{W})$ is
irreducible if and only if it is cyclic on every vector $v_{k}\otimes v$.
\end{theorem}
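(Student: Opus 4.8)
The plan is to establish the two directions separately, with the forward direction (irreducible $\Rightarrow$ cyclic on every $v_k\otimes v$) being essentially immediate and the reverse direction (cyclic on every $v_k\otimes v$ $\Rightarrow$ irreducible) carrying all the weight. For the forward direction I would argue that by Lemma~\ref{teh} (the referenced equation (\ref{hh})) the vectors $\{v_n\otimes v:n\in\mathbb{Z}\}$ generate the whole module, and each $v_k\otimes v$ is a weight vector; if the module is irreducible then by definition the submodule generated by any nonzero vector is everything, so in particular $U(\mathcal{L})(v_k\otimes v)=V_{\alpha,\beta,0}^{\prime}\otimes L(c,h,h_W)$ for every $k$, which is the cyclicity claim.

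For the reverse direction I would let $N$ be a nonzero submodule and aim to show $N$ contains some $v_k\otimes v$, whence cyclicity forces $N$ to be the whole module. The key structural tool is the weight decomposition
\[
\left(V_{\alpha,\beta,0}^{\prime}\otimes L(c,h,h_W)\right)_{h+m-\alpha-\beta}=\bigoplus_{n\in\mathbb{Z}_{+}}\mathbb{C}\,v_{n-m}\otimes L(c,h,h_W)_{h+n},
\]
recorded in Section~2: each weight space is a direct sum over the $L_0$-grading of $L(c,h,h_W)$, and the $n=0$ summand is exactly $\mathbb{C}(v_{-m}\otimes v)$. So I would take a nonzero weight vector $x\in N$, write it in this decomposition, and try to "push it down" to the $n=0$ component. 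Concretely, if $x$ has a nonzero component in $v_{n-m}\otimes L(c,h,h_W)_{h+n}$ with $n>0$ minimal, I would apply a suitable raising element of $\mathcal{L}_+$ — acting on $L(c,h,h_W)$ it lowers the $L_0$-grade, and acting on $V_{\alpha,\beta,0}^{\prime}$ it shifts the index — and control the result so that the minimal $n$ strictly decreases while the vector stays nonzero. Iterating lands a nonzero vector in the $n=0$ layer, i.e. a nonzero multiple of some $v_k\otimes v$, which lies in $N$.

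The main obstacle is precisely this "pushing down" step: one must guarantee that the chosen element of $\mathcal{L}_+$ does not annihilate $x$. Acting with a single $L_j$ or $W_j$ on the bottom nonzero layer produces, in the $(n-j)$-layer, a sum of images of vectors in $L(c,h,h_W)_{h+n}$ under operators like $L_j$, $W_j$; since $L(c,h,h_W)$ is irreducible, for a fixed nonzero vector $y\in L(c,h,h_W)_{h+n}$ with $n>0$ there is \emph{some} $a\in\mathcal{L}_+$ with $ay\neq 0$, but a single component of $x$ is not a single tensor — it is a sum $\sum_i v_{n-m}\otimes y_i$ with the $y_i$ spread across a basis of $L(c,h,h_W)_{h+n}$, and different raising operators may kill different pieces. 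The honest fix is to work one graded layer at a time and argue that, because $L(c,h,h_W)_{h+n}$ for $n\ge 1$ has no $\mathcal{L}_+$-invariant vectors other than $0$ (irreducibility again, via the highest weight structure), a finite product of elements of $\mathcal{L}_+$ chosen greedily will eventually move the bottom layer up while keeping the overall vector nonzero — here one may also need the hypothesis packaged into later theorems about which $v_{n-m}$-coefficients survive, but at this level of generality the statement only requires the irreducibility of $L(c,h,h_W)$ as a module over $\mathcal{L}_+$-action on positive grades, plus the observation that the index-shift on $V_{\alpha,\beta,0}^{\prime}$ is a bijection $v_{n-m}\mapsto v_{n-m+j}$ and so never causes a collapse. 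I expect the write-up to proceed by induction on the minimal $n$ appearing, with the base case $n=0$ giving the desired $v_k\otimes v\in N$ directly.
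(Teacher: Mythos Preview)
Your overall architecture is right, but the ``pushing down'' step has a real gap, and the induction parameter is chosen incorrectly.

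First, the induction should be on the \emph{maximal} level appearing in $x$, not the minimal one. Write a homogeneous $x\in N$ as $x=v_{m-n}\otimes x_0+\cdots+v_m\otimes x_n$ with $x_j\in L(c,h,h_W)_{h+j}$ and $x_n\neq 0$; the goal is to produce a nonzero element of $N$ with strictly smaller top index $n$. Reducing the minimal positive level does nothing toward isolating a multiple of $v_k\otimes v$: you need \emph{all} components with $j>0$ to vanish, i.e.\ you need $n=0$.

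Second, and more seriously, your ``apply a raising element and the top level drops'' heuristic fails for $L_k$. A direct computation gives, at level $n$ in $L_k x$, the term $-(m+\alpha+\beta+k\beta)\,v_{m+k}\otimes x_n$, which is generically nonzero; so acting by $L_k$ alone does \emph{not} shorten $x$. Your proposed greedy product of raising operators does not address this cancellation problem. The paper's key observation, which you are missing, is that $W_k$ acts trivially on $V'_{\alpha,\beta,0}$. Hence $W_k x=\sum_i v_{m-n+i}\otimes W_k x_i$ and the top component is $v_m\otimes W_k x_n$ at level $n-k$: if $W_k x_n\neq 0$ for some $k>0$ you get a strictly shorter nonzero vector in $N$ for free. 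Only when $W_k x_n=0$ for all $k>0$ do you need the Virasoro argument (then $L_1x_n$ or $L_2x_n$ is nonzero since $x_n$ is not singular in the irreducible $L(c,h,h_W)$), and the paper defers that case to the explicit construction in \cite{Radobolja2}, Theorem~3.2; this Virasoro step is itself nontrivial and cannot be replaced by a vague greedy argument.
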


\begin{proof}
We follow the proof of Theorem 3.2 in \cite{Radobolja2}. The only if part is
trivial. Assume $V_{\alpha,\beta,0}^{\prime}\otimes L(c,h,h_{W})$ is cyclic on
every $v_{k}\otimes v$. Let $U$ be a submodule and $0\neq x\in U$ homogeneous
vector. Then
\[
x=v_{m-n}\otimes x_{0}+\cdots+v_{m}\otimes x_{n}%
\]
for some $x_{j}\in V(c,h,h_{W})_{j}$. We use induction on $n$ to show there is
$v_{k}\otimes v\in U$ for some $k\in\mathbb{Z}$. If $n=0$, $x_{n}\in
\mathbb{C}v$ and we are done. Let $n>0$. Our strategy is to act on $x$ by
$U(\mathcal{L})$ in order to 'shorten it' and apply induction. Recall
$W_{k}v_{i}=0$ for any $k,i$. If $W_{k}x_{n}\neq0$ for $k\in\mathbb{N}$, we
have
\[
W_{k}x=v_{m-n+k}\otimes y_{0}+\cdots+v_{m}\otimes y_{n-k}\neq0
\]
where $y_{j}=W_{k}x_{j+k}\in V(c,h,h_{W})_{j}$. By inductive hypothesis, now
there must be some $v_{k}\otimes v\in U$. Assume, therefore, that $W_{k}%
x_{n}=0$ for all $k\in\mathbb{N}$. Since $W_{1}$, $W_{2}$, $L_{1}$ and $L_{2}$
generate $U(\mathcal{L}_{+})$, vectors $L_{1}x_{n}$ and $L_{2}x_{n}$ can not
both equal zero, for otherwise $x_{n}$ would be a singular vector in
$L(c,h,h_{W})$ other than $v$. But now we can follow the proof of Theorem 3.2
in \cite{Radobolja2}. This completes the proof.
\end{proof}

Define $U_{n}:=U(\mathcal{L})(v_{n}\otimes v)$. Proof of Theorem \ref{m}
actually shows

\begin{corollary}
\label{t}Let $M$ a nontrivial submodule in $V_{\alpha,\beta,0}^{\prime}\otimes
L(c,h,h_{W})$. Then $M$ contains $U_{n}$ for some $n\in\mathbb{Z}$.
\end{corollary}

Module $V_{\alpha,\beta,0}^{\prime}\otimes L(c,h,h_{W})$ is irreducible if and
only if $U_{n}=U_{n+1}$ for all $n\in\mathbb{Z}$. Since $L_{1}(v_{n}\otimes
v)=-(n+\alpha+2\beta)v_{n+1}\otimes v$, we get $U_{n}\supseteq U_{n+1}$ for
$n\neq-\alpha-2\beta$. To obtain the other inclusion, we need a subsingular vector.

\begin{lemma}
\label{teh}Let $k=k_{1}+\cdots k_{s}$ for $k_{i}\in\mathbb{N}$. Then
\[
L_{-k_{s}}\cdots L_{-k_{1}}(v_{n}\otimes v)=\sum_{i=0}^{k}u_{i}(v_{n-i}\otimes
v)
\]
for some $u_{i}\in U(\operatorname*{Vir}_{-})$.
\end{lemma}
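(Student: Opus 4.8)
The plan is to argue by induction on the number $s$ of Virasoro factors, the case $s=0$ being immediate with $u_{0}=\mathbf{1}\in U(\operatorname*{Vir}_{-})$. Assuming the claim for $s-1$, so that $L_{-k_{s-1}}\cdots L_{-k_{1}}(v_{n}\otimes v)=\sum_{i=0}^{k'}u_{i}(v_{n-i}\otimes v)$ with $k'=k_{1}+\cdots+k_{s-1}$ and each $u_{i}\in U(\operatorname*{Vir}_{-})$, I would apply $L_{-k_{s}}$ to both sides and analyse each summand, tracking how the shift index grows.

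For a fixed $i$, write $L_{-k_{s}}\big(u_{i}(v_{n-i}\otimes v)\big)=[L_{-k_{s}},u_{i}]\,(v_{n-i}\otimes v)+u_{i}\,L_{-k_{s}}(v_{n-i}\otimes v)$, and expand the last term by the Leibniz rule of the tensor action as $u_{i}\big((L_{-k_{s}}v_{n-i})\otimes v\big)+u_{i}\big(v_{n-i}\otimes L_{-k_{s}}v\big)$. Three elementary facts then do all the work. First, $\{L_{n},C\}$ spans a Lie subalgebra of $\mathcal{L}$, so $\operatorname{ad}(L_{-k_{s}})$ maps $U(\operatorname*{Vir}_{-})$ into itself and $[L_{-k_{s}},u_{i}]\in U(\operatorname*{Vir}_{-})$, contributing a term of the required form with shift index $i$. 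Second, by the defining action on $V_{\alpha,\beta,0}$ one has $L_{-k_{s}}v_{n-i}=-(n-i+\alpha+\beta-k_{s}\beta)\,v_{n-i-k_{s}}$, a scalar multiple of $v_{n-i-k_{s}}$, so $u_{i}\big((L_{-k_{s}}v_{n-i})\otimes v\big)$ is a scalar multiple of $u_{i}(v_{n-i-k_{s}}\otimes v)$, now with shift index $i+k_{s}\le k$. Third, rewriting $v_{n-i}\otimes L_{-k_{s}}v=L_{-k_{s}}(v_{n-i}\otimes v)-(L_{-k_{s}}v_{n-i})\otimes v$ and using $L_{-k_{s}}\in\operatorname*{Vir}_{-}$, one obtains $u_{i}\big(v_{n-i}\otimes L_{-k_{s}}v\big)=(u_{i}L_{-k_{s}})(v_{n-i}\otimes v)$ up to a scalar multiple of $u_{i}(v_{n-i-k_{s}}\otimes v)$, and $u_{i}L_{-k_{s}}\in U(\operatorname*{Vir}_{-})$. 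Summing over $i$ and collecting by shift index then produces a decomposition $\sum_{i=0}^{k}u_{i}'(v_{n-i}\otimes v)$ with $u_{i}'\in U(\operatorname*{Vir}_{-})$, completing the induction.

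I do not anticipate a genuine obstacle: the argument is entirely formal once one notes that we only ever apply operators $L_{m}$, that brackets of these stay inside $\operatorname*{Vir}$ (so no factor $W_{-m}$ is ever introduced into the coefficients, which is exactly why $u_{i}\in U(\operatorname*{Vir}_{-})$ and not merely $U(\mathcal{L}_{-})$), and that each $L_{-m}$ carries $v_{j}$ to a scalar multiple of $v_{j-m}$. The only mild point of care is the bookkeeping of the shift index, which stays in $\{0,\dots,k\}$ because each application of $L_{-k_{s}}$ lowers it by at most $k_{s}$. The statement will then be combined with $W_{m}(v_{n}\otimes v)=v_{n}\otimes W_{m}v$ to conclude that $\{v_{n}\otimes v:n\in\mathbb{Z}\}$ generates $V_{\alpha,\beta,0}^{\prime}\otimes L(c,h,h_{W})$, since it shows $L_{-k_{s}}\cdots L_{-k_{1}}(v_{n}\otimes v)\in\sum_{i=0}^{k}U(\mathcal{L})(v_{n-i}\otimes v)$.
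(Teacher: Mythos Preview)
Your argument is correct and follows essentially the same inductive route as the paper, which first expands $L_{-k_s}\cdots L_{-k_1}(v_n\otimes v)$ by the Leibniz rule into a sum of terms $v_{n-i}\otimes L_{-j_m}\cdots L_{-j_1}v$ and then rewrites each such term in the desired form by induction on $m$. One remark: your detour through the commutator $[L_{-k_s},u_i]$ followed by a second Leibniz expansion is redundant---the two scalar contributions involving $u_i(v_{n-i-k_s}\otimes v)$ cancel, and the whole computation collapses to the immediate identity $L_{-k_s}\big(u_i(v_{n-i}\otimes v)\big)=(L_{-k_s}u_i)(v_{n-i}\otimes v)$ with $L_{-k_s}u_i\in U(\operatorname{Vir}_-)$, which already completes the induction step.
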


\begin{proof}
From (\ref{1})\textbf{\ }we get
\[
L_{-k_{s}}\cdots L_{-k_{1}}(v_{n}\otimes v)=\sum_{i=0}^{k}v_{n-i}\otimes
L_{-j_{m}}\cdots L_{-j_{1}}v
\]
for $0\leq j_{1}+\cdots+j_{m}=k-i$. Next we show
\begin{equation}
v_{n-i}\otimes L_{-j_{m}}\cdots L_{-j_{1}}v=\sum_{j=0}^{k-i}x_{j}%
(v_{n-k+j}\otimes v) \label{hh}%
\end{equation}
for some $x_{j}\in U(\mathcal{L}_{-})_{-j}$. But since
\begin{gather*}
v_{n-i}\otimes L_{-j_{m}}\cdots L_{-j_{1}}v=L_{-j_{m}}(v_{n-i}\otimes
L_{-j_{m-1}}\cdots L_{-j_{1}}v)+\\
+(n-i+\alpha+\beta-j_{m}\beta)v_{n-i-j_{m}}\otimes L_{-j_{m-1}}\cdots
L_{-j_{1}}v
\end{gather*}
we may use induction on $m$ to complete the proof.
\end{proof}

\begin{theorem}
\label{tenz irr}Let $u\in V(c,h,h_{W})$ a subsingular vector such that
$\overline{u}=L_{-p}^{r}$. If $\alpha+(1-p)\beta\notin\mathbb{Z}$, module
$V_{\alpha,\beta,0}^{\prime}\otimes L(c,h,h_{W})$ is irreducible.
\end{theorem}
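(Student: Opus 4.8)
The plan is to show that the submodules $U_n$ are all equal, since by Corollary \ref{t} and the discussion following Theorem \ref{m} this is equivalent to irreducibility. Since $L_1(v_n\otimes v) = -(n+\alpha+2\beta)v_{n+1}\otimes v$, we already have $U_n\supseteq U_{n+1}$ whenever $n\neq -\alpha-2\beta$; and because $\alpha+(1-p)\beta\notin\mathbb{Z}$ forces (taking $p\geq 1$) that at most finitely many integer translates are ``bad'', the real content is to produce, for each $n$, some element of $U(\mathcal{L})$ that sends $v_n\otimes v$ back to a nonzero multiple of $v_{n-1}\otimes v$ (plus lower terms already in $U_n$). This is where the subsingular vector enters: working in $L(c,h,h_W)$ the vector $u$ satisfies $u=0$, so the relation $\overline{u}=L_{-p}^r v$ means that a top-degree monomial $L_{-p}^r v$ can be rewritten as a combination of strictly lower-degree (in $L$-degree) monomials. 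The idea is to apply the operator obtained from $u$ — more precisely a suitable component of the action of elements of $U(\mathcal{L}_-)$ built around $L_{-p}^r$ — to $v_n\otimes v$ and read off the coefficient of $v_{n-1}\otimes v$.

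Concretely, I would first use Lemma \ref{teh}: acting by a product $L_{-k_s}\cdots L_{-k_1}$ on $v_n\otimes v$ produces $\sum_i u_i(v_{n-i}\otimes v)$ with $u_i\in U(\operatorname{Vir}_-)$, and more importantly the Virasoro-only monomials $L_{-j_m}\cdots L_{-j_1}v$ tensored against $v_{n-i}$ can be pushed, via the inductive identity (\ref{hh}), into the form $\sum_j x_j(v_{n-k+j}\otimes v)$ with $x_j\in U(\mathcal{L}_-)_{-j}$. The key computation is then to track the coefficient of the single term $v_{n-1}\otimes v$ (equivalently $v_{n-k+j}\otimes v$ with $j=k-1$) arising when we apply the operator ``$L_{-p}^r$ followed by its inverse-rewriting from $u=0$'' — but since $u=0$ only as an element of $L(c,h,h_W)$, what really happens is that applying the relevant raising/lowering combination to $v_n\otimes v$ and using $u(v_n\otimes v)=v_n\otimes u\,v=0$ gives an identity among the $v_m\otimes v$'s. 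I would isolate the leading coefficient of $v_{n-1}\otimes v$ in this identity; it should come out to be (up to the already-invertible factors from $L_1$-shifts) a polynomial in $\alpha,\beta,n$ whose vanishing set is exactly a shifted copy of the locus $\alpha+(1-p)\beta\in\mathbb{Z}$. The hypothesis $\alpha+(1-p)\beta\notin\mathbb{Z}$ guarantees this coefficient is nonzero, yielding $v_{n-1}\otimes v\in U_n$, hence $U_{n-1}=U_n$.

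The main obstacle I anticipate is the bookkeeping in the previous paragraph: extracting, from the full (and rather involved) expansion of the action of $u$'s defining monomial on $v_n\otimes v$, the precise coefficient of $v_{n-1}\otimes v$, and verifying that this coefficient factors as a product of linear terms of the form $(n+\alpha+\beta - j\beta - \text{const})$ so that exactly the condition $\alpha+(1-p)\beta\notin\mathbb{Z}$ (together with the finitely many $L_1$-exceptional weights, which are handled separately by acting from the other side, i.e. using $W_k$ and $L_2$ as in the proof of Theorem \ref{m}) suffices. In the Virasoro case this is done in \cite{Radobolja2}, and the structure here is parallel: the $W_{-m}$ factors in $u$ act ``passively'' on $V_{\alpha,\beta,0}^{\prime}$ since $W_m v_n = 0$, so only the $L$-part of $u$ and the interaction of the $L_{-n}$'s with the $v_m$'s contribute to the relevant coefficient, which is why $\overline u = L_{-p}^r v$ (rather than the full $u$) governs the answer — and why a generic singular vector of $W$-degree one (Theorem \ref{u'}) would be useless here, exactly as claimed in the introduction. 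Once the coefficient is shown nonzero under the stated hypothesis, we conclude $U_n=U_{n+1}$ for all $n$ and hence irreducibility by Theorem \ref{m}.
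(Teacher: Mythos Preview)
Your overall strategy matches the paper's: reduce to showing $U_{n-1}\subseteq U_n+\cdots+U_{n+rp-1}$ by exploiting the relation $uv=0$ in $L(c,h,h_W)$, use that the $W$-factors act trivially on $V'_{\alpha,\beta,0}$, and then check the resulting scalar is a product of terms of the form $n+\text{const}+\alpha+(1-p)\beta$. The endgame with $L_1,L_2$ for the exceptional index $-\alpha-2\beta$ is also as in the paper.

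There is, however, a genuine gap beyond bookkeeping. First, the line ``$u(v_n\otimes v)=v_n\otimes uv=0$'' is false: only the $W$-factors of $u$ pass through to the second tensor slot, while each $L_{-k}$ in $u$ acts nontrivially on $v_n$. What the paper actually does is apply $u$ to $v_{n+rp-1}\otimes v$ and, using Lemma~\ref{teh} together with $W_m(v_k\otimes x)=v_k\otimes W_m x$, rewrite everything so as to obtain an identity of the form
\[
\sum_{i=0}^{rp-1}y_i\,(v_{n+rp-1-i}\otimes v)\;=\;L_{-p}^{\,r}(v_{n+rp-1}\otimes v),
\]
with $y_i\in U(\mathcal L_-)_{rp-i}$ and $y_0=u$. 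The right-hand side is \emph{not} a multiple of $v_{n-1}\otimes v$; it equals $\sum_{i=0}^{r-1}(-1)^{r-i}\binom{r}{i}\Lambda_i\,v_{n+ip-1}\otimes L_{-p}^{\,i}v$, where $\Lambda_i=\prod_{j=0}^{r-i-1}\lambda_j$ and $\lambda_j=n+(r-j)p-1+\alpha+(1-p)\beta$.

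The step your outline omits is the recursive elimination that kills the terms $v_{n+ip-1}\otimes L_{-p}^{\,i}v$ for $i=r-1,r-2,\ldots,1$. Concretely, one subtracts suitable multiples of $L_{-p}^{\,r-1}(v_{n+(r-1)p-1}\otimes v)$, then of $L_{-p}^{\,r-2}(v_{n+(r-2)p-1}\otimes v)$, and so on; each such vector lies in $U_{n+(r-j)p-1}\subseteq\sum_{m\ge n}U_m$, and the binomial identity $\sum_{j=0}^{r}(-1)^{j}\binom{r}{r-j}=0$ (applied componentwise) makes the cancellation work. Only after these $r-1$ subtractions does a pure multiple of $v_{n-1}\otimes v$ remain, with coefficient $-\prod_{j=0}^{r-1}\lambda_j$. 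This elimination is the computational heart of the proof and is not something one can simply ``read off'' from the action of $u$; without it you have only shown $L_{-p}^{\,r}(v_{n+rp-1}\otimes v)\in\sum_{m\ge n}U_m$, which does not yet give $v_{n-1}\otimes v\in\sum_{m\ge n}U_m$.
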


\begin{proof}
For any $n\in\mathbb{N}$ we need to find $x\in U(\mathcal{L})$ such that
$x(v_{n}\otimes v)=v_{n-1}\otimes v$. Since $uv=L_{-p}^{r}v+\sum
w_{i}L_{-k_{s}}\cdots L_{-k_{1}}v=0$, we have
\[
u(v_{n+rp-1}\otimes v)=L_{-p}^{r}(v_{n+rp-1}\otimes v)+\sum w_{i}L_{-k_{s}%
}\cdots L_{-k_{1}}(v_{n+rp-1}\otimes v)
\]
where $w_{i}\in\mathcal{W}_{-i}$, $\deg_{W}w_{i}>0$ and $0<k<rp$ for
$k=k_{1}+\cdots+k_{s}$. From Lemma \ref{teh} and the fact that $w(v_{k}\otimes
x)=v_{k}\otimes wx$ for $w\in\mathcal{W}$, we get $y_{i}\in U(\mathcal{L}%
_{-})_{rp-i}$ such that
\begin{equation}
\sum_{i=0}^{rp-1}y_{i}(v_{n+rp-1-i}\otimes v)=L_{-p}^{r}(v_{n+rp-1}\otimes v).
\label{sing4}%
\end{equation}
Note that $y_{0}=u$. Set $\lambda_{j}:=(n+(r-j)p-1+\alpha+(1-p)\beta)$, and
$\Lambda_{i}=\prod_{j=0}^{r-i-1}\lambda_{j}$. Then (\ref{sing4}) becomes
\begin{equation}
\sum_{i=0}^{rp-1}y_{i}(v_{n+rp-1-i}\otimes v)=\sum_{i=0}^{r-1}(-1)^{r-i}%
\binom{r}{i}\Lambda_{i}v_{n+ip-1}\otimes L_{-p}^{i}v. \label{l p r}%
\end{equation}
Next we want to eliminate components on the right side of (\ref{l p r}),
starting with $v_{n+(r-1)p-1}\otimes L_{-p}^{r-1}v$. Using induction one can
show that
\begin{gather}
-\binom{r}{r-1}\lambda_{0}L_{-p}^{r-1}(v_{n+(r-1)p-1}\otimes v)=\label{b}\\
=-\binom{r}{r-1}\lambda_{0}\sum_{i=0}^{r-1}(-1)^{r-1-i}\binom{r-1}{i}\left(
\prod_{j=1}^{r-1-i}\lambda_{j}\right)  v_{n+ip-1}\otimes L_{-p}^{i}%
v=\nonumber\\
=\sum_{i=0}^{r-1}(-1)^{r-i}\binom{r}{r-1}\binom{r-1}{i}\Lambda_{i}%
v_{n+ip-1}\otimes L_{-p}^{i}v.\nonumber
\end{gather}
The right side of (\ref{l p r})-(\ref{b}) equals
\begin{equation}
\sum_{i=0}^{r-2}(-1)^{r-i}\left(  \binom{r}{i}-\binom{r}{r-1}\binom{r-1}%
{i}\right)  \Lambda_{i}v_{n+ip-1}\otimes L_{-p}^{i}v. \label{b1}%
\end{equation}
Coefficient with $\Lambda_{r-2}$ is $\binom{r}{r-2}-\binom{r}{r-1}\binom
{r-1}{r-2}=-\binom{r}{r-2}$ so we use
\begin{gather}
-\binom{r}{r-2}\lambda_{0}\lambda_{1}L_{-p}^{r-2}(v_{n+(r-2)p-1}\otimes
v)=\label{b2}\\
=-\binom{r}{r-2}\lambda_{0}\lambda_{1}\sum_{i=0}^{r-2}(-1)^{r-2-i}\binom
{r-2}{i}\left(  \prod_{j=2}^{r-1-i}\lambda_{j}\right)  v_{n+ip-1}\otimes
L_{-p}^{i}=\nonumber\\
=-\sum_{i=0}^{r-2}(-1)^{r-i}\binom{r}{r-2}\binom{r-2}{i}\Lambda_{i}%
v_{n-(r-i)p-1}\otimes L_{-p}^{i}\nonumber
\end{gather}
and the right side of (\ref{b1})-(\ref{b2}) is
\[
\sum_{i=0}^{r-3}(-1)^{r-i}\left(  \binom{r}{i}-\binom{r}{r-1}\binom{r-1}%
{i}+\binom{r}{r-2}\binom{r-2}{i}\right)  \Lambda_{i}v_{n+ip-1}\otimes
L_{-p}^{i}v.
\]
In $r-1$ steps we get
\begin{gather*}
(-1)^{r}\binom{r}{0}\left(  \sum_{j=0}^{r-1}(-1)^{j}\binom{r}{r-j}\binom
{r-j}{0}\right)  \Lambda_{0}v_{n-1}\otimes v=-\left(  \prod_{j=0}^{r-1}%
\lambda_{j}\right)  v_{n-1}\otimes v=\\
=-\left(  \prod_{j=0}^{r-1}(n+(r-j)p-1+\alpha+(1-p)\beta)\right)
v_{n-1}\otimes v.
\end{gather*}
Therefore, there exists $x_{i}\in U(\mathcal{L})$ such that
\[
\sum_{i=0}^{rp-1}x_{i}(v_{n+rp-1-i}\otimes v)=\left(  \prod_{j=0}%
^{r-1}(n+(r-j)p-1+\alpha+(1-p)\beta)\right)  v_{n-1}\otimes v.
\]
From condition $\alpha+(1-p)\beta\notin\mathbb{Z}$, follows
\begin{equation}
U_{n-1}\subseteq U_{n}+\cdots+U_{n+rp-1}. \label{c}%
\end{equation}
If $\alpha+2\beta\notin\mathbb{Z}$ we have $U_{n}\supseteq U_{n+1}$ so
(\ref{c}) becomes $U_{n-1}=U_{n}$. Assume $\alpha+2\beta=-k$. Then using
$L_{1}$ and $L_{2}$ we get
\[
\cdots\supseteq U_{k-1}\supseteq U_{k},U_{k+1}\supseteq U_{k+2}\supseteq\cdots
\]
and (\ref{c}) shows $U_{n}\subseteq U_{n+1}$ for $n\neq k-1$, $U_{k-1}%
\subseteq U_{k}+U_{k+1}$. But then
\[
U_{k+2}\subseteq U_{k}\subseteq U_{k+1}=U_{k+2}%
\]
shows $U_{k}=U_{k+1}$ and so $U_{n}=U_{n+1}$ for all $n\in\mathbb{Z}$. This
completes the proof.
\end{proof}

\begin{theorem}
\label{skroz red}Suppose that a singular vector $u^{\prime}\in\mathcal{W}%
\subseteq V(c,h,h_{W})$ generates maximal submodule $J(c,h,h_{W})$. Then the
module $V_{\alpha,\beta,0}^{\prime}\otimes L(c,h,h_{W})$ is reducible.
\end{theorem}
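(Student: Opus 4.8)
The plan is to show that when $u' \in \mathcal{W}$ generates the maximal submodule $J(c,h,h_W)$, the irreducible quotient $L(c,h,h_W)$ carries a relation that kills $W_{-p}v$, and this relation forces the chain $U_n = U(\mathcal{L})(v_n \otimes v)$ to be strictly decreasing (never stabilizing), whence by Corollary \ref{t} and Theorem \ref{m} the tensor product is reducible. Concretely, since $J(c,h,h_W) = U(\mathcal{L})u'$ and $u' \in \mathcal{W}$ with $\overline{u'} = W_{-p}v$, in $L(c,h,h_W)$ we have the relation $u' = 0$, which expresses $W_{-p}v$ as a (nonzero, by the PBW basis $B'$) sum of monomials of higher $W$-degree. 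The key point is that $L(c,h,h_W) = L'(c,h,h_W)$ here, so Lemma \ref{baza} applies: no monomial containing $W_{-p}$ survives.

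The main step is to establish that $U_n \supsetneq U_{n+1}$ for every $n$, or at least to rule out $U_{n-1} \subseteq U_n$. I would argue by contradiction following the pattern of Theorem \ref{ver}: suppose $v_{n-1} \otimes v \in U_n$, so $v_{n-1} \otimes v = \sum_{i \geq 0} w_{i+1}(v_{n+i} \otimes v)$ for some $w_j \in U(\mathcal{L}_-)_{-j}$, where we may take only finitely many terms and the top term $w_{N+1}$ nonzero. Decompose each $w_j$ by $W$-degree. Projecting onto the component $v_{n+N} \otimes (\text{stuff})$ of top weight, and using that $w(v_k \otimes x) = v_k \otimes wx$ for $w \in \mathcal{W}$ together with the bracket formulas, I expect the leading contribution to be $v_{n+N} \otimes \overline{w_{N+1}}v$ where $\overline{w_{N+1}}$ is the lowest-$W$-degree part; this must vanish in $L(c,h,h_W)$. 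The obstruction is that, unlike in the Verma case, $L(c,h,h_W)$ is not free over $U(\mathcal{L}_-)$, so vanishing of $\overline{w_{N+1}}v$ in $L$ does not immediately give $\overline{w_{N+1}} = 0$; however, using Lemma \ref{nj} (more precisely the fact that a nonzero element of $J(c,h,h_W) = J'(c,h,h_W)$ always has $W_{-p}$ occurring in its lowest $W$-degree part), I can arrange that $\overline{w_{N+1}}v$ lands in $J'(c,h,h_W)$ only if $\overline{w_{N+1}}$ itself contains $W_{-p}$ as a factor — and then iterate, peeling off $W_{-p}$'s and reducing the $W$-degree, to reach a contradiction with $w_{N+1} \neq 0$.

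Alternatively, and perhaps more cleanly, I would invoke the relation directly: in $L(c,h,h_W)$ the element $W_{-p}v$ equals a sum of longer monomials, so acting on $v_m \otimes v$ we get $v_m \otimes W_{-p}v = W_{-p}(v_m \otimes v)$ expressed in terms of $v_m \otimes (\text{higher } W\text{-degree})$; comparing $W$-degrees on the two presentations of elements of $V'_{\alpha,\beta,0} \otimes L(c,h,h_W)$ shows that lowering the $V'$-index $m$ by one can never be achieved, because every operator in $U(\mathcal{L}_-)$ that lowers the index contributes at least one $W_{-n}$ factor to the $L$-part (the $L_{-n}$'s act only through $v_n \otimes L_{-n}x$ plus index-shifting by $L_{-n}(v_m \otimes x) = v_m \otimes L_{-n}x + (\cdots)v_{m-n}\otimes x$, but the index shift then comes with a $W$-degree-zero tail that is controlled by the relation), and the relation prevents the needed cancellation.

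The hard part will be making the $W$-degree bookkeeping precise: one must track how $W$-degree in $L(c,h,h_W)$ (read in the basis $B'$) behaves under the $\mathcal{L}$-action when passing through the relation $u' = 0$, exactly as in Lemma \ref{key'} and Lemma \ref{sub}. I expect the cleanest writeup mimics the proof of Theorem \ref{ver} almost verbatim, replacing "Verma module is free over $U(\mathcal{L}_-)$" with "$L(c,h,h_W)$ has basis $B'$ and, by Lemma \ref{nj}, any nonzero relation among $B'$-vectors coming from $J'$ must involve $W_{-p}$" — so that the analogue of "$w_{n+1}v = 0$ is impossible" becomes "$\overline{w_{n+1}}$ would have to be a nonzero multiple of a monomial with every term containing $W_{-p}$, contradicting $w_{n+1} \in U(\mathcal{L}_-)$ being a single PBW monomial coefficient", giving the contradiction and hence reducibility via Corollary \ref{t}.
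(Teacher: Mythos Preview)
Your setup is correct and matches the paper: assume $v_{n-1}\otimes v=\sum_{i=0}^{N}w_{i+1}(v_{n+i}\otimes v)$ with $w_{j}\in U(\mathcal{L}_{-})_{-j}$, and look at the $v_{n+N}$-component to conclude $w_{N+1}v=0$ in $L(c,h,h_{W})$. (Note: that component is $v_{n+N}\otimes w_{N+1}v$, not $v_{n+N}\otimes\overline{w_{N+1}}v$ as you wrote; the full $w_{N+1}$ appears, since only the term $i=N$ contributes and every $L_{-k}$ that shifts the index leaves the $v_{n+N}$-slot.)

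Where you go astray is in step 3. You try to force $w_{N+1}=0$ via $W$-degree bookkeeping and ``peeling off $W_{-p}$'s'', but this is neither necessary nor clearly workable as you sketch it ($w_{N+1}$ need not be a single monomial, and Lemma~\ref{nj} does not say every term contains $W_{-p}$). The paper's key observation, which you are missing, is much simpler: since the Verma module is free over $U(\mathcal{L}_{-})$, the relation $w_{N+1}v\in J'=U(\mathcal{L}_{-})u'$ gives $w_{N+1}=yf$ in $U(\mathcal{L}_{-})$, where $f\in\mathcal{W}$ is the element with $fv=u'$. Because $f$ is a polynomial in the $W_{-j}$'s and these act trivially on $V'_{\alpha,\beta,0}$, one has $f(v_{n+N}\otimes v)=v_{n+N}\otimes fv=v_{n+N}\otimes u'=0$ in the tensor with $L$. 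Hence the \emph{entire} term $w_{N+1}(v_{n+N}\otimes v)=y\cdot 0=0$ vanishes, not just its $v_{n+N}$-component. Drop it and repeat with a shorter sum; after finitely many steps you reach $v_{n-1}\otimes v=0$, a contradiction.

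So the missing idea is precisely the factorization $w_{N+1}=yf$ together with the triviality of $\mathcal{W}$ on the intermediate series; once you see this, no $W$-degree induction is needed.
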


\begin{proof}
Just like in Theorem \ref{ver}, we show that $U_{n}\neq U_{n-1}$. If $x\in
U(\mathcal{L})$ exists, such that $x(v_{n}\otimes v)=v_{n-1}\otimes v$, then
we can find $x_{i}\in U(\mathcal{L})_{-i}$ such that $v_{n-1}\otimes
v=\sum_{i=0}^{k}x_{i+1}(v_{n+i}\otimes v)$. Than $v_{n+k}\otimes x_{k+1}v=0$
so $x_{k+1}v=0$ meaning $x\in U(\mathcal{L})u^{\prime}$. But since $u^{\prime
}\in\mathcal{W}$, we have $x_{k+1}(v_{n+k}\otimes v)=0$ so this component of
the sum is useless. Repeating the process we get a contradiction.
\end{proof}

\begin{theorem}
\label{malo red}Let $u\in V(c,h,h_{W})$ a subsingular vector such that
$\overline{u}=L_{-p}^{r}$. If $\alpha+(1-p)\beta\in\mathbb{Z}$, module
$V_{\alpha,\beta,0}^{\prime}\otimes L(c,h,h_{W})$ is reducible. There exists
$k\in\mathbb{Z}$ such that $U_{k}$ is irreducible.
\end{theorem}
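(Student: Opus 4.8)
The plan is to analyze the cyclic submodules $U_n=U(\mathcal{L})(v_n\otimes v)$ and the lattice of inclusions among them, exactly as in the Virasoro case treated in \cite{Radobolja2}. Since the whole module is generated by $\{v_n\otimes v:n\in\mathbb{Z}\}$ (Lemma \ref{teh}), it equals $\sum_n U_n$; by Theorem \ref{m} it is irreducible if and only if all $U_n$ coincide, and by Corollary \ref{t} every nonzero submodule contains some $U_n$. So it suffices to (a) exhibit $k$ with $U_k\subseteq U_n$ for all $n$ (this $U_k$ is then automatically irreducible) and (b) show the family $\{U_n\}$ is not constant (this gives reducibility). For bookkeeping I would first normalize, using $V_{\alpha,\beta,0}\cong V_{\alpha+\mathbb{Z},\beta,0}$, so that $\alpha+(1-p)\beta=0$.

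Next I set up the inclusions. From $L_1(v_n\otimes v)=-(n+\alpha+2\beta)v_{n+1}\otimes v$ and $L_2(v_n\otimes v)=-(n+\alpha+3\beta)v_{n+2}\otimes v$ one gets $U_n\supseteq U_{n+1}$ for $n\ne-\alpha-2\beta$ and $U_n\supseteq U_{n+2}$ for $n\ne-\alpha-3\beta$. From the computation in the proof of Theorem \ref{tenz irr} — applying the subsingular relation $uv=0$ to $v_{n+rp-1}\otimes v$ and using Lemma \ref{teh} — one has, for every $n$,
\[
\sum_{i=0}^{rp-1}x_i(v_{n+rp-1-i}\otimes v)=d(n)\,v_{n-1}\otimes v,\qquad d(n):=\prod_{j=0}^{r-1}\bigl(n+(r-j)p-1+\alpha+(1-p)\beta\bigr),
\]
with $x_i\in U(\mathcal{L})$, $x_0=u$; hence $U_{n-1}\subseteq U_n+\cdots+U_{n+rp-1}$ whenever $d(n)\ne0$. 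Because $\alpha+(1-p)\beta\in\mathbb{Z}$, the polynomial $d$ has exactly $r$ integer roots, forming $T=\{-p,-2p,\ldots,-rp\}$ after the normalization above. Combining these relations as in \cite{Radobolja2} (and dealing with the possible integrality of $\alpha+2\beta$ or $\alpha+3\beta$ by the device used at the end of the proof of Theorem \ref{tenz irr}), one shows $U_{n-1}=U_n$ for all $n\notin T$ large enough, that there are only finitely many distinct $U_n$, that they stabilize to a single $U_R$ for $n\gg0$, and that the family is non-increasing in $n$, so $U_R\subseteq U_n\subseteq U_L$ for every $n$ (with $U_L$ the stable value for $n\ll0$). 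Taking $k\gg0$ with $U_k=U_R$ settles (a): any nonzero submodule $M\subseteq U_k$ contains some $U_n\supseteq U_R=U_k\supseteq M$, so $M=U_k$, i.e.\ $U_k$ is irreducible.

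It remains to prove reducibility, i.e.\ $U_L\ne U_R$; equivalently, writing $n^\ast=\max T=-p$, I would show $v_{n^\ast-1}\otimes v\notin U_{n^\ast}$, which forces a strict step in the chain. Suppose not: $v_{n^\ast-1}\otimes v=g(v_{n^\ast}\otimes v)$ for some $g\in U(\mathcal{L})$. Since $W_m v_k=0$ for $m\ne0$, and $\mathcal{L}_0$ resp.\ $\mathcal{L}_+$ act on each $v_k\otimes v$ by scalars resp.\ by index shifts (using $L_mv=W_mv=0$ for $m>0$), putting $g$ in PBW form and regrouping gives $v_{n^\ast-1}\otimes v=\sum_{i\ge0}y_i(v_{n^\ast+i}\otimes v)$ with $y_i\in U(\mathcal{L}_-)$ homogeneous of degree $i+1$, the sum finite. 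Comparing the terms of top $v$-index $n^\ast+i^\ast$ (where $i^\ast$ is the largest $i$ with $y_i\ne0$) forces $y_{i^\ast}v=0$ in $L(c,h,h_W)$, i.e.\ $y_{i^\ast}v\in J(c,h,h_W)=U(\mathcal{L}_-)u+U(\mathcal{L}_-)u'$, so $y_{i^\ast}v=Qu+Q'u'$ with $Q,Q'\in U(\mathcal{L}_-)$, whence $y_{i^\ast}=QP+Q'P'$ in $U(\mathcal{L}_-)$ (where $u=Pv$, $u'=P'v$). Now I split off the two parts as in the proof of Theorem \ref{skroz red}: the contribution $Q'P'(v_{n^\ast+i^\ast}\otimes v)$ vanishes because $P'$ is a polynomial in the $W_{-m}$ only ($u'\in\mathcal{W}$), so it acts through the $L(c,h,h_W)$-factor and kills $v$; the contribution $QP(v_{n^\ast+i^\ast}\otimes v)$ — with $P=L_{-p}^r+(\text{higher }W\text{-degree})$ genuinely shifting the $v$-index — instead produces, via the identity displayed above, terms of strictly smaller $v$-index whose coefficients are values of the same polynomial $d$ at arguments in $T+\mathbb{Z}_{\ge0}$. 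Iterating, the descent must at some point invoke the relation at an element of $T$, where $d$ vanishes, so it cannot close up, and one reaches a contradiction. Hence $v_{n^\ast-1}\otimes v\notin U_{n^\ast}$, the chain is non-constant, and $V_{\alpha,\beta,0}'\otimes L(c,h,h_W)$ is reducible.

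The main obstacle is the last paragraph: making the inductive descent precise and pinpointing exactly where the vanishing of $d$ on $T$ blocks it (this is really the tensor-product incarnation of the coefficient computation preceding Theorem \ref{dov}, combined with the ``$u'$ is useless'' mechanism of Theorem \ref{skroz red}). Everything else — organizing the $U_n$ into a finite non-increasing chain from the relations $U_n\supseteq U_{n+1}$ and $U_{n-1}\subseteq U_n+\cdots+U_{n+rp-1}$, and reading off the irreducible minimal term — is routine and follows the pattern of \cite{Radobolja2}.
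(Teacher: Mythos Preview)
Your plan is the paper's plan: order the $U_n$ using $L_1,L_2$ and the identity $\sum_{i}x_i(v_{n+rp-1-i}\otimes v)=d(n)\,v_{n-1}\otimes v$ from Theorem~\ref{tenz irr}, extract an irreducible minimal $U_k$ via Corollary~\ref{t}, and prove strictness at an obstruction point by a top-index descent in which the $u'$-part is discarded (exactly as in Theorem~\ref{skroz red}) and the $u$-part is recursively absorbed.

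Two remarks. First, an arithmetic slip: after normalizing $\alpha+(1-p)\beta=0$ the zeros of $d$ are $T=\{1-p,1-2p,\ldots,1-rp\}$, so $n^\ast=1-p$ and the target is $v_{-p}\otimes v\notin U_{1-p}$ (the paper's $j=1$ case; it in fact proves strictness at every $j=1,\ldots,r$). Second, your instinct that ``$d$ vanishing on $T$'' is the obstruction is right, but the way it enters the descent is more concrete than ``invoking the relation at an element of $T$'': once the top degree in the putative expression $v_{-jp}\otimes v=\sum_i y_{i+1}(v_{1-jp+i}\otimes v)$ has been reduced to $rp$, the top term is $\lambda P(v_{(r-j)p}\otimes v)$, and the \emph{only} possible $v_{-jp}\otimes(\cdot)$-contribution from it comes from $L_{-p}^r$ (the $W$-degree~$0$ part of $P$) acting purely on the first tensor factor; that scalar is, up to sign, $\prod_{s=0}^{r-1}(r-j-s)p=d(1-jp)=0$. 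For smaller top degrees only $u'$-relations remain (since $\deg P=rp$), and those give zero. Hence the right-hand side never produces $v_{-jp}\otimes v$, the desired contradiction. This is precisely what the paper's final step is doing (its line ``$L_{-p}^{r}(v_{1-jp}\otimes v)=0$'' should be read as the vanishing of this first-factor coefficient).
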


\begin{proof}
Since $\alpha$ is invariant modulo $\mathbb{Z}$ we may assume $\alpha
+(1-p)\beta=0$. From the proof of Theorem \ref{tenz irr} we have
\[
x(v_{n}\otimes v)=-\left(  \prod_{j=0}^{r-1}(n-1+(r-j)p)\right)
v_{n-1}\otimes v
\]
for some $x\in U(\mathcal{L})$, so for $n\notin\left\{  1-p,1-2p,\ldots
,1-rp\right\}  $ we get $U_{n}=U_{n+1}$. Now we prove $U_{1-jp}\neq U_{-jp}$
for $j=1,\ldots,r$. We use the same reasoning as in previous proofs. Assume
otherwise. Then $y\in U(\mathcal{L})$ exists, such that $y(v_{1-jp}\otimes
v)=v_{-jp}\otimes v$, i.e., there are $y_{i}\in U(\mathcal{L}_{-})_{-i}$ such
that
\begin{equation}
v_{-jp}\otimes v=\sum_{i=0}^{m-1}y_{i+1}(v_{1-jp+i}\otimes v). \label{d}%
\end{equation}
But then $y_{m}v=0$. We may assume $y_{m}\in U(\mathcal{L}_{-})u$ because
$u^{\prime}(v_{m-jp}\otimes v)=0$. Let $y_{m}=zu$, $z\in U(\mathcal{L}%
_{-})_{-(m-rp)}$. Then
\[
y_{m}(v_{m-jp}\otimes v)=z\left(  L_{-p}^{r}+\sum w_{i}L_{-k_{s}}\cdots
L_{-k_{1}}\right)  (v_{m-jp}\otimes v).
\]
On the right side we get sum of $z(v_{m-jp-i}\otimes x_{i}v)$ where
$i=0,1,\ldots, rp$. However, since $uv=0$, all the components $v_{m-jp}\otimes
x_{0}v$ add to zero. Like in Lemma \ref{teh} we get $z_{i}\in U(\mathcal{L})$
such that
\[
y_{m}(v_{m-jp}\otimes v)=\sum_{i=1}^{rp}z_{i}(v_{m-i-jp}\otimes v).
\]
However, this means that $y_{m}(v_{m-jp}\otimes v)$ is unnecessary in
(\ref{d}) since it can be expressed with $v_{m-1-jp}\otimes v,\ldots
,v_{m-rp-jp}\otimes v$ (for some $m\geq rp$). Repeating the process we get to
$y_{rp}=u$. However,
\begin{equation}
u(v_{1-jp}\otimes v)=\left(  \sum w_{i}L_{-k_{s}}\cdots L_{-k_{1}}\right)
(v_{1-jp}\otimes v) \label{e}%
\end{equation}
because $L_{-p}^{r}(v_{1-jp}\otimes v)=0$. Obviously, the right side of
(\ref{e}) can not produce $v_{-jp}\otimes v$, leading to contradiction.
Therefore, $U_{1-jp}\subsetneq U_{-jp}$.

Module $U_{1-p}$ is irreducible by Corollary \ref{t}.
\end{proof}

Theorems \ref{ver}, \ref{skroz red}, \ref{malo red} and \ref{tenz irr}
combined prove Theorem \ref{main}.

Note that $U_{1-p}$ in Theorem \ref{malo red} is irreducible module with
infinite-dimensional weight subspaces.

\begin{corollary}
\label{irr sub}Module $V_{\alpha,\beta,0}^{\prime}\otimes L(c,h,h_{W})$
contains an irreducible (not necessarily proper) submodule with
infinite-dimensional weight subspaces if and only if there exists a
subsingular vector $u \in V(c,h,h_{W})$ such that $\overline{u}=L_{-p}^{r}v$.
\end{corollary}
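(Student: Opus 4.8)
The plan is to deduce the statement from the results already established in this section, using throughout the observation (Corollary~\ref{t}) that every nonzero submodule of $V_{\alpha,\beta,0}^{\prime}\otimes L(c,h,h_{W})$ contains some $U_{n}=U(\mathcal{L})(v_{n}\otimes v)$, so that a minimal — in particular an irreducible — submodule must coincide with one of the $U_{n}$. The two implications are then handled separately.

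For the ``if'' part, assume a subsingular vector $u\in V(c,h,h_{W})$ with $\overline{u}=L_{-p}^{r}v$ exists. Then $2h_{W}+\frac{p^{2}-1}{12}c=0$ and $\operatorname{char}L(c,h,h_{W})=q^{h}(1+q^{p}+\cdots+q^{(r-1)p})\prod_{k\neq p}(1-q^{k})^{-2}$ has unbounded coefficients, so $L(c,h,h_{W})$ is infinite-dimensional. If $\alpha+(1-p)\beta\notin\mathbb{Z}$, then by Theorem~\ref{tenz irr} the whole module is irreducible, and each of its weight spaces is a direct sum over $n$ of the spaces $\mathbb{C}v_{n-m}\otimes L(c,h,h_{W})_{h+n}$, hence infinite-dimensional; the module itself then serves as the (improper) submodule required. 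If $\alpha+(1-p)\beta\in\mathbb{Z}$, then Theorem~\ref{malo red} produces an irreducible $U_{k}$, in fact $U_{1-p}$, which by the remark following that theorem has infinite-dimensional weight subspaces. I would also give a direct check of this last point: $U_{1-p}\supseteq U(\mathcal{L}_{-})_{-N}(v_{1-p}\otimes v)$ for every $N$, and since $W_{-i}$ acts trivially on the intermediate-series factor, projecting onto the summand of $v$-index $1-p$ sends $\xi(v_{1-p}\otimes v)$ to $v_{1-p}\otimes\xi v$; as $L(c,h,h_{W})=U(\mathcal{L}_{-})v$, this shows the relevant weight space of $U_{1-p}$ has dimension at least $\dim L(c,h,h_{W})_{h+N}$, which is unbounded in $N$.

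For the ``only if'' part I would argue by contraposition: suppose no subsingular vector $u$ with $\overline{u}=L_{-p}^{r}v$ exists. If $2h_{W}+\frac{p^{2}-1}{12}c\neq0$ for all $p\in\mathbb{N}$ then $V(c,h,h_{W})$ is irreducible (Theorem~\ref{zd}), $L(c,h,h_{W})=V(c,h,h_{W})$, and Theorem~\ref{ver} applies. Otherwise the singular vector $u^{\prime}\in\mathcal{W}$ of Theorem~\ref{u'} exists, and by the contrapositive of Theorem~\ref{w irr} the module $L^{\prime}(c,h,h_{W})$ is irreducible, so $u^{\prime}$ generates the maximal submodule $J(c,h,h_{W})$ and Theorem~\ref{skroz red} applies. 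Either way $V_{\alpha,\beta,0}^{\prime}\otimes L(c,h,h_{W})$ is reducible; moreover the arguments in the proofs of Theorems~\ref{ver} and~\ref{skroz red} — in which the component of largest $v$-index of a hypothetical identity $\sum_{i}w_{i+1}(v_{n+i}\otimes v)=v_{n-1}\otimes v$ is shown to force $w_{\max}v=0$, impossible in a Verma module and useless when $w_{\max}\in U(\mathcal{L}_{-})u^{\prime}$ since $u^{\prime}\in\mathcal{W}$ — carry over verbatim with $v_{n}\otimes v$ in place of $v_{n-1}\otimes v$, yielding $v_{n}\otimes v\notin U_{n+1}$ for every $n$. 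Since $L_{1}(v_{n}\otimes v)=-(n+\alpha+2\beta)v_{n+1}\otimes v$ (and $L_{2}(v_{n}\otimes v)=-(n+\alpha+3\beta)v_{n+2}\otimes v$ disposes of the single index where $L_{1}$ degenerates, or a direct check does so in the two reducible subcases of $V_{\alpha,\beta,0}^{\prime}$), we get $U_{n+1}\subsetneq U_{n}$ for every $n$, so no $U_{n}$ is irreducible. By Corollary~\ref{t}, $V_{\alpha,\beta,0}^{\prime}\otimes L(c,h,h_{W})$ therefore has no irreducible submodule at all, a fortiori none with infinite-dimensional weight subspaces.

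The only step that is not a direct reference to earlier results is the infinite-dimensionality of $U_{1-p}$ in the congruence case, which I expect to be routine (via the character of $L(c,h,h_{W})$, or simply by quoting the remark after Theorem~\ref{malo red}), together with the minor bookkeeping at the indices where $L_{1}$ degenerates. I do not anticipate a real obstacle: the corollary is essentially a repackaging of Theorems~\ref{ver}, \ref{skroz red}, \ref{malo red} and~\ref{tenz irr} through the lens of Corollary~\ref{t}.
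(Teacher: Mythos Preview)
Your proof is correct and matches the paper's (implicit) approach: the corollary is stated without separate proof, immediately after the remark that $U_{1-p}$ in Theorem~\ref{malo red} has infinite-dimensional weight subspaces, as a direct repackaging of Theorems~\ref{ver}, \ref{tenz irr}, \ref{skroz red}, \ref{malo red} through Corollary~\ref{t}. Your explicit handling of the ``only if'' direction --- showing that each $U_n$ properly contains some $U_m$ and hence is not irreducible, then invoking Corollary~\ref{t} --- spells out precisely what the paper leaves to the reader.
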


\section{Subquotients of a module $V_{\alpha,\beta,0}^{\prime}\otimes
L(c,h,h_{W})$}

\begin{remark}
\label{kvocijenti}Let $U_{n}/U_{n+1}$ be a nontrivial subquotient in
$V_{\alpha,\beta,0}^{\prime}\otimes L(c,h,h_{W})$. Then
\begin{align*}
L_{k}(v_{n}\otimes v) &  =\lambda v_{n+k}\otimes v\in U_{n+k}\subseteq
U_{n+1},\\
W_{k}(v_{n}\otimes v) &  =0,\quad\text{ for }k>0;\\
L_{0}(v_{n}\otimes v) &  =(h-n-\alpha-\beta)v_{n}\otimes v\\
W_{0}(v_{n}\otimes v) &  =v_{n}\otimes W_{0}v=h_{W}v_{n}\otimes v
\end{align*}
shows that $U_{n}/U_{n+1}$ is highest weight module with the highest weight
$(c,h_{n},h_{W})$, where $h_{n}=h-n-\alpha-\beta$. From Theorem \ref{zd}
follows that $V(c,h_{n},h_{W})$ is reducible if and only if $V(c,h,h_{W})$ is
reducible. If that is the case, formula for a singular vector $u^{\prime}%
\in\mathcal{W}$ is the same in both modules, so $u^{\prime}(v_{n}\otimes
v)=0$. This shows $U_{n}/U_{n+1}$ is isomorphic to either to $L^{\prime
}(c,h_{n},h_{W})$, or to $L(c,h_{n},h_{W})$ (see Corollary \ref{L'}). Of
course, if $h_{n}\neq h_{W}\frac{(13p+1)(p-1)}{12}+\frac{(1-r)p}{2}$ for all
$r\in\mathbb{N}$, then these two modules are the same. Otherwise, the question
is weather $u(v_{n}\otimes v)\in U_{n+1}$.
\end{remark}

Let us review subquotients found in Theorems \ref{ver}, \ref{skroz red}, and
\ref{malo red}.

\begin{theorem}
\label{q1}Verma modules $V(c,h-\alpha-\beta-n,h_{W})$ occur as subquotients in
$V_{\alpha,\beta,0}^{\prime}\otimes V(c,h,h_{W})$ for any $n\in\mathbb{Z}$,
with exception of $n=-\alpha$ if $\alpha\in\mathbb{Z}$ and $\beta=0$, and
$n=-\alpha-1$ if $\alpha\in\mathbb{Z}$ and $\beta=1$.
\end{theorem}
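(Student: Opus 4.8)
The plan is to exploit the filtration $\{U_n\}_{n\in\mathbb{Z}}$ of $V_{\alpha,\beta,0}^{\prime}\otimes V(c,h,h_{W})$ and identify each nonzero subquotient $U_n/U_{n+1}$ explicitly. First I would recall from the proof of Theorem \ref{ver} that every $v_k\otimes v$ generates a proper submodule; more precisely, that argument shows $v_{n+k}\otimes w_{k+1}v\ne 0$ whenever $w_{k+1}\in U(\mathcal{L}_{-})_{-(k+1)}$ is nonzero, because $V(c,h,h_{W})$ is free over $U(\mathcal{L}_{-})$. This freeness is the key structural input: it means that in $V_{\alpha,\beta,0}^{\prime}\otimes V(c,h,h_{W})$ nothing ``collapses'' in the highest-weight direction, so no subsingular phenomena occur. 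Next I would set $U_n = U(\mathcal{L})(v_n\otimes v)$ and compute the action of $\mathcal{L}_0$ and $\mathcal{L}_+$ on the image of $v_n\otimes v$ in $U_n/U_{n+1}$, exactly as in Remark \ref{kvocijenti}: $L_k(v_n\otimes v)=\lambda v_{n+k}\otimes v\in U_{n+k}\subseteq U_{n+1}$ for $k>0$, $W_k(v_n\otimes v)=0$ for $k>0$, while $L_0$ acts by $h_n:=h-n-\alpha-\beta$ and $W_0$ acts by $h_W$. Hence $U_n/U_{n+1}$ is a highest weight module with highest weight $(c,h_n,h_W)$, so it is a quotient of $V(c,h_n,h_W)$.

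The heart of the argument is then to show that this quotient is in fact all of $V(c,h_n,h_W)$, i.e.\ that $U_n/U_{n+1}$ is free of rank one over $U(\mathcal{L}_{-})$, or equivalently that its character equals $q^{h_n}\prod_{k\ge1}(1-q^k)^{-2}$. I would do this by a dimension/character count on the whole module: each weight space $(V_{\alpha,\beta,0}^{\prime}\otimes V(c,h,h_{W}))_{h+m-\alpha-\beta}=\bigoplus_{j\ge 0}\mathbb{C}\,v_{j-m}\otimes V(c,h,h_W)_{h+j}$, so the full module has, in each ``diagonal'' weight, a filtration whose successive quotients are exactly the weight spaces of the various $U_n/U_{n+1}$. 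Comparing $\dim(V_{\alpha,\beta,0}^{\prime}\otimes V(c,h,h_W))_{*}=\sum_j \dim V(c,h,h_W)_{h+j}$ with the sum of $\dim V(c,h_n,h_W)_{*}$ over the relevant $n$ forces each subquotient to have the full Verma character, hence $U_n/U_{n+1}\cong V(c,h_n,h_W)$. One must be careful about which $n$ actually give nonzero subquotients: since $L_1(v_n\otimes v)=-(n+\alpha+2\beta)v_{n+1}\otimes v$, we have $U_n\supseteq U_{n+1}$ for $n\ne -\alpha-2\beta$, and the chain of inclusions together with $L_1,L_2$ (as in the Virasoro-type argument) shows the filtration is exhausting and separated, so $U_n/U_{n+1}\ne 0$ for all $n$ — except precisely when $V_{\alpha,\beta,0}^{\prime}$ itself is reducible, i.e.\ $\alpha\in\mathbb{Z}$ and $\beta\in\{0,1\}$, where one basis vector $v_0$ (resp.\ $v_{-1}$) is missing, which translates into the missing subquotient at $n=-\alpha$ (resp.\ $n=-\alpha-1$) after the shift $h_n=h-n-\alpha-\beta$.

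The main obstacle I anticipate is the bookkeeping in the last step: showing rigorously that the filtration $\{U_n\}$ is both separated ($\bigcap_n U_n=0$) and exhausting, and that on each diagonal the passage from the tensor-product weight space to the associated graded is an \emph{equality} of dimensions rather than merely an inequality. The separatedness is essentially the freeness argument of Theorem \ref{ver} applied uniformly; the exhaustiveness uses that $\{v_n\otimes v\}$ generates the module (Lemma \ref{teh}). Once these are in place, the character identity $q^{h}\sum_{m}q^{-m}\cdot\prod(1-q^k)^{-2}$ ``equals'' $\sum_{n}q^{h_n}\prod(1-q^k)^{-2}$ term by term, which pins down each subquotient. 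The only genuinely delicate point is the exceptional values of $n$, which I would handle by noting that the definition $V_{0,0,0}^{\prime}=V_{0,0,0}/\mathbb{C}v_0$ and $V_{0,1,0}^{\prime}=\bigoplus_{m\ne-1}\mathbb{C}v_m$ simply deletes one index from the spanning set, so after reindexing by $h_n$ one loses exactly the subquotient $V(c,h-\alpha-\beta-n,h_W)$ at $n=-\alpha$ (case $\beta=0$) or $n=-\alpha-1$ (case $\beta=1$), and nowhere else.
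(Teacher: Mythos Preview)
Your setup is right and matches the paper: you correctly identify $U_n/U_{n+1}$ as a highest weight module of weight $(c,h_n,h_W)$ (Remark \ref{kvocijenti}), hence a quotient of $V(c,h_n,h_W)$, and you correctly single out freeness of $V(c,h,h_W)$ over $U(\mathcal{L}_-)$ as the essential input. The gap is in the step where you promote ``quotient of Verma'' to ``equals Verma'' via a character/dimension count. Each weight space of $V_{\alpha,\beta,0}^{\prime}\otimes V(c,h,h_W)$ is infinite-dimensional, and on a fixed weight the filtration $(U_n)_\mu$ is never eventually zero (for every $n$ one has $0\neq L_{-(m+n)}(v_n\otimes v)\in (U_n)_\mu$). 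So both sides of your proposed comparison $\dim V_\mu=\sum_n\dim(U_n/U_{n+1})_\mu$ are $\aleph_0$, and equality of infinite cardinals gives no information about the individual summands. Separatedness and exhaustiveness alone do not force the associated graded to recover the space in a way that permits a term-by-term character identity; the formal ``character'' of the tensor product involves $\sum_{m\in\mathbb Z}q^{-m}$, which is not a well-defined power series.

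The paper bypasses this entirely with a direct argument that is essentially what you call ``the freeness argument of Theorem \ref{ver} applied uniformly'', but used to prove freeness of $U_n/U_{n+1}$ rather than merely separatedness. Write $U_{n+1}=\operatorname{span}\{x(v_{n+k}\otimes v):x\in U(\mathcal L_-),\,k>0\}$; any nonzero element of $U_{n+1}$ has a nonzero component $v_{n+k}\otimes xv$ at some index $n+k>n$ (take the maximal $k$ occurring, and use that $xv\neq0$ in the Verma module). On the other hand, for $0\neq y\in U(\mathcal L_-)$ the vector $y(v_n\otimes v)$ has components only at indices $\leq n$. Hence $y(v_n\otimes v)\notin U_{n+1}$, so the map $U(\mathcal L_-)\to U_n/U_{n+1}$, $y\mapsto y(v_n\otimes v)$, is injective and $U_n/U_{n+1}\cong V(c,h_n,h_W)$. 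This is a two-line argument once you have the ingredients you already listed; you should replace the character count with it. (The paper also notes that at $n=-\alpha-2\beta$ one should use $U_n/U_{n+2}$ instead, since $L_1(v_n\otimes v)=0$ there; your treatment of the exceptional $n$ in the reducible intermediate-series cases is otherwise fine.)
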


\begin{proof}
For any $n\in\mathbb{Z}$ (with noted exceptions) $U_{n}/U_{n+1}$ is the
highest weight module with the weight $(c,h-\alpha-\beta-n,h_{W})$. (If
$\alpha+2\beta+n=0$ then consider $U_{n}/U_{n+2}$ instead.) Let us prove that
this module is free over $U(\mathcal{L}_{-})$. Note that
\[
U_{n+1}=\operatorname*{span}\left\{  x\left(  v_{n+k}\otimes v\right)  :x\in
U\left(  \mathcal{L}_{-}\right)  ,\text{ }k>0\right\}
\]
and every $x\left(  v_{n+k}\otimes v\right)  $ has a component $v_{n+k}\otimes
xv\neq0$ since Verma module is free over $U\left(  \mathcal{L}_{-}\right)  $.
Suppose $U_{n}/U_{n+1}$ is not free. Then $y\in U(\mathcal{L}_{-})$ exists,
such that $y(v_{n}\otimes v)\in U_{n+1}$. But $y(v_{n}\otimes v)$ can not have
a component $v_{n+k}\otimes xv$ for $k>0$, proving a contradiction.
\end{proof}

\begin{theorem}
\label{q2}If a singular vector $u^{\prime}\in\mathcal{W}$ generates
$J(c,h,h_{W})$, then modules $L(c,h-\alpha-\beta-n,h_{W})$ occur as
subquotients in $V_{\alpha,\beta,0}^{\prime}\otimes L(c,h,h_{W})$ for any
$n\in\mathbb{Z}$, with exception of $n=-\alpha$ if $\alpha\in\mathbb{Z}$ and
$\beta=0$, and $n=-\alpha-1$ if $\alpha\in\mathbb{Z}$ and $\beta=1$.
\end{theorem}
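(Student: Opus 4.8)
The plan is to exhibit each module $L(c,h-\alpha-\beta-n,h_{W})$ as the irreducible head of one of the layers of the filtration $\cdots\supseteq U_{-1}\supseteq U_{0}\supseteq U_{1}\supseteq\cdots$, where $U_{n}=U(\mathcal{L})(v_{n}\otimes v)$. I would first record, exactly as in Remark \ref{kvocijenti}, that for every admissible index $n$ the layer $U_{n}/U_{n+1}$ is a highest weight module of highest weight $(c,h-\alpha-\beta-n,h_{W})$: indeed $W_{k}(v_{n}\otimes v)=0$ and $L_{k}(v_{n}\otimes v)\in U_{n+1}$ for $k>0$, while $L_{0}$ and $W_{0}$ act on $v_{n}\otimes v$ by the scalars $h-\alpha-\beta-n$ and $h_{W}$ respectively. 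Hence, as soon as this layer is nonzero, it possesses a unique irreducible quotient, which by definition is $L(c,h-\alpha-\beta-n,h_{W})$, and this module then occurs as a subquotient of $V_{\alpha,\beta,0}^{\prime}\otimes L(c,h,h_{W})$.

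The substance of the argument is therefore to show $U_{n}\supsetneq U_{n+1}$, and this is exactly where the hypothesis that $u^{\prime}\in\mathcal{W}$ generates $J(c,h,h_{W})$ enters. I would argue precisely as in the proof of Theorem \ref{skroz red}: if $U_{n}=U_{n+1}$ there would be $x\in U(\mathcal{L})$ with $x(v_{n+1}\otimes v)=v_{n}\otimes v$, hence homogeneous elements $x_{i}\in U(\mathcal{L}_{-})$ with $v_{n}\otimes v=\sum_{i\ge 0}x_{i+1}(v_{n+1+i}\otimes v)$. The component of maximal $v$-index in $x_{i+1}(v_{n+1+i}\otimes v)$ is $v_{n+1+i}\otimes x_{i+1}v$, so comparing the overall top term (index $n+1+k$, with $k$ the largest occurring) forces $x_{k+1}v=0$ in $L(c,h,h_{W})$, i.e. $x_{k+1}v\in J(c,h,h_{W})=U(\mathcal{L}_{-})u^{\prime}$, whence $x_{k+1}=yP$ in $U(\mathcal{L}_{-})$, where $u^{\prime}=Pv$ with $P$ a polynomial in the $W_{-m}$ alone (Theorem \ref{u'}). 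But then $x_{k+1}(v_{n+1+k}\otimes v)=y\bigl(v_{n+1+k}\otimes Pv\bigr)=y\bigl(v_{n+1+k}\otimes u^{\prime}\bigr)=0$ in $V_{\alpha,\beta,0}^{\prime}\otimes L(c,h,h_{W})$ because $u^{\prime}=0$ there, so the top term may be deleted and a downward induction on $k$ reduces to $k=0$, where $v_{n}\otimes v=x_{1}(v_{n+1}\otimes v)$ with $x_{1}\in\mathbb{C}L_{-1}+\mathbb{C}W_{-1}$; a glance at the $v_{n+1}$- and $v_{n}$-components then gives $x_{1}v\in J(c,h,h_{W})$ at level $1$, forcing $x_{1}=0$ (or $x_{1}\in\mathbb{C}W_{-1}$ when $p=1$), and in either case a contradiction with $x_{1}(v_{n+1}\otimes v)=v_{n}\otimes v\ne0$. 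This is the main point; the rest is bookkeeping.

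It remains to handle which indices $n$ are admissible, and the exceptions. The inclusion $U_{n+1}\subseteq U_{n}$ needed even to form the layer holds via $L_{1}(v_{n}\otimes v)=-(n+\alpha+2\beta)\,v_{n+1}\otimes v$ for $n\ne-\alpha-2\beta$; at the remaining index I would pass instead to $U_{n}/U_{n+2}$, using $L_{2}(v_{n}\otimes v)=-\beta\,v_{n+2}\otimes v$ when $\beta\ne0$, which is again a nonzero highest weight layer of the same highest weight by the argument above. Using $V_{\alpha,\beta,0}^{\prime}\cong V_{\alpha+k,\beta,0}^{\prime}$ we may take $\alpha=0$ whenever $\alpha\in\mathbb{Z}$; if moreover $\beta=0$ then $v_{0}$ is absent from $V_{0,0,0}^{\prime}$, the chain reads $\cdots\supseteq U_{-1}\supseteq U_{1}\supseteq\cdots$, no layer carries highest weight $(c,h-\alpha-\beta,h_{W})$, and this is the exception $n=-\alpha$; the case $\beta=1$, where $v_{-1}$ is missing, is identical and yields $n=-\alpha-1$. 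I expect the only mildly delicate part to be not a conceptual one but keeping straight the interaction between a missing weight vector of $V_{\alpha,\beta,0}^{\prime}$ and the degenerate index $-\alpha-2\beta$ at which $L_{1}$ annihilates $v_{n}\otimes v$.
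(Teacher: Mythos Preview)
Your proposal is correct and follows essentially the same approach as the paper: the paper's one-line proof simply cites Theorem~\ref{skroz red} (that $U_{n}\supsetneq U_{n+1}$ under the hypothesis on $u'$) and Remark~\ref{kvocijenti} (that each nonzero layer $U_{n}/U_{n+1}$ is a highest weight module of highest weight $(c,h-\alpha-\beta-n,h_{W})$ annihilated by $u'$), together with the observation that $L(c,h_n,h_W)$ is a quotient of $L'(c,h_n,h_W)$, and you have faithfully unpacked exactly these ingredients. Your treatment of the termination in the downward induction and of the exceptional indices is slightly more explicit than the paper's, but the logic is the same.
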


\begin{proof}
Follows from Theorem \ref{skroz red}, Remark \ref{kvocijenti} and the fact
that $L(c,h-\alpha-\beta-n,h_{W})$ is a quotient of $L^{\prime}(c,h-\alpha
-\beta-n,h_{W})$.
\end{proof}

\begin{theorem}
\label{q3}If there is a subsingular vector $u\in V(c,h,h_{W})$ such that
$\overline{u}=L_{-p}^{r}v$ and if $\alpha+\left(  1-p\right)  \beta
\in\mathbb{Z}$ then $L(c,h+(j-\beta)p,h_{W})$ occur as subquotients in
$V_{\alpha,\beta,0}^{\prime}\otimes L(c,h,h_{W})$ for $j=1,\ldots,r$, with the
exception of $L(c,h,0)$ in $V_{0,1,0}^{\prime}\otimes L(c,h,0)$.
\end{theorem}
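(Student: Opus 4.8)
The plan is to extract the subquotients from the submodule filtration $U_n=U(\mathcal{L})(v_n\otimes v)$ of $V_{\alpha,\beta,0}^{\prime}\otimes L(c,h,h_W)$ that was already analysed in the proof of Theorem \ref{malo red}. Since $\alpha$ is only defined modulo $\mathbb{Z}$, I would first normalize so that $\alpha+(1-p)\beta=0$; then $\alpha+\beta=p\beta$, and the highest weight $h_{-jp}=h-(-jp)-\alpha-\beta$ from Remark \ref{kvocijenti} becomes exactly $h+(j-\beta)p$. From the proof of Theorem \ref{malo red} the chain $\{U_n\}$ is constant away from the $r$ strict drops $U_{1-jp}\subsetneq U_{-jp}$, $j=1,\ldots,r$, so the nonzero successive quotients of this chain are precisely the modules $U_{-jp}/U_{1-jp}$.

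Next I would identify these quotients via Remark \ref{kvocijenti}. Each $U_{-jp}/U_{1-jp}$ is a highest weight module generated by the image of $v_{-jp}\otimes v$, of highest weight $(c,h+(j-\beta)p,h_W)$; and since $u^{\prime}\in\mathcal{W}$ we have $u^{\prime}(v_{-jp}\otimes v)=v_{-jp}\otimes u^{\prime}v=0$ in the tensor product, so $U_{-jp}/U_{1-jp}$ is a nonzero quotient of $L^{\prime}(c,h+(j-\beta)p,h_W)$. By Corollary \ref{L'} any such quotient has $L(c,h+(j-\beta)p,h_W)$ as its unique irreducible quotient, hence as a subquotient; this proves the statement for all $j=1,\ldots,r$ provided every index $-jp$ actually occurs.

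Finally I would account for the index that may be missing. If $V_{\alpha,\beta,0}$ is irreducible all integers occur as indices, so the previous step finishes the proof; otherwise $\alpha\in\mathbb{Z}$ and $\beta\in\{0,1\}$, and after the normalization the single index absent from $V_{\alpha,\beta,0}^{\prime}$ is $0$ (if $\beta=0$) or $-p$ (if $\beta=1$). When $\beta=0$ this absent index is never one of $-p,\ldots,-rp$, so nothing changes. When $\beta=1$ --- that is, for $V_{0,1,0}^{\prime}$ --- the absent index is $-p=-jp$ with $j=\beta=1$, so the successive quotient of highest weight $h_{-p}=h$ does not appear, which is exactly the stated exception $L(c,h,0)$ in $V_{0,1,0}^{\prime}\otimes L(c,h,0)$. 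I expect this $V_{0,1,0}^{\prime}$ case to be the main obstacle: since $V_{0,1,0}^{\prime}\otimes L(c,h,h_W)$ sits inside $V_{0,1,0}\otimes L(c,h,h_W)$ as the submodule obtained by deleting $v_{-p}$, one must check by a direct computation with the defining relations that for $j=2,\ldots,r$ the image of $v_{-jp}\otimes v$ still generates a nonzero highest weight module of highest weight $(c,h+(j-1)p,h_W)$ modulo the next member of the chain, so that the remaining $r-1$ modules $L(c,h+(j-1)p,h_W)$ do survive as subquotients.
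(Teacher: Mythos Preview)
Your proposal is correct and is exactly the approach of the paper, whose own proof is the single sentence ``Follows from Remark~\ref{kvocijenti} and Theorem~\ref{malo red}''; your write-up is simply the unpacking of that reference, including the normalization $\alpha+(1-p)\beta=0$ already used in the proof of Theorem~\ref{malo red}. Your closing worry about $j=2,\ldots,r$ in the $V_{0,1,0}^{\prime}$ case is more caution than the paper exercises, and in fact no extra computation is needed: $V_{0,1,0}^{\prime}$ is a genuine $\mathcal{L}$-submodule of $V_{0,1,0}$ (the coefficient that would land on the missing $v_{-p}$ is automatically zero), so the argument of Theorem~\ref{malo red} establishing $U_{1-jp}\subsetneq U_{-jp}$ carries over verbatim for those $j$.
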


\begin{proof}
Follows from Remark \ref{kvocijenti} and Theorem \ref{malo red}.
\end{proof}

\begin{remark}
We expect the existence of intertwining operators of type
\[
\binom{L(c,h+(r-\beta)p,h_{W})}{L(c,1-\beta,0)\text{\quad}L(c,h,h_{W})}.
\]
This would result with an elegant proof of Theorems \ref{malo red} and
\ref{q3}.
\end{remark}

\begin{corollary}
$V_{\alpha,\beta,0}^{\prime}\otimes L(c,0,0)$ is irreducible if and only if
$\alpha\notin\mathbb{Z}$. If $2\beta-1\notin\mathbb{N}$ then
\[
\left(  V_{0,\beta,0}^{\prime}\otimes L(c,0,0)\right)  /U_{0}\cong
L(c,1-\beta,0).
\]

\end{corollary}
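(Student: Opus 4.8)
The plan is to read off both statements from Section~\ref{irr tezor}. Since $c\neq0$, the equation $2h_{W}+\frac{p^{2}-1}{12}c=0$ with $h_{W}=0$ forces $p=1$; moreover $L^{\prime}(c,0,0)$ is reducible, since the image of $L_{-1}v$ is a nonzero singular vector in it (it has no $W_{-1}$ factor, so it lies outside $J^{\prime}(c,0,0)$ by Lemma~\ref{nj}, and $\mathcal{L}_{+}L_{-1}v=0$ as $h=h_{W}=0$). By Theorems~\ref{w irr} and \ref{dov} there is then a subsingular vector $u$ in $V(c,0,0)$ with $\overline{u}=L_{-p}^{r}v$, and the necessary condition $0=\frac{1-r}{2}$ forces $r=1$; explicitly $u=L_{-1}v$. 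Hence Theorem~\ref{main} applies with $p=r=1$ and yields that $V_{\alpha,\beta,0}^{\prime}\otimes L(c,0,0)$ is irreducible if and only if $\alpha+(1-p)\beta=\alpha\notin\mathbb{Z}$, which is the first assertion.

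For the second assertion assume $\alpha=0$ and $\beta\notin\{0,1\}$ (the value $\beta=1$ is excluded by $2\beta-1\notin\mathbb{N}$; $\beta=0$ is addressed below). Put $M=V_{0,\beta,0}^{\prime}\otimes L(c,0,0)$ and $\xi=v_{-1}\otimes v+U_{0}\in M/U_{0}$. In $L(c,0,0)$ one has $L_{-1}v=0$ and $W_{-1}v=0$ (these span part of the maximal submodule; cf.\ Theorem~\ref{u'} and the PBW basis in Corollary~\ref{wkrit} applied to $(c,0,0)$), and $W$ acts trivially on $V_{0,\beta,0}^{\prime}$, so for all $n\in\mathbb{Z}$
\[
W_{-1}(v_{n}\otimes v)=0,\qquad L_{-1}(v_{n}\otimes v)=-n\,(v_{n-1}\otimes v).
\]
The second identity gives $v_{n}\otimes v\in U(\mathcal{L})(v_{-1}\otimes v)$ for every $n\leq-1$. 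Together with $L_{1}(v_{n}\otimes v)=-(n+2\beta)\,v_{n+1}\otimes v$ and $L_{2}(v_{n}\otimes v)=-(n+3\beta)\,v_{n+2}\otimes v$, a short induction starting from $v_{0}\otimes v$ (using $L_{2}$ only to step over the single possible integral zero $n=-2\beta$ of the $L_{1}$-coefficient, which for $\beta\neq1$ is never simultaneously a zero of the relevant $L_{2}$-coefficient) shows $v_{n}\otimes v\in U_{0}$ for every $n\geq1$. Since $M$ is generated by $\{v_{n}\otimes v:n\in\mathbb{Z}\}$ (Lemma~\ref{teh}), it follows that $M/U_{0}=U(\mathcal{L})\xi$.

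Next, $M/U_{0}$ is a highest weight module of highest weight $(c,1-\beta,0)$: by Remark~\ref{kvocijenti} one has $L_{0}(v_{-1}\otimes v)=(1-\beta)(v_{-1}\otimes v)$ and $W_{0}(v_{-1}\otimes v)=0$; moreover $L_{1}(v_{-1}\otimes v)=(1-2\beta)\,v_{0}\otimes v\in U_{0}$ and $L_{2}(v_{-1}\otimes v)=(1-3\beta)\,v_{1}\otimes v\in U_{0}$ by the previous paragraph, while $W_{1},W_{2}$ annihilate $v_{-1}\otimes v$; since $L_{1},L_{2},W_{1},W_{2}$ generate $U(\mathcal{L}_{+})$ we get $\mathcal{L}_{+}\xi=0$. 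As $W_{-1}\xi=0$, the natural surjection $V(c,1-\beta,0)\twoheadrightarrow M/U_{0}$ kills $U(\mathcal{L})W_{-1}v=J^{\prime}(c,1-\beta,0)$, so $M/U_{0}$ is a quotient of $L^{\prime}(c,1-\beta,0)$. Now $2\beta-1\notin\mathbb{N}$ is precisely the condition $1-\beta\neq h_{W}+\frac{(13p+1)(p-1)}{12}+\frac{(1-r)p}{2}$ (with $p=1$, $h_{W}=0$ the right side is $\frac{1-r}{2}$) for all $r\in\mathbb{N}$, so Corollary~\ref{wkrit} gives $L^{\prime}(c,1-\beta,0)=L(c,1-\beta,0)$, which is irreducible. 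Finally $\xi\neq0$, because Theorem~\ref{malo red} gives $U_{0}\subsetneq U_{-1}$, whence $v_{-1}\otimes v\notin U_{0}$. A nonzero quotient of an irreducible module is isomorphic to it, so $M/U_{0}\cong L(c,1-\beta,0)$.

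The main obstacle is the combinatorial bookkeeping in the induction $v_{n}\otimes v\in U_{0}$ for $n\geq1$: one must verify that, under $2\beta-1\notin\mathbb{N}$ and $\beta\neq0$, the scalars $n+2\beta$ and $n+3\beta$ never obstruct the inductive step at the same index. The boundary value $\beta=0$ is genuinely degenerate --- there $v_{0}=0$ in $V_{0,0,0}^{\prime}$, so $U_{0}=0$ --- and must be examined separately.
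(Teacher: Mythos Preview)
Your argument follows the same route as the paper's, only with the details spelled out: the paper's one-line proof (``(Sub)singular vector in $V(c,0,0)$ is $L_{-1}v$ so we apply Theorem~\ref{q3}'') is really an appeal to Theorem~\ref{main} for the irreducibility criterion and to the combination of Theorem~\ref{malo red}, Remark~\ref{kvocijenti} and Corollary~\ref{wkrit} for the quotient, exactly as you unwind it. One small slip: where you justify $L_{-1}v=W_{-1}v=0$ in $L(c,0,0)$ you cite Corollary~\ref{wkrit} applied to $(c,0,0)$, but that corollary does not apply there (the hypothesis fails at $r=1$); the correct reference is Theorem~\ref{glavni}(ii), which gives the PBW basis of $L(c,0,0)$ without $W_{-1}$ or $L_{-1}$.

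Your caution at $\beta=0$ is not only warranted but decisive: the second assertion of the corollary is \emph{false} as stated at $\beta=0$. Indeed $v_{0}=0$ in $V_{0,0,0}^{\prime}$, so $U_{0}=0$ and $\left(V_{0,0,0}^{\prime}\otimes L(c,0,0)\right)/U_{0}=V_{0,0,0}^{\prime}\otimes L(c,0,0)$, which has infinite-dimensional weight spaces and hence cannot be isomorphic to $L(c,1,0)$. (Concretely, the weight-$1$ space contains $v_{-1}\otimes v$ together with $v_{n}\otimes L(c,0,0)_{n+1}$ for all $n\geq1$.) The exclusion $2\beta-1\notin\mathbb{N}$ does not remove $\beta=0$, so the paper's statement should carry the additional restriction $\beta\neq0$; the analogous exception already appears explicitly in Theorems~\ref{q1} and~\ref{q2}. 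With $\beta\neq0$ your inductive bookkeeping showing $v_{n}\otimes v\in U_{0}$ for $n\geq1$ is clean: the only integer obstruction $k=1-2\beta$ forces $k\geq2$ (since $\beta<0$ there), and $1-2\beta=2-3\beta$ would require $\beta=1$, which is already excluded.
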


\begin{proof}
(Sub)singular vector in $V(c,0,0)$ is $L_{-1}v$ so we apply Theorem \ref{q3}.
\end{proof}

Let $2\beta-1=r^{\prime}\in\mathbb{N}$. Then $\left(  V_{0,\beta,0}^{\prime
}\otimes L(c,0,0)\right)  /U_{0}$ is the highest weight module with the
highest weight $(c,\frac{1-r^{\prime}}{2},0)$. Assume there is a subsingular
vector $u\in V(c,\frac{1-r^{\prime}}{2},0)$ (see Conjecture \ref{slut}) such
that $\overline{u}=L_{-1}^{r^{\prime}}v$ and that $L_{k}$ does not occur as a
factor in $u$, for $k>1$ (see Conjecture \ref{slutnja}). Then $u=(L_{-1}%
^{r^{\prime}}+\sum_{i=0}^{r^{\prime}-1}w_{i}L_{-1}^{i})v$ for some $w_{i}%
\in\mathcal{W}$ so
\[
u(v_{-1}\otimes v)=\sum_{i=0}^{r^{\prime}}(-1)^{i}i!v_{-1-i}\otimes w_{i}v.
\]
Since $L_{-1}(v_{0}\otimes v)=W_{-1}(v_{0}\otimes v)=0$, we get
\[
U_{0}=\operatorname*{span}\left\{  x(v_{k}\otimes v):k\in\mathbb{N},x\in
U(\mathcal{L}_{-}\setminus\{L_{-1},W_{-1}\}\right\}  ,
\]
and every $x(v_{k}\otimes v)$ has a component $v_{k}\otimes xv\neq0$. Then,
obviously, $u(v_{-1}\otimes v)\notin U_{0}$ so
\[
\left(  V_{0,\frac{1+r^{\prime}}{2},0}^{\prime}\otimes L(c,0,0)\right)
/U_{0}\cong L^{\prime}(c,\frac{1-r^{\prime}}{2},0).
\]

\begin{remark}
Since intertwining operators of types
\[
\binom{L(c,1-\beta,0)}{L(c,1-\beta,0)\text{\quad}L(c,0,0)},\text{ and }%
\binom{L^{\prime}(c,\frac{1-r^{\prime}}{2},0)}{L^{\prime}(c,\frac{1-r^{\prime
}}{2},0)\text{\quad}L(c,0,0)}%
\]
exist, there are nontrivial $\mathcal{L}$-homomorphisms
\begin{align*}
V_{0,\beta,0}^{\prime}\otimes L(c,0,0)  &  \rightarrow L(c,1-\beta,0),\\
V_{0,\frac{1+r^{\prime}}{2},0}^{\prime}\otimes L(c,0,0)  &  \rightarrow
L^{\prime}(c,\frac{1-r^{\prime}}{2},0).
\end{align*}

\end{remark}

\section{The twisted Heisenberg-Virasoro algebra\label{HV}}

The twisted Heisenberg-Virasoro algebra is the universal central extension of
the Lie algebra of differential operators on a circle of order at most one:
\[
\left\{  f(t)\frac{d}{dt}+g(t):f,g\in\mathbb{C}[t,t^{-1}]\right\}  .
\]
It has an infinite-dimensional Heisenberg subalgebra and a Virasoro
subalgebra. Its highest weight representations have been studied by
E.\ Arbarello et al.\ in \cite{Arbarello} and Billig in \cite{Billig}. In this
paper we focus on zero level case (trivial action of central element of the
Heisenberg subalgebra), studied in \cite{Billig}. These representations occur
in the construction of modules for the toroidal Lie algebras (see
\cite{Billig2}). Our goal is to find irreducible representations with
infinite-dimensional weight spaces. As is case with $W(2,2)$ we study tensor
product of an intermediate series and the highest weight module and use
singular vectors in Verma modules to check irreducibility. However, there are
no subsingular vectors in Verma modules over the Heisenberg-Virasoro algebra.
Still, results are quite similar to those for $W(2,2)$ algebra when Heisenberg
subalgebra acts trivially on an intermediate series. We omit some details
since most proofs are analogous to $W(2,2)$ case.

\bigskip

The twisted Heisenberg-Virasoro algebra $\mathcal{H}$ is a complex Lie algebra
with basis
\[
\{L_{n},I_{n}:n\in\mathbb{Z}\}\cup\{C_{L},C_{LI},C_{I}\}
\]
and Lie bracket
\begin{gather*}
\left[  L_{n},L_{m}\right]  =(n-m)L_{n+m}+\delta_{n,-m}\frac{n^{3}-n}{12}%
C_{L},\\
\left[  L_{n},I_{m}\right]  =-mI_{n+m}-\delta_{n,-m}(n^{2}+n)C_{LI},\\
\left[  I_{n},I_{m}\right]  =n\delta_{n,-m}C_{I},\\
\left[  \mathcal{H},C_{L}\right]  =\left[  \mathcal{H},C_{LI}\right]  =\left[
\mathcal{H},C_{I}\right]  =0.
\end{gather*}
Obviously, $\{L_{n},C_{L}:n\in\mathbb{Z}\}$ spans a Virasoro subalgebra, and
$\{I_{n},C_{I}:n\in\mathbb{Z}\}$ spans a Heisenberg subalgebra. Center of
$\mathcal{H}$ is spanned by $\{I_{0},C_{L},C_{I},C_{LI}\}$ and, unlike $W(0)$
in $W(2,2)$, $I(0)$ acts semisimply on $\mathcal{H}$. Standard $\mathbb{Z}%
$-gradation
\begin{align*}
\mathcal{H}_{n}  &  =\mathbb{C}L_{n}\oplus\mathbb{C}I_{n},\quad n\in
\mathbb{Z}^{\ast},\\
\mathcal{H}_{0}  &  =\mathbb{C}L_{0}\oplus\mathbb{C}I_{0}\oplus\mathbb{C}%
C_{L}\oplus\mathbb{C}C_{I}\oplus\mathbb{C}C_{LI}%
\end{align*}
induces a triangular decomposition
\[
\mathcal{H}=\mathcal{H}_{-}\oplus\mathcal{H}_{0}\oplus\mathcal{H}_{+}%
\]
as usual.

Let $U(\mathcal{H})$ be a universal enveloping algebra. For arbitrary
$h,h_{I},c_{L},c_{I},c_{LI}\in\mathbb{C}$, let $\mathcal{I}$ be a left ideal
in $U(\mathcal{H})$ generated by $\{L_{n},I_{n},L_{0}-h\mathbf{1},I_{0}%
-h_{I}\mathbf{1},C_{L}-c_{L}\mathbf{1},C_{I}-c_{I}\mathbf{1},C_{LI}%
-c_{LI}\mathbf{1}:n\in\mathbb{N}\}$. Then $U(\mathcal{H})/\mathcal{I}$ is a
Verma module denoted with $V(c_{L},c_{I},c_{LI},h,h_{I})$. As usual, it is a
free $U(\mathcal{H}_{-})$-module generated by the highest weight vector
$v:=\mathbf{1}+\mathcal{I}$ and a standard PBW\ basis
\[
\{I_{-m_{s}}\cdots I_{-m_{1}}L_{-n_{t}}\cdots L_{-n_{1}}v:m_{s}\geq\cdots\geq
m_{1}\geq1,n_{t}\geq\cdots\geq n_{1}\geq1\}.
\]
It contains a unique maximal submodule $J(c_{L},c_{I},c_{LI},h,h_{I})$, and
respective quotient module $L(c_{L},c_{I},c_{LI},h,h_{I})=V(c_{L},c_{I}%
,c_{LI},h,h_{I})/J(c_{L},c_{I},c_{LI},h,h_{I}) $ is irreducible. In this paper
we focus on zero level highest weight representations, i.e., $c_{I}=0$ case.

\begin{theorem}
[\cite{Billig}]\label{HV ireducibilnost}Let $c_{I}=0$ and $c_{LI}\neq0$.

\begin{description}
\item[(i)] If $\frac{h_{I}}{c_{LI}}\notin\mathbb{Z}$ or $\frac{h_{I}}{c_{LI}%
}=1$, then Verma module $V(c_{L},0,c_{LI},h,h_{I})$ is irreducible.

\item[(ii)] If $\frac{h_{I}}{c_{LI}}\in\mathbb{Z\setminus\{}1\}$ then
$V(c_{L},0,c_{LI},h,h_{I})$ possesses a singular vector $u\in V_{p}$, where
$p=|\frac{h_{I}}{c_{LI}}-1|$. In this case, a quotient module $L(c_{L}%
,0,c_{LI},h,h_{I})=V(c_{L},0,c_{LI},h,h_{I})/U(\mathcal{H}_{-})u$ is irreducible.
\end{description}
\end{theorem}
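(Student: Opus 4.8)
The plan is to adapt the $W(2,2)$ techniques of Section \ref{Verma} to $\mathcal{H}$, where the situation is in fact simpler because $I_0$ acts semisimply. I would first treat the irreducibility claims in (i): when $h_I/c_{LI}\notin\mathbb{Z}$ or equals $1$, I would show that every weight subspace of $V(c_L,0,c_{LI},h,h_I)$ pairs nondegenerately with itself under the contravariant (Shapovalov-type) form, equivalently that no nonzero singular vector exists at any positive level. The key computation is the analogue of Lemma \ref{key}: for a nonzero $x$ with lowest $I$-degree component $\overline{x}$, applying $I_n$ or $L_n$ produces a controlled leading term. Specifically, the bracket $[I_n,I_{-n}]=n\,\delta C_I=0$ at zero level, while $[L_n,I_{-n}]=nI_0-\delta(n^2+n)C_{LI}$ acts on $v$ as the scalar $n h_I-(n^2-n)c_{LI}=n\bigl(h_I-(n-1)c_{LI}\bigr)$ — wait, one must be careful with signs, but the upshot is a factor proportional to $h_I-(p-1)c_{LI}$-type expressions. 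If $h_I/c_{LI}$ avoids the bad integers, all such factors are nonzero, and an induction on $I$-degree (mirroring the proof of Theorem \ref{u'} and Theorem \ref{w irr}) forces any singular vector to lie in $\mathbb{C}v$. The exceptional value $h_I/c_{LI}=1$ gives $p=0$, so there is simply no room for a nontrivial singular vector at positive level.

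Next, for part (ii), with $p=|h_I/c_{LI}-1|\in\mathbb{N}$, I would produce the singular vector $u\in V_p$ exactly as in Lemma \ref{w} and Theorem \ref{u'}: the vanishing factor $h_I-(p)c_{LI}=0$ (or its sign-variant) allows one to solve recursively for the coefficients of a vector $u$ with leading term $I_{-p}v$ (in the $I$-degree $1$ component) annihilated by $\mathcal{H}_+$. Because $I_0$ acts semisimply here, there is no $W_0$-type obstruction, so — unlike the $W(2,2)$ case — the submodule $U(\mathcal{H}_-)u$ is genuinely isomorphic to a Verma module $V(c_L,0,c_{LI},h+p,h_I)$, and in particular there are \emph{no} subsingular vectors. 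I would then show $U(\mathcal{H}_-)u$ is the unique maximal submodule: arguing as in Theorem \ref{w irr} and the main theorem's proof, any singular vector in the quotient $V/U(\mathcal{H}_-)u$ would have to have leading term a power of $L_{-p}$, but the analogue of Theorem \ref{dov}'s weight computation (now with no free parameter $h$ once $p$ is fixed by $h_I/c_{LI}$, since $h$ is still free but the consistency equation has no solution — or rather, since $I_0$ is central and semisimple, the obstruction to a subsingular vector never vanishes) rules this out. A cleaner route: show directly via the character $\operatorname{char}L = q^h(1-q^p)\prod(1-q^k)^{-2}$ that the quotient by the singular vector already has the right size, then verify irreducibility by the nondegeneracy-of-the-form argument applied to the quotient.

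\textbf{The main obstacle} I anticipate is pinning down exactly why no subsingular vectors arise, i.e.\ the analogue of showing that the quotient $L(c_L,0,c_{LI},h,h_I)$ is already irreducible rather than merely having a singular vector removed. In the $W(2,2)$ world this failed and produced the whole subsingular-vector phenomenon; here the assertion is that the semisimplicity of $I_0$ rescues us. Concretely, the leading-term analysis forces a putative secondary singular vector to have leading term $L_{-p}^r v$, and one must run the Theorem \ref{dov}-style computation: apply $L_p$ and $L_i$ ($1\le i\le p-1$) and extract the coefficient of $L_{-p}^{r-1}v$, solving for the auxiliary coefficients $\lambda_i$ along the way. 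The payoff is a consistency equation in $h$; I expect that because $[I_n,I_m]$ contributes nothing at zero level, the equation reduces to something like $rp(2h+\ldots)=0$ with the $h$-dependence surviving in a way that, combined with $h$ being an independent free parameter, means the equation cannot hold identically — hence no subsingular vector, confirming that the singular vector of (ii) already cuts out the maximal submodule. Since the paper says "we omit some details since most proofs are analogous," I would present the singular-vector construction and the leading-term lemma explicitly and refer to Section \ref{Verma} for the irreducibility bookkeeping.
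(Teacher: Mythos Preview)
The paper does not supply a proof of this theorem at all; it is quoted from Billig \cite{Billig} as a known result, with only the statement and no argument. There is therefore no proof in this paper to compare your proposal against.

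That said, your plan has a genuine gap in the place you yourself flag as the ``main obstacle.'' Your proposed reason for the absence of subsingular vectors --- that the Theorem \ref{dov}-style consistency equation in $h$ ``cannot hold identically'' because $h$ is a free parameter --- does not distinguish the Heisenberg--Virasoro case from $W(2,2)$. In $W(2,2)$ the parameter $h$ is equally free, and the equation $h=h_W+\frac{(13p+1)(p-1)}{12}+\frac{(1-r)p}{2}$ \emph{does} have solutions, producing genuine subsingular vectors. So the heuristic you offer would, if valid, wrongly rule out subsingular vectors in $W(2,2)$ as well. The real difference lies in the bracket $[L_n,I_{-m}]=mI_{n-m}-\delta_{n,m}(n^2+n)C_{LI}$ versus $[L_n,W_{-m}]=(n+m)W_{n-m}+\delta_{n,m}\frac{n^3-n}{12}C$: the structure constants differ, and this changes the leading-term calculus (the analogue of Lemma \ref{key}) enough that a putative secondary singular vector in the quotient is killed outright rather than surviving along a hypersurface in $h$. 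Moreover, Theorem \ref{HV sing} already shows that the two sign cases of $p=\lvert h_I/c_{LI}-1\rvert$ behave differently --- in one the singular vector has $\overline{u}=L_{-p}v$, in the other $\overline{u}=I_{-p}v$ with $u\in\mathcal{I}$ --- so a uniform ``run Theorem \ref{dov}'' argument is not the right shape. To make your proposal work you would need to carry out the leading-term analysis explicitly in each sign case and show that no nontrivial singular vector survives in the quotient, rather than invoking a nonexistent obstruction in $h$.
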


Define $I$-degree as follows:
\[
\deg_{I}L_{-n}=0,\text{\quad}\deg_{I}I_{-n}=1,
\]
which induces $\mathbb{Z}$-grading on $U(\mathcal{H})$ and on $V$
\[
\deg_{I}I_{-m_{s}}\cdots I_{-m_{1}}L_{-n_{t}}\cdots L_{-n_{1}}v=s.
\]
For a non-zero $x\in V$ given in standard PBW basis we denote by $\overline
{x}$ its lowest non-zero homogeneous component with respect to $I$-degree.

Also, let $\mathcal{I}=\left\{  I_{-m_{s}}\cdots I_{-m_{1}}v:m_{s}\geq
\cdots\geq m_{1}\geq1\right\}  $. By abuse of notation, we will sometimes
consider $\mathcal{I}$ as a subalgebra of $U(\mathcal{H}_{-})$. We write $V$
short for $V(c_{L},0,c_{LI},h,h_{I})$.

\begin{theorem}
[\cite{Billig}]\label{HV sing}Let $u\in V_{p}$ a singular vector. If
$1-\frac{h_{I}}{c_{LI}}=p\in\mathbb{N}$ then $\overline{u}=L_{-p}v$. If
$\frac{h_{I}}{c_{LI}}-1=p\in\mathbb{N}$ then $\overline{u}=I_{-p}v$ and $u\in
V_{p}\cap\mathcal{I}$.
\end{theorem}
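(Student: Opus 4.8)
The plan is to follow the $W(2,2)$ template of Lemma \ref{w} and Theorem \ref{u'}, working with the filtration by $I$-degree; existence of $u$ is supplied by Theorem \ref{HV ireducibilnost}(ii), and we keep in mind $c_{I}=0$, $c_{LI}\neq0$, $p=|\tfrac{h_{I}}{c_{LI}}-1|$, the two sub-cases of the statement being exactly $\tfrac{h_{I}}{c_{LI}}\in\{0,-1,\dots\}$ and $\tfrac{h_{I}}{c_{LI}}\in\{2,3,\dots\}$. Everything rests on two scalar commutators: $[I_{n},L_{-n}]$ acts on $V$ as multiplication by $n(h_{I}+(n-1)c_{LI})$, which vanishes exactly when $n=1-\tfrac{h_{I}}{c_{LI}}$; and $[L_{m},I_{-m}]$ acts as $m(h_{I}-(m+1)c_{LI})$, which vanishes exactly when $m=\tfrac{h_{I}}{c_{LI}}-1$; moreover $[I_{n},I_{m}]=0$. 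First I would prove the $\mathcal{H}$-analogue of Lemma \ref{key}, by a computation parallel to the $W(2,2)$ case: for $0\neq x\in V$ with $\deg_{I}\overline{x}=k$, (a) if $\overline{x}$ carries an $L$-factor and $n$ is the least index with $L_{-n}$ occurring in $\overline{x}$, the $I$-degree-$k$ part of $I_{n}x$ equals $n(h_{I}+(n-1)c_{LI})\,\tfrac{\partial\overline{x}}{\partial L_{-n}}$; (b) if $\overline{x}\in\mathcal{I}\setminus\mathbb{C}v$ and $m$ is the greatest index with $I_{-m}$ occurring in $\overline{x}$, the $I$-degree-$(k-1)$ part of $L_{m}x$ equals $m(h_{I}-(m+1)c_{LI})\,\tfrac{\partial\overline{x}}{\partial I_{-m}}$. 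These formulas are clean precisely because $c_{I}=0$: a positive $I$-mode created inside a commutator slides through all $I$-factors of a PBW monomial, the central correction in $[I_{a},L_{-l}]$ fires only when $l=a$ (impossible for the negative mode $a=n-m'$ produced in (a)), and no central term can pull the $I$-degree below that of $\overline{x}$ — so only $\overline{x}$, not the higher $I$-degree part of $x$, feeds the graded piece named.

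Now suppose $1-\tfrac{h_{I}}{c_{LI}}=p\in\mathbb{N}$, so the scalar in (a) is $nc_{LI}(n-p)$, zero only at $n=p$, while the scalar in (b) is $-mc_{LI}(m+p)\neq0$ for all $m\geq1$. Write the homogeneous component $\overline{u}=A+B$ with $A$ the part carrying an $L$-factor and $B\in\mathcal{I}$. If $\deg_{I}\overline{u}\geq1$ and $A\neq0$: applying (a) with $n$ the least $L$-index in $A$ and using $I_{n}u=0$ forces $n=p$, but then any monomial of $A$ has all $L$-indices $\geq p$ and total degree $p$, hence equals $L_{-p}v$, which has $I$-degree $0$ — contradicting homogeneity of $A$ in $I$-degree $\geq1$; so $A=0$, $\overline{u}\in\mathcal{I}\setminus\mathbb{C}v$, and then (b) with the greatest $I$-index $m$ gives $-mc_{LI}(m+p)\tfrac{\partial\overline{u}}{\partial I_{-m}}=0$, impossible. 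Hence $\deg_{I}\overline{u}=0$, i.e. $\overline{u}\in U(\operatorname*{Vir}_{-})v\cap V_{p}$; and if some monomial of $\overline{u}$ were not $L_{-p}v$ the least $L$-index $n$ occurring satisfies $n<p$, so (a) with $I_{n}u=0$ gives $nc_{LI}(n-p)\tfrac{\partial\overline{u}}{\partial L_{-n}}=0$, impossible. Thus $\overline{u}=L_{-p}v$ up to scalar.

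Suppose instead $\tfrac{h_{I}}{c_{LI}}-1=p\in\mathbb{N}$; now the scalar in (a) is $nc_{LI}(n+p)\neq0$ for all $n\geq1$, while the scalar in (b) is $mc_{LI}(p-m)$, zero only at $m=p$. Writing $\overline{u}=A+B$ as before, (a) with $I_{n}u=0$ and the never-vanishing scalar immediately gives $A=0$, so $\overline{u}\in\mathcal{I}\setminus\mathbb{C}v$; applying (b) with the greatest $I$-index $m$ forces $m=p$, and a degree count then gives $\overline{u}=I_{-p}v$ up to scalar (with $\deg_{I}\overline{u}=1$). It remains to show $\deg_{L}u=0$, i.e. $u\in V_{p}\cap\mathcal{I}$. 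Decompose $u=\sum_{j\geq0}\overline{u_{j}}$ by $I$-degree; if some $\overline{u_{j}}$ has an $L$-factor, let $j_{0}$ be the least such index, so $\overline{u_{j}}\in\mathcal{I}$ for $j<j_{0}$, and let $n$ be the least $L$-index in $\overline{u_{j_{0}}}$. Applying $I_{n}$ to the tail $u-\sum_{j<j_{0}}\overline{u_{j}}$, part (a) makes its relevant graded piece $nc_{LI}(n+p)\tfrac{\partial\overline{u_{j_{0}}}}{\partial L_{-n}}\neq0$, whereas $I_{n}(u-\sum_{j<j_{0}}\overline{u_{j}})=I_{n}u-\sum_{j<j_{0}}I_{n}\overline{u_{j}}=0$, since $u$ is singular and $I_{n}$ annihilates every element of $\mathcal{I}$ (this is where $c_{I}=0$ is indispensable). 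This contradiction yields $\deg_{L}u=0$, completing the argument.

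The main obstacle I anticipate is the first paragraph: proving the two halves of the $\mathcal{H}$-analogue of Lemma \ref{key} and, in particular, checking that the vanishing of $c_{I}$ makes the $I$-mode commutators rigid enough that no $C_{LI}$-correction survives in the graded component of interest — a point where the $W(2,2)$ version already needs care with central terms and where the Heisenberg-Virasoro version, though cleaner, still must be verified by hand. After that, the remaining steps are just the weight-count-plus-singularity bookkeeping already used for $W(2,2)$.
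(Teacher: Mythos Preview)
The paper does not supply its own proof of this statement; it is quoted from \cite{Billig}. Your proposal is correct and is exactly the expected adaptation of the paper's $W(2,2)$ machinery (Lemma~\ref{key}, Lemma~\ref{w}, Theorem~\ref{u'}) to the twisted Heisenberg--Virasoro setting at level zero---indeed this is essentially Billig's argument. The two scalar computations you isolate, $[I_n,L_{-n}]\mapsto n(h_I+(n-1)c_{LI})$ and $[L_m,I_{-m}]\mapsto m(h_I-(m+1)c_{LI})$, are precisely what replace the single scalar $n(2h_W+\tfrac{n^2-1}{12}c)$ from Lemma~\ref{key}, and their distinct zero loci ($n=p$ in the first case, $m=p$ in the second) are what separate the two halves of the statement. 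Your verification that $c_I=0$ makes $I_n$ annihilate $\mathcal{I}$ and prevents any $I$-degree drop when applying $I_n$ (respectively, more than a single drop when applying $L_m$) is the only genuinely new check beyond the $W(2,2)$ template, and you have it right; the rest is, as you say, the same weight-and-singularity bookkeeping already carried out for Theorem~\ref{u'}.
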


\begin{remark}
If $\frac{h_{I}}{c_{LI}}-1=p\in\mathbb{N}$ one can show that
\begin{equation}
u=w_{0}v+\sum_{i=1}^{p-1}w_{i}L_{-i}v+L_{-p}v, \label{HV s}%
\end{equation}
where $w_{i}\in U\left(  \mathcal{H}_{-}\right)  _{p-i}\cap\mathcal{I}$. (Set
$r=1$ in Lemma \ref{L-rstupanj}.)

For example $(L_{-1}+\frac{h}{c_{LI}}I_{-1})v$ is a singular vector in
$V(c_{L},0,c_{LI},h,0)$, and $I_{-1}v$ in $V(c_{L},0,c_{LI},h,2c_{LI})$.
\end{remark}

\bigskip

Next we define an intermediate series. We take an intermediate series
$\operatorname*{Vir}$-module $V_{\alpha,\beta}$ and let $I_{n}$ act by scalar.
Let $\alpha,\beta,F\in\mathbb{C}$. $V_{\alpha,\beta,F}$ is $\mathcal{H}%
$-module with basis $\{v_{m}:m\in\mathbb{Z}\}$ and action
\begin{align*}
L_{n}v_{m}  &  =-(m+\alpha+\beta+n\beta)v_{m+n},\\
I_{n}v_{m}  &  =Fv_{m+n},\\
C_{L}v_{m}  &  =C_{I}v_{m}=C_{LI}v_{m}=0.
\end{align*}
As with other intermediate series, $V_{\alpha,\beta,F}\cong V_{\alpha
+k,\beta,F}$ for $k\in\mathbb{Z}$. Also, $V_{\alpha,\beta,F}$ is reducible if
and only if $\alpha\in\mathbb{Z}$, $\beta\in\{0,1\}$ and $F=0$. Define
$V_{0,0,0}^{\prime}:=V_{0,0,0}/\mathbb{C}v_{0}$, $V_{0,1,0}^{\prime}%
:=\oplus_{k\neq-1}\mathbb{C}v_{k}$ and $V_{\alpha,\beta,F}^{\prime}%
:=V_{\alpha,\beta,F}$ otherwise. Then $V_{\alpha,\beta,F}^{\prime}$ are
irreducible modules.

It has been shown by R.\ Lu and K.\ Zhao in \cite{Lu-Zhao} that irreducible
$\mathcal{H}$-module with finite-di\-men\-si\-o\-nal weight spaces is either
the highest (or lowest) weight module, or isomorphic to some $V_{\alpha
,\beta,F}^{\prime}$.

\bigskip

Throughout this section we write short $(\overline{c},h,h_{I})$ for
$(c_{L},0,c_{LI},h,h_{I})$. For example, $L(\overline{c},h,h_{I})$ wil denote
the highest weight module $L(c_{L},0,c_{LI},h,h_{I})$.

Now consider module $V_{\alpha,\beta,F}^{\prime}\otimes L(\overline{c}%
,h,h_{I})$. It is generated by $\{v_{m}\otimes v:m\in\mathbb{Z}\}$ where $v$
is the highest weight vector, and has infinite-dimensional weight subspaces.

\begin{theorem}
\label{HV m}Let $\alpha,\beta,F\in\mathbb{C}$ arbitrary. $V_{\alpha,\beta
,F}^{\prime}\otimes L(\overline{c},h,h_{I})$ is irreducible if and only if it
is cyclic on every $v_{m}\otimes v$, $m\in\mathbb{Z}$.
\end{theorem}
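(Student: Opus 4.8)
The plan is to mirror the argument of Theorem \ref{m}, since the structure of the problem is identical to the $W(2,2)$ case with $W_n$ replaced by $I_n$ and the role of semisimple $W_0$ now played by the genuinely semisimple $I_0$. The ``only if'' direction is trivial: if the module is irreducible and $0\neq v_m\otimes v$, then $U(\mathcal{H})(v_m\otimes v)$ is a nonzero submodule, hence everything. For the ``if'' direction, assume $V_{\alpha,\beta,F}^{\prime}\otimes L(\overline{c},h,h_I)$ is cyclic on every $v_m\otimes v$. Let $U$ be a nonzero submodule and pick a nonzero homogeneous $x\in U$. Write
\[
x=v_{m-n}\otimes x_0+\cdots+v_m\otimes x_n,
\]
with $x_j\in V(\overline{c},h,h_I)$ of $L_0$-degree $j$ (so $x_n\neq 0$), and induct on $n$. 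For $n=0$ we have $x_0\in\mathbb{C}v$, so $v_m\otimes v\in U$ and cyclicity finishes it.

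For the inductive step $n>0$ the strategy is again to ``shorten'' $x$ by acting with $U(\mathcal{H}_+)$. The first subtlety compared to $W(2,2)$ is that $I_k$ no longer annihilates the intermediate-series part: $I_k v_i=Fv_i$ for $k>0$. One handles this by noting that for $a\in\mathcal{H}_+$ acting on $v_i\otimes y$ the ``bad'' term $a v_i\otimes y$ only shifts the index $i$ by $\deg a$ and multiplies by a scalar, so applying any element of $\mathcal{H}_+$ to $x$ either produces a strictly shorter vector (to which induction applies, yielding some $v_k\otimes v\in U$) or acts purely on the second tensor factor of the top term $v_m\otimes x_n$. Concretely: if $L_k x_n\neq 0$ or $I_k x_n\neq 0$ for some $k\in\mathbb{N}$, then after subtracting the scalar contributions from $a v_i$ one still gets a nonzero vector of length $\le n-1$ in $U$; apply induction. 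So we may assume $L_k x_n=I_k x_n=0$ for all $k\in\mathbb{N}$, i.e.\ $x_n$ is a singular vector of $L(\overline{c},h,h_I)$; since $x_n$ has positive $L_0$-degree and $L(\overline{c},h,h_I)$ is irreducible, this is impossible unless $x_n\in\mathbb{C}v$, contradicting $\deg_{L_0}x_n=n>0$. Wait — one must be careful that the shifted-index scalars are actually nonzero or can be arranged so; this is exactly the point handled in \cite{Radobolja2}, and the referenced proof of Theorem~3.2 there supplies the combinatorial bookkeeping (choosing $k=1$ or $k=2$ to ensure a nonzero coefficient, since $L_1,L_2,I_1,I_2$ generate $\mathcal{H}_+$ in the relevant sense). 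One then invokes that argument verbatim.

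The main obstacle I expect is precisely this last bookkeeping step: ensuring that when we apply $L_1$ or $L_2$ (or $I_1,I_2$) to $x$ the resulting vector is genuinely nonzero and genuinely shorter, so the induction is not vacuous. In the $W(2,2)$ proof this is finessed by the observation that $x_n$ cannot be simultaneously killed by $W_1,W_2,L_1,L_2$; here the analogue is that $x_n$ cannot be killed by $I_1,I_2,L_1,L_2$, which again generate $\mathcal{H}_+$, so $x_n$ killed by all four forces $x_n$ singular, hence in $\mathbb{C}v$. The scalar $F$ entering $I_k v_i=F v_i$ does not obstruct anything: even if $F=0$ we recover exactly the $W(2,2)$ computation, and if $F\neq0$ the extra scalar terms are absorbed into the induction just like the $L_k$ terms. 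Thus the proof is a routine transcription of Theorem \ref{m} (equivalently of \cite[Theorem~3.2]{Radobolja2}) with $W\rightsquigarrow I$, and I would simply write: ``The proof is entirely analogous to that of Theorem \ref{m}; one replaces $W_n$ by $I_n$ throughout and notes that the relations $I_k v_m=Fv_m$ contribute only index-shifted scalar terms that are absorbed into the inductive step exactly as the $L_k$-terms are. We omit the details.''
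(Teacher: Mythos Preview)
Your proposal has a genuine gap in the $F\neq 0$ case. When you apply $I_1$ to $x$, the result is
\[
I_1 x = F\,v_{m+1}\otimes x_n + \sum_{j=0}^{n}v_{m-n+j}\otimes(\cdots),
\]
and the top component $F\,v_{m+1}\otimes x_n$ still carries $x_n$ of $L_0$-degree $n$: the vector is \emph{not} shorter. Your phrase ``after subtracting the scalar contributions from $a v_i$'' does not define an operation that stays inside the submodule $U$; the shifted vector $\sum_i F\,v_{m-n+i+1}\otimes x_i$ you would like to subtract is not known to lie in $U$. The analogy with the $L_k$-terms breaks down because the coefficient in $L_k v_m=-(m+\alpha+\beta+k\beta)v_{m+k}$ depends on $m$ and $k$ (and this dependence is precisely what the argument of \cite{Radobolja2} exploits), whereas $I_k v_m=F\,v_{m+k}$ has a constant coefficient, so the same bookkeeping does not go through.

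The paper supplies exactly the missing ingredient: when $L_1x_n=L_2x_n=0$ (so necessarily $I_1x_n\neq 0$) and $F\neq 0$, one applies the operator $I_1^{2}-F\,I_2\in U(\mathcal{H})$. A direct check gives
\[
(I_1^{2}-F\,I_2)(v_j\otimes y)=2F\,v_{j+1}\otimes I_1 y+v_j\otimes(I_1^{2}-F\,I_2)y,
\]
so the problematic $v_{j+2}\otimes y$ term cancels. Applied to $x$, the top component of $(I_1^{2}-F\,I_2)x$ is $2F\,v_{m+1}\otimes I_1x_n\neq 0$ with $I_1x_n$ of degree $n-1$, and the induction proceeds. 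This specific quadratic combination is the one non-routine step; once you include it, your outline is correct and coincides with the paper's proof.
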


\begin{proof}
The proof is similar to $W(2,2)$ case (Theorem \ref{m}). We take nontrivial
submodule $U$ and
\[
x=v_{m-n}\otimes x_{0}+\cdots v_{m}\otimes x_{n}\in U
\]
such that $x_{i}\in L(\overline{c},h,h_{I})_{i}$, and show by induction on $n$
that $v_{k}\otimes v\in U$. If $L_{1}x_{n}\neq0$ or $L_{2}x_{n}\neq0$ we
follow the proof of Theorem 3.2 in \cite{Radobolja2}. Otherwise it must be
$I_{1}x_{n}\neq0$.

If $F=0$, then
\[
I_{1}x=v_{m-n+1}\otimes I_{1}x_{1}+\cdots+v_{m}\otimes I_{1}x_{n}\neq0
\]
and the proof is done. If $F\neq0$ we have
\begin{align*}
(I_{1}^{2}-FI_{2})x  &  =2F\sum_{i=1}^{n}v_{m-n+i+1}\otimes I_{1}x_{i}%
+\sum_{i=2}^{n}v_{m-n+i}\otimes(I_{1}^{2}-FI_{2})x_{i}=\\
&  =Fv_{m+1}\otimes I_{1}x_{n}+\sum_{i=0}^{n-2}v_{m-n+i+2}\otimes
(2FI_{1}x_{i+1}+(I_{1}^{2}-FI_{2})x_{i+2})
\end{align*}
so again, there is a vector in $U$ with less than $n+1$ components, and by
induction, there is some $v_{k}\otimes v\in U$.
\end{proof}

We denotation by $U_{k}$ submodule $U(\mathcal{H})(v_{k}\otimes v)$ for any
$k\in\mathbb{Z}$. In order to prove irreducibility of $V_{\alpha,\beta
,F}^{\prime}\otimes L(\overline{c},h,h_{I})$, it suffices to show
$U_{n}=U_{n+1}$ for all $n$.

\begin{theorem}
Module $V_{\alpha,\beta,F}^{\prime}\otimes V(\overline{c},h,h_{I})$ is
reducible. Modules $V(\overline{c},h-\alpha-\beta-n,h_{I}+F)$ occur as
subquotients, for any $n\in\mathbb{Z}$, with exception of $n=-\alpha$ if
$\alpha\in\mathbb{Z}$ and $\beta=0$, and $n=-\alpha-1$ if $\alpha\in
\mathbb{Z}$ and $\beta=1$.
\end{theorem}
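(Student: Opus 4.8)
The plan is to transplant the proofs of Theorems \ref{ver} and \ref{q1} to $\mathcal{H}$, the only structural change being that $I_{-m}$, unlike $W_{-m}$ in the $W(2,2)$ setting, does not kill $v_j$ (one has $I_{-m}v_j = Fv_{j-m}$), so it too can shift the first tensor slot; since such shifts only \emph{decrease} the $v$-index, this does no harm once everything is phrased in terms of the \emph{top} index. Write $U_n := U(\mathcal{H})(v_n\otimes v)$. The basic computation is that for $k>0$, $L_k(v_m\otimes v) = -(m+\alpha+\beta+k\beta)v_{m+k}\otimes v$ and $I_k(v_m\otimes v) = Fv_{m+k}\otimes v$, while $\mathcal{H}_0$ acts on $v_m\otimes v$ by the scalars $h-m-\alpha-\beta$ (for $L_0$), $h_I+F$ (for $I_0$), and $c_L,0,c_{LI}$ (for $C_L,C_I,C_{LI}$). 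Hence $U(\mathcal{H}_+)(v_n\otimes v)\subseteq\mathbb{C}(v_n\otimes v)+\operatorname{span}\{v_j\otimes v:j>n\}$, and applying suitable elements of $\mathcal{H}_+$ one checks that every $v_j\otimes v$ actually present in $V_{\alpha,\beta,F}^{\prime}$ with $j>n$ lies in $U_{n+1}$; this is where the two excluded indices enter, since for $\alpha\in\mathbb{Z}$ normalized to $\alpha=0$, $\beta\in\{0,1\}$, $F=0$ the module $V_{\alpha,\beta,F}^{\prime}$ simply lacks $v_{-\alpha}$ resp.\ $v_{-\alpha-1}$. Thus by PBW $U_n = U(\mathcal{H}_-)(v_n\otimes v)+U_{n+1}$ and $U_{n+1}=\operatorname{span}\{x(v_{n+k}\otimes v):x\in U(\mathcal{H}_-),\,k>0\}$.

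Everything then follows from one observation: for $x\in U(\mathcal{H}_-)$ the Leibniz rule gives $x(v_j\otimes v)=v_j\otimes xv+(\text{a sum of }v_i\otimes y_i\text{ with }i<j)$, so the part of highest $v$-index in $x(v_j\otimes v)$ is exactly $v_j\otimes xv$, which is nonzero whenever $x\neq0$ because $V(\overline{c},h,h_I)$ is free over $U(\mathcal{H}_-)$. For reducibility: were $U_n$ the whole module we would have $v_{n-1}\otimes v=\sum_{j\ge n}a_j(v_j\otimes v)$ with $a_j\in U(\mathcal{H}_-)$; comparing the part of highest index on both sides forces $a_{j_0}v=0$ for $j_0=\max\{j:a_j\neq0\}$, a contradiction, so every $U_n$ is a proper nonzero submodule and $V_{\alpha,\beta,F}^{\prime}\otimes V(\overline{c},h,h_I)$ is reducible.

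For the subquotient: $v_n\otimes v+U_{n+1}$ is annihilated by $\mathcal{H}_+$ and is a weight vector of weight $(\overline{c},h-\alpha-\beta-n,h_I+F)$ by the scalars above, so $U_n/U_{n+1}$ is a highest weight module with that highest weight, generated by this vector. The same index comparison shows $U(\mathcal{H}_-)\to U_n/U_{n+1}$ is injective: if $y(v_n\otimes v)\in U_{n+1}$ for some $0\neq y\in U(\mathcal{H}_-)$, write the right-hand side as $\sum_{k>0}x_k(v_{n+k}\otimes v)$ and compare top-index parts to get $x_{k_0}v=0$ with $k_0=\max\{k:x_k\neq0\}$. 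Since $\mathcal{H}_+$ kills and $\mathcal{H}_0$ scales the generator, $U_n/U_{n+1}=U(\mathcal{H}_-)(v_n\otimes v+U_{n+1})$ is free over $U(\mathcal{H}_-)$ on its highest weight vector, hence isomorphic to the Verma module $V(\overline{c},h-\alpha-\beta-n,h_I+F)$.

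The one remaining nuisance is the degenerate index $n$ with $n+\alpha+2\beta=0$, where $L_1(v_n\otimes v)=0$; outside the excluded cases one has either $F\neq0$, whence $v_{n+1}\otimes v=F^{-1}I_1(v_n\otimes v)\in U_n$ and everything above applies verbatim, or $F=0$ and $\beta\neq0$, in which case $I_1(v_n\otimes v)=I_2(v_n\otimes v)=0$ and $L_2(v_n\otimes v)$ is a nonzero multiple of $v_{n+2}\otimes v$, and one runs the identical argument with $U_n/U_{n+2}$ in place of $U_n/U_{n+1}$, still recovering $V(\overline{c},h-\alpha-\beta-n,h_I+F)$. I do not anticipate a real obstacle: the entire proof is the single ``top-index'' comparison, and the only point needing care is that $I_{-m}$ now moves the $v$-index, which is exactly why one must track the top rather than the bottom index.
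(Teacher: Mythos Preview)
Your proof is correct and follows exactly the approach the paper intends: the paper's own proof is the single sentence ``Analogous to Theorems \ref{ver} and \ref{q1},'' and you have faithfully transplanted those arguments, correctly isolating the one structural change (that $I_{-m}v_j=Fv_{j-m}$ rather than $0$) and explaining why it is harmless for the top-index comparison. Your treatment of the degenerate index $n+\alpha+2\beta=0$ via $I_1$ when $F\neq0$ or via $U_n/U_{n+2}$ when $F=0$ matches the paper's parenthetical handling in Theorem~\ref{q1}.
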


\begin{proof}
Analogous to Theorems \ref{ver} and \ref{q1}.
\end{proof}

\bigskip

First we consider case $F=0$.

\begin{theorem}
Let $1-\frac{h_{I}}{c_{LI}}=p\in\mathbb{N}$. Then $V_{\alpha,\beta,0}^{\prime
}\otimes L(\overline{c},h,h_{I})$ is reducible if and only if $\alpha
+(1-p)\beta\in\mathbb{Z}$. In that case $(V_{\alpha,\beta,0}^{\prime}\otimes
L(\overline{c},h,h_{I}))/U_{1-p}$ is the highest weight module with the
highest weight $(\overline{c},h+p(1-\beta),h_{I})$.
\end{theorem}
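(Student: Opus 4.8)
The plan is to run the $W(2,2)$ arguments of Theorems~\ref{tenz irr} and~\ref{malo red} essentially verbatim, taking $r=1$ throughout (there are no genuine subsingular vectors here, only the singular vector $u\in V_p$ of Theorem~\ref{HV sing}), and replacing the subspace $\mathcal W$ by $\mathcal I$: since $F=0$ we have $I_nv_m=0$, hence $w(v_k\otimes x)=v_k\otimes wx$ for every $w\in\mathcal I$, and this is the only structural feature of $W(2,2)$ used in those proofs. Write $u=L_{-p}v+\sum_{i=1}^{p-1}w_iL_{-i}v+w_0v$ with $w_i\in\mathcal I$ as in~(\ref{HV s}), so $uv=0$ in $L(\overline c,h,h_I)$. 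By Theorem~\ref{HV m} it is enough to decide when $U_n=U_{n+1}$ for all $n\in\mathbb Z$, where $U_n=U(\mathcal H)(v_n\otimes v)$. From $L_1(v_n\otimes v)=-(n+\alpha+2\beta)v_{n+1}\otimes v$ we have $U_n\supseteq U_{n+1}$ for every $n\neq-\alpha-2\beta$.

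The core computation is the $r=1$ case of the one in Theorem~\ref{tenz irr}, and here it is direct: since $u\in U(\mathcal H_-)$ and $w(v_k\otimes x)=v_k\otimes wx$ for $w\in\mathcal I$, expanding through the coproduct gives
\[
u(v_{n+p-1}\otimes v)=-\bigl(n+p-1+\alpha+(1-p)\beta\bigr)\,v_{n-1}\otimes v+\sum_{i=1}^{p-1}\mu_i\,w_i(v_{n+p-1-i}\otimes v),
\]
the part with first index $n+p-1$ collapsing to $v_{n+p-1}\otimes uv=0$. Since $u(v_{n+p-1}\otimes v)\in U_{n+p-1}$ and $w_i(v_{n+p-1-i}\otimes v)\in U_{n+p-1-i}$, this shows that $n+p-1+\alpha+(1-p)\beta\neq0$ forces $U_{n-1}\subseteq U_n+\dots+U_{n+p-1}$.

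If $\alpha+(1-p)\beta\notin\mathbb Z$ the coefficient above never vanishes, so together with the (almost) decreasing chain $U_n\supseteq U_{n+1}$ one gets $U_{n-1}=U_n$ for every $n$ --- the lone index $n=-\alpha-2\beta$ where the chain could fail being healed, using $L_1$ and $L_2$, by the manoeuvre $U_{k+2}\subseteq U_k\subseteq U_{k+1}=U_{k+2}$ from the proof of Theorem~\ref{tenz irr} --- so the module is irreducible. If instead $\alpha+(1-p)\beta\in\mathbb Z$, replace $\alpha$ by an integer translate so that $\alpha+(1-p)\beta=0$; then the coefficient is $n+p-1$, vanishing exactly at $n=1-p$. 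Hence $U_{n-1}=U_n$ for every $n\neq1-p$, so $U_k=U_{1-p}$ for $k\ge1-p$, $U_k=U_{-p}$ for $k\le-p$, and $V_{\alpha,\beta,0}'\otimes L(\overline c,h,h_I)=U_{-p}$ (the generators $v_k\otimes v$ all lie in $U_{-p}$). It remains to establish the strict inclusion $U_{1-p}\subsetneq U_{-p}$, i.e.\ $v_{-p}\otimes v\notin U_{1-p}$; I expect this to be the main obstacle. The argument follows Theorem~\ref{malo red}: from a shortest expression $v_{-p}\otimes v=\sum_{i\ge0}y_{i+1}(v_{1-p+i}\otimes v)$ the longest homogeneous component forces $y_mv=0$, hence $y_m\in U(\mathcal H_-)u$, and using $uv=0$ one rewrites the top term through strictly smaller indices and shortens the sum; iterating lands one in the case $y_p=u$, where --- since $\alpha+(1-p)\beta=0$ makes $L_{-p}v_0=0$, so the first factor in $L_{-p}(v_0\otimes v)$ is not lowered --- the vector $u(v_0\otimes v)$ involves only indices $-1,\dots,-(p-1)$ and cannot produce $v_{-p}\otimes v$, a contradiction.

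Finally, the quotient $W:=(V_{\alpha,\beta,0}'\otimes L(\overline c,h,h_I))/U_{1-p}=U_{-p}/U_{1-p}$ is generated by the image of $v_{-p}\otimes v$, and that image is a highest weight vector: for $n>0$ one has $I_n(v_{-p}\otimes v)=v_{-p}\otimes I_nv=0$ and $L_n(v_{-p}\otimes v)=-(-p+\alpha+(1+n)\beta)v_{n-p}\otimes v\in U_{n-p}=U_{1-p}$, since $n-p\ge1-p$. Its weight is read off from $L_0(v_{-p}\otimes v)=(h+p-\alpha-\beta)(v_{-p}\otimes v)$ and $I_0(v_{-p}\otimes v)=h_I(v_{-p}\otimes v)$, and substituting $\alpha=(p-1)\beta$ gives $h+p-\alpha-\beta=h+p(1-\beta)$. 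Thus $W$ is a highest weight module with highest weight $(\overline c,h+p(1-\beta),h_I)$, as claimed; as in Remark~\ref{kvocijenti} one could further identify which quotient of the corresponding Verma module it is, but that is not needed here.
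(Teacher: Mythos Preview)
Your proposal is correct and follows precisely the approach the paper intends: the paper's own proof is just the two sentences ``analogous to Theorem~\ref{tenz irr} for $r=1$'' and ``analogous to Theorem~\ref{q3} (i.e.\ Theorem~\ref{malo red}) for $r=1$'', and you have carried out exactly that translation, replacing $\mathcal W$ by $\mathcal I$ and using $I_n v_m=0$ (since $F=0$) in place of $W_n v_m=0$. The core identity you display for $u(v_{n+p-1}\otimes v)$ is the $r=1$ specialization of~(\ref{l p r}), and your reducibility argument is the $j=r=1$ case of Theorem~\ref{malo red}; the identification of the quotient as a highest weight module of weight $(\overline c,h+p(1-\beta),h_I)$ matches Remark~\ref{kvocijenti}.
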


\begin{proof}
The proof of irreducibility is analogous to that of Theorem \ref{tenz irr} for
$r=1$, since we have a singular vector (\ref{HV s}). Proof of reducibility is
analogous to Theorem \ref{q3} for $r=1$.
\end{proof}

\begin{theorem}
Let $\frac{h_{I}}{c_{LI}}-1\in\mathbb{N}$. Then $V_{\alpha,\beta,0}^{\prime
}\otimes L(\overline{c},h,h_{I})$ is reducible. Subquotients $U_{n}/U_{n+1}$
are isomorphic to $L(\overline{c},h-\alpha-\beta-n,h_{I})$.
\end{theorem}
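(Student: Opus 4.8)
The plan is to handle this as the $\mathcal{H}$-analogue of Theorems \ref{skroz red} and \ref{q2}, because the hypothesis $\frac{h_I}{c_{LI}}-1=p\in\mathbb{N}$ places us, by Theorem \ref{HV sing}, in the case where the singular vector $u\in V(\overline{c},h,h_I)$ lies in $\mathcal{I}$: writing $u=\widetilde u\,v$, the element $\widetilde u$ is a polynomial in the $I_{-m}$'s only, exactly parallel to a singular vector $u'\in\mathcal{W}$ in the $W(2,2)$ setting. The first thing I would record is that, since $F=0$, each $I_{-m}$ acts only on the second tensor factor, $I_{-m}(v_j\otimes y)=v_j\otimes I_{-m}y$; hence $\widetilde u\,(v_j\otimes v)=v_j\otimes\widetilde u\,v=v_j\otimes u=0$ in $V_{\alpha,\beta,0}^{\prime}\otimes L(\overline{c},h,h_I)$, the last equality because $u$ maps to $0$ in $L(\overline{c},h,h_I)$.

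For reducibility I would mirror the proof of Theorem \ref{skroz red}. Assuming $x(v_n\otimes v)=v_{n-1}\otimes v$ for some $x\in U(\mathcal{H})$, expanding gives $v_{n-1}\otimes v=\sum_{i=0}^k x_{i+1}(v_{n+i}\otimes v)$ with $x_j\in U(\mathcal{H}_-)_{-j}$; comparing the components of largest $v$-index forces $x_{k+1}v=0$ in $L(\overline{c},h,h_I)$, so $x_{k+1}\in U(\mathcal{H}_-)\widetilde u$ and then $x_{k+1}(v_{n+k}\otimes v)=0$ by the identity above. This summand may be dropped and the argument iterated to a contradiction, giving $U_{n-1}\neq U_n$ for all $n$; since $U_{n+1}\subseteq U_n$ whenever $L_1(v_n\otimes v)\neq0$ (with the usual workaround via $L_2$ at the exceptional index), the module is reducible.

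For the subquotients I would follow Remark \ref{kvocijenti} and Theorem \ref{q2}. The point to pin down is that the formula for $\widetilde u$, and the fact that $\widetilde u\,v'$ is a singular vector, are independent of $h$: in computing $L_m\widetilde u\,v$ for $m>0$ one meets only the brackets $[L_m,I_{-j}]=jI_{m-j}-\delta_{m,j}(m^2+m)C_{LI}$ and $[I_m,I_{-j}]=0$, so $L_0$ never occurs and only $h_I$ and $c_{LI}$ enter the coefficients; consequently $V(\overline{c},h',h_I)/U(\mathcal{H}_-)\widetilde u\,v'\cong L(\overline{c},h',h_I)$ for every $h'$ by Theorem \ref{HV ireducibilnost}(ii). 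Then, exactly as for $W(2,2)$, each nonzero $U_n/U_{n+1}$ (replacing it by $U_n/U_{n+2}$ at the exceptional index, and with the same exceptions as in Theorem \ref{q2}) is a highest weight module of highest weight $(\overline{c},h-n-\alpha-\beta,h_I)$ --- read off from $L_0(v_n\otimes v)=(h-n-\alpha-\beta)(v_n\otimes v)$, $I_0(v_n\otimes v)=h_I(v_n\otimes v)$, and $L_m(v_n\otimes v),\,I_m(v_n\otimes v)\in U_{n+1}$ for $m>0$ --- in which the singular vector $\widetilde u\,(\overline{v_n\otimes v})=\overline{v_n\otimes u}=0$. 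Hence $U_n/U_{n+1}$ is a nonzero quotient of $L(\overline{c},h-n-\alpha-\beta,h_I)$, and therefore equals it.

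The step I expect to require the most care is bookkeeping rather than genuine difficulty: organizing the chain $\{U_n\}$ so that the exceptional indices (the degenerate values of $\alpha,\beta$ for which $V_{\alpha,\beta,0}$ is reducible, and the index where $L_1$ degenerates) do not spoil the statement that $U_n/U_{n+1}$ is a subquotient, precisely as in Theorems \ref{skroz red}--\ref{q2}; and making the $h$-independence of the singular-vector formula precise enough to legitimately transport the relation $\widetilde u\,v'=0$, and the identification $V(\overline{c},h',h_I)/U(\mathcal{H}_-)\widetilde u\,v'\cong L(\overline{c},h',h_I)$, to every weight $h'$. Both become routine once the $I$-only form of $u$ furnished by Theorem \ref{HV sing} is exploited.
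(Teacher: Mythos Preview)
Your proposal is correct and follows exactly the approach indicated by the paper, whose own proof is the single line ``Analogous to Theorem \ref{q2}.'' You have faithfully expanded this: the singular vector $u\in\mathcal{I}$ plays the role of $u'\in\mathcal{W}$, the identity $\widetilde u\,(v_j\otimes v)=0$ (from $F=0$) drives the reducibility argument of Theorem \ref{skroz red}, and the $h$-independence of $\widetilde u$ together with Theorem \ref{HV ireducibilnost}(ii) gives the exact identification $U_n/U_{n+1}\cong L(\overline{c},h-\alpha-\beta-n,h_I)$ (here there is no $L'$ vs.\ $L$ ambiguity since the Heisenberg--Virasoro Verma module has no subsingular vectors).
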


\begin{proof}
Analogous to Theorem \ref{q2}.
\end{proof}

\bigskip

Now we give a general result for arbitrary $F$, analogous to Theorem 5 in
\cite{Zhang}.

\begin{theorem}
Let $\left\vert 1-\frac{h_{I}}{c_{LI}}\right\vert =p\in\mathbb{N}$, and
$(h,h_{I})\neq(0,0)$. If $F$ is transcendental over $\mathbb{Q}(\alpha
,\beta,c_{L},c_{LI},h,h_{I})$ or $F$ is algebraic over $\mathbb{Q}%
(\alpha,\beta,c_{L},c_{LI},h,h_{I})$ with degree greater than $p$, then
$V_{\alpha,\beta,F}^{\prime}\otimes L(\overline{c},h,h_{I})$ is irreducible.
\end{theorem}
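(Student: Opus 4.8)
The plan is to follow closely the argument of Theorem~\ref{tenz irr} (and of Theorem~5 in \cite{Zhang}); the one new phenomenon is that the parameter $F$ enters a decisive scalar as a polynomial of bounded degree. Set $p=\bigl|1-\tfrac{h_I}{c_{LI}}\bigr|$ and, for $k\in\mathbb Z$, write $U_k:=U(\mathcal H)(v_k\otimes v)$. By Theorem~\ref{HV m} it suffices to prove $U_n=U_{n+1}$ for every $n\in\mathbb Z$. Since $F\neq0$ and $I_1v=0$ in $L(\overline c,h,h_I)$ we have $I_1(v_n\otimes v)=F\,v_{n+1}\otimes v$, so $U_n\supseteq U_{n+1}$ for all $n$ with no exceptional index. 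It remains to establish the reverse inclusions $U_{n-1}\subseteq U_n$.

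For this I use the singular vector of Theorems~\ref{HV ireducibilnost} and \ref{HV sing}. Let $\hat u\in U(\mathcal H_-)_p$ with $\hat uv$ equal to that singular vector, so $\hat uv=0$ in $L(\overline c,h,h_I)$; by Theorem~\ref{HV sing} and the form (\ref{HV s}), if $1-\tfrac{h_I}{c_{LI}}=p$ then $\hat u=L_{-p}+\sum_{i=1}^{p-1}w_iL_{-i}+w_0$ with each $w_i$ a non-zero homogeneous element of $U(\mathcal H_-)_{p-i}\cap\mathcal I$, while if $\tfrac{h_I}{c_{LI}}-1=p$ then $\hat u\in U(\mathcal H_-)_p\cap\mathcal I$ with lowest $I$-degree part $I_{-p}$. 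Fix $n$, apply $\hat u$ to $v_{n+p-1}\otimes v\in U_{n+p-1}\subseteq U_n$ and sort the result according to the index of the left tensor factor. Using $I_{-j}(v_m\otimes y)=F\,v_{m-j}\otimes y+v_m\otimes I_{-j}y$ and $L_{-j}(v_m\otimes y)=-(m+\alpha+\beta(1-j))v_{m-j}\otimes y+v_m\otimes L_{-j}y$, one sees that every term of $\hat u(v_{n+p-1}\otimes v)$ has left index in $\{n-1,\dots,n+p-2\}$, and that the component of left index $n-1$ is a scalar multiple of $v_{n-1}\otimes v$ (the right factor stays $v$ only when every operator of $\hat u$ has acted on the left, i.e.\ has shifted by the full degree $p$). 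Every remaining term $v_m\otimes\xi_m$ with $m\ge n$ may be written, for any $\eta\in U(\mathcal H_-)$ with $\eta v=\xi_m$, as $\eta(v_m\otimes v)$ minus terms of strictly smaller left index, where $\eta(v_m\otimes v)\in U_m\subseteq U_n$; iterating down to left index $n-1$ and collecting all contributions to $v_{n-1}\otimes v$ gives $Q_n(F)\,v_{n-1}\otimes v\in U_n$, with $Q_n(F)$ a polynomial in $F$ over $K:=\mathbb Q(\alpha,\beta,c_L,c_{LI},h,h_I)$.

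Two facts about $Q_n$ close the argument. (1) $\deg_FQ_n\le p$, because a power of $F$ is produced only when an $I$-factor of $\hat u$ acts on the left, and $\hat u$ has total degree $p$, hence at most $p$ such factors. (2) $Q_n\not\equiv0$ for every $n$ outside at most one value: since each $w_i$ and $w_0$ is a non-zero product of lowering operators $I_{-j}$, these pieces, and all the ``collision'' contributions obtained by pushing the higher-index terms down, carry only positive powers of $F$; therefore, in the case $1-\tfrac{h_I}{c_{LI}}=p$ the constant term of $Q_n$ is the scalar $-(n+p-1+\alpha+(1-p)\beta)$ --- the same coefficient that appears with $r=1$ in Theorem~\ref{tenz irr} --- which is non-zero for all integers $n$ with at most one exception, and in the case $\tfrac{h_I}{c_{LI}}-1=p$ the coefficient of $F^{1}$ in $Q_n$ is the (non-zero) coefficient of $I_{-p}$ in $\hat u$, so $Q_n\not\equiv0$ for all $n$; the hypothesis $(h,h_I)\neq(0,0)$ is exactly what excludes the degenerate singular vector $\hat u=L_{-1}$, for which $Q_n$ would collapse to the constant $-(n+\alpha)$. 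Granting (1) and (2): if $F$ is transcendental over $K$ then $Q_n(F)\neq0$ since $Q_n\not\equiv0$; if $F$ is algebraic over $K$ of degree $>p$, then $Q_n(F)\neq0$ because a non-zero polynomial of degree $\le p$ over $K$ cannot vanish at $F$. Hence $v_{n-1}\otimes v\in U_n$, so $U_{n-1}\subseteq U_n$ and thus $U_{n-1}=U_n$ for all $n$ apart from the single possible exception in the first case, which is handled precisely as the exceptional index in the last paragraph of the proof of Theorem~\ref{tenz irr} (the relation $Q_n(F)\,v_{n-1}\otimes v\in U_n$ is there replaced by $U_{n-1}\subseteq U_n+\cdots+U_{n+p-1}$, and this suffices once the neighbouring $U_m$ are known to coincide). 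By Theorem~\ref{HV m} the module is irreducible.

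The step I expect to be the real work is (2): keeping track, in the ``write as $\eta(v_m\otimes v)$ minus lower terms and iterate'' procedure, of exactly where the operators $I_{-j}$ act, so as to be certain that the collision contributions to the coefficient of $v_{n-1}\otimes v$ are genuinely of positive $F$-degree (so that the constant term, resp.\ the $F^{1}$-coefficient, of $Q_n$ is the stated scalar) and that no single contribution collects more than $p$ factors of $F$. This is elementary but must be done carefully; it is the only place where the proof goes beyond the $F=0$ case treated earlier.
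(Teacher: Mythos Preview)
Your strategy matches the paper's: $I_1$ gives $U_n\supseteq U_{n+1}$ for all $n$ (since $F\neq0$), and the singular-vector relation $\hat u v=0$ yields $Q_n(F)\,v_{n-1}\otimes v\in U_n$ with $\deg_FQ_n\le p$, so $U_{n-1}\subseteq U_n$ whenever $Q_n(F)\neq0$. In the case $\tfrac{h_I}{c_{LI}}-1=p$ your $F^1$-coefficient argument (from the $I_{-p}$ term of $\hat u$) is correct and gives $Q_n\not\equiv0$ for every $n$, just as the paper's factorisation $Fs(F)$ does.

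The gap is in the case $1-\tfrac{h_I}{c_{LI}}=p$. Your constant term $-(n+p-1+\alpha+(1-p)\beta)$ is right, but for the one $n_0$ where it vanishes you appeal to the exceptional-index argument at the end of Theorem~\ref{tenz irr}. That argument does not transfer: there the exception lies in the $\supseteq$ direction ($L_1$ fails at one index) and is patched using $L_2$ together with the $\subseteq$ direction, which holds \emph{without exception}. Here the roles are reversed---$I_1$ gives $\supseteq$ everywhere, and the possibly-missing inclusion is $U_{n_0-1}\subseteq U_{n_0}$---so there is no analogous two-step containment to fall back on. If $Q_{n_0}\equiv0$ as a polynomial in $F$, the relation $Q_{n_0}(F)\,v_{n_0-1}\otimes v\in U_{n_0}$ is vacuous, and your parenthetical replacement ``$U_{n-1}\subseteq U_n+\cdots+U_{n+p-1}$'' simply does not follow for $n=n_0$.

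The paper sidesteps this by tracking the \emph{top} coefficient of $Q_n$ rather than the bottom one: writing the scalar as $q(F)n+r(F)$ with $\deg q\le p-1$, the $F^p$-coefficient of $Q_n$ comes only from the $I_{-1}^p$ monomial of $w_0$ and is independent of $n$; once it is nonzero, $Q_n\not\equiv0$ for \emph{every} $n$ and no exceptional index arises. This is exactly where the hypothesis $(h,h_I)\neq(0,0)$ enters when $p=1$ (it forces the $I_{-1}$-coefficient $h/c_{LI}\neq0$). So the fix is to identify the leading $F$-coefficient of $Q_n$, not the constant term. (A minor side remark: you assert each $w_i$ is nonzero; the paper does not claim this and your argument does not need it---what matters is only that every monomial in $w_i$ carries at least one $I_{-j}$.)
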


\begin{proof}
Since $F\neq0$ we have
\[
I_{1}(v_{n-1}\otimes v)=Fv_{n}\otimes v\neq0
\]
so $U_{n}\subseteq U_{n-1}$.

Let $p=\frac{h_{I}}{c_{LI}}-1$. Then $u\in\mathcal{I}$. Since
\[
I_{-j_{t}}\cdots I_{-j_{1}}(v_{n+p-1}\otimes v)=F^{t}v_{n-1}\otimes
v+\cdots+v_{n+p-1}\otimes I_{-j_{t}}\cdots I_{-j_{1}}v
\]
for $j_{1}+\cdots+j_{t}=p$, we find $u^{\prime}\in U(\mathcal{H})$ such that
\[
u^{\prime}(v_{n}\otimes v)=Fs(F)v_{n-1}\otimes v
\]
for some $s(F)\in\mathbb{Q}(h_{I},c_{LI})\left[  F\right]  $, $\deg s=p-1$. By
assumption, $Fs(F)\neq0$ so $U_{n-1}\subseteq U_{n}$.

Now let $p=1-\frac{h_{I}}{c_{LI}}$. Then we apply (\ref{HV s}). Since
\[
I_{-j_{t}}\cdots I_{-j_{1}}L_{-i}(v_{n+p}\otimes v)=-F^{t}(n+p+\alpha
+\beta-i\beta)v_{n}\otimes v+
\]%
\begin{align*}
&  +\sum_{k}F^{k}(n+p+\alpha+\beta-i\beta)\sum_{j}v_{n+p-j}\otimes
x_{j}^{(t-k)}v+\\
&  +\sum_{l}F^{l}\sum_{j}v_{n+p-j-i}\otimes y_{j}^{(t-l)}L_{-i}v+v_{n+p}%
\otimes I_{-j_{t}}\cdots I_{-j_{1}}L_{-i}v
\end{align*}
where $j_{1}+\cdots+j_{t}=p-i$ and $\deg_{I}x_{j}^{(r)}=\deg_{I}y_{j}^{(r)}=r
$, we get
\[
u(v_{n+p}\otimes v)=f(F)v_{n}\otimes v+\sum_{i=1}^{p-1}v_{n+i}\otimes z_{i}v
\]
for some $z_{i}\in U(\mathcal{H}_{-})_{-i}$ and polynomial $f(F)$. Then we
find $u^{\prime}\in U(\mathcal{H})$ such that
\[
u^{\prime}(v_{n+1}\otimes v)=\left(  \strut q(F)n+r(F)\right)  v_{n}\otimes
v,
\]
where $q(F),r(F)\in\mathbb{Q}(\alpha,\beta,h,h_{I},c_{L},c_{LI})\left[
F\right]  $, $\deg q=p-1$, $\deg r=p$. Again, this shows $U_{n-1}\subseteq
U_{n}$.
\end{proof}

\begin{theorem}
$V_{\alpha,\beta,F}^{\prime}\otimes L(\overline{c},0,0)$ is irreducible if and
only if $\alpha\notin\mathbb{Z}$. Moreover,
\begin{align*}
(V_{0,\beta,F}^{\prime}\otimes L(\overline{c},0,0))/U_{0}  &  \cong
V(\overline{c},1-\beta,F)\text{, if }\beta\neq1\text{,}\\
(V_{0,1,F}^{\prime}\otimes L(\overline{c},0,0))/U_{0}  &  \cong V(\overline
{c},1,F).
\end{align*}

\end{theorem}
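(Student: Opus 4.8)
The plan is to reduce the whole statement, via Theorem~\ref{HV m}, to controlling the chain of submodules $U_{n}=U(\mathcal{H})(v_{n}\otimes v)$: the module $V_{\alpha,\beta,F}^{\prime}\otimes L(\overline{c},0,0)$ is irreducible exactly when $U_{n}=U_{n+1}$ for every $n\in\mathbb{Z}$. The one relation available in $L(\overline{c},0,0)$ is the singular vector relation $L_{-1}v=0$ --- this is the case $\left\vert 1-\tfrac{h_{I}}{c_{LI}}\right\vert =p=1$, $h=0$ of Theorems~\ref{HV ireducibilnost} and \ref{HV sing}, so that $L(\overline{c},0,0)=V(\overline{c},0,0)/U(\mathcal{H}_{-})L_{-1}v$ --- and this is the engine of every computation below.

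First the irreducibility half. From $L_{-1}v=0$ one reads off $L_{-1}(v_{n}\otimes v)=-(n+\alpha)v_{n-1}\otimes v$, $L_{1}(v_{n-1}\otimes v)=-(n-1+\alpha+2\beta)v_{n}\otimes v$, $L_{2}(v_{n-1}\otimes v)=-(n-1+\alpha+3\beta)v_{n+1}\otimes v$ and $I_{1}(v_{n-1}\otimes v)=Fv_{n}\otimes v$. If $\alpha\notin\mathbb{Z}$ then $n+\alpha\neq0$ for all $n$, so $L_{-1}$ already gives $U_{n-1}\subseteq U_{n}$ for every $n$; $L_{1}$ gives the reverse inclusion $U_{n}\subseteq U_{n-1}$ except possibly at the single value $n_{0}=1-\alpha-2\beta$, which is an integer only if $2\beta\notin\mathbb{Z}$, in which case $\beta\neq0$, the coefficient $-\beta$ of $L_{2}(v_{n_{0}-1}\otimes v)$ is nonzero, and $L_{2}$ together with the inclusions already in hand forces $U_{n_{0}-1}=U_{n_{0}}=U_{n_{0}+1}$ (if $F\neq0$ one may simply use $I_{1}$ at $n_{0}$). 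Hence $U_{n}=U_{n+1}$ for all $n$ and the module is irreducible. Conversely, if $\alpha\in\mathbb{Z}$ we may assume $\alpha=0$; then $L_{-1}(v_{0}\otimes v)=0$, while $L_{n}(v_{0}\otimes v)$ and $I_{n}(v_{0}\otimes v)$ lie in $\sum_{k\geq1}U_{k}$ for $n\geq1$, so $v_{0}\otimes v$ generates the submodule $U_{0}=\operatorname*{span}\{x(v_{k}\otimes v):k\geq0,\ x\in U(\mathcal{H}_{-})\}$, which I claim is proper.

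Both the properness of $U_{0}$ and the identification of the quotient come from one leading-term estimate, which I regard as the technical heart and which is modeled on the proofs of Theorems~\ref{ver}, \ref{q1} and \ref{skroz red}. After rewriting with $L_{-1}v=0$ (which identifies $\operatorname*{Ann}_{U(\mathcal{H}_{-})}v$ with $U(\mathcal{H}_{-})L_{-1}$) we may assume in a spanning expression $\sum_{j}c_{j}x_{j}(v_{k_{j}}\otimes v)$ that $x_{j}v\neq0$ in $L(\overline{c},0,0)$ for every $j$, and then the component of $x_{j}(v_{k_{j}}\otimes v)$ of largest $v$-index is $v_{k_{j}}\otimes x_{j}v\neq0$; so a nonzero element of $U_{0}$ has a nonzero component of $v$-index $\geq0$, provided one checks that if the top components $\sum_{k_{j}=K}c_{j}x_{j}v$ accidentally cancel, i.e. $\sum_{k_{j}=K}c_{j}x_{j}=zL_{-1}$, then $\sum_{k_{j}=K}c_{j}x_{j}(v_{K}\otimes v)=zL_{-1}(v_{K}\otimes v)=-Kz(v_{K-1}\otimes v)$, which strictly lowers the index, so the induction on $K$ terminates. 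Consequently $v_{-1}\otimes v$, whose only component has $v$-index $-1$, is not in $U_{0}$, so $U_{0}$ is proper; and the same estimate shows that the image of $v_{-1}\otimes v$ generates the quotient freely over $U(\mathcal{H}_{-})$ and is annihilated by $\mathcal{H}_{+}$ modulo $U_{0}$ (using $v_{k}\otimes v\in U_{0}$ for all $k\geq1$). Since $L_{0}(v_{-1}\otimes v)=(1-\beta)v_{-1}\otimes v$ and $I_{0}(v_{-1}\otimes v)=Fv_{-1}\otimes v$, this gives $(V_{0,\beta,F}^{\prime}\otimes L(\overline{c},0,0))/U_{0}\cong V(\overline{c},1-\beta,F)$.

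The case $\beta=1$ has to be handled separately (and, in the same way, the degenerate cases $\beta\in\{0,1\}$ with $F=0$, where $V_{\alpha,\beta,F}^{\prime}$ is not the full intermediate series so the basis vector $v_{-1}$, resp.\ $v_{0}$, is missing): there the topmost surviving generator of the quotient is $v_{-2}\otimes v$ instead of $v_{-1}\otimes v$, its $L_{0}$-weight equals $2-\beta=1$, and repeating the leading-term and freeness argument with $v_{-2}$ in place of $v_{-1}$ produces $\cong V(\overline{c},1,F)$. I expect this boundary bookkeeping --- in particular pinning down exactly which $v_{m}\otimes v$ survives as the cyclic generator of each quotient, and verifying that the leading-term cancellation provably terminates --- to be the only real obstacle; the $\alpha\notin\mathbb{Z}$ irreducibility statement is essentially automatic from Theorem~\ref{HV m}.
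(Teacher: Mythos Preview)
Your proof follows essentially the same route as the paper: reduce via Theorem~\ref{HV m} to the chain $U_n$, use the relation $L_{-1}v=0$ to get $L_{-1}(v_n\otimes v)=-(n+\alpha)v_{n-1}\otimes v$ for irreducibility when $\alpha\notin\mathbb{Z}$, and for the Verma identification argue via the leading term --- the paper phrases this as ``$U_0$ is spanned by $x(v_k\otimes v)$ with $x\in U(\mathcal{H}_-\setminus\{L_{-1}\})$ and $k\in\mathbb{Z}_{+}$, so every nonzero element of $U_0$ has a component $v_k\otimes xv$ with $k\geq0$,'' which is exactly your estimate. Your handling of the reverse inclusion $U_n\subseteq U_{n-1}$ (via $L_1$, with $L_2$ or $I_1$ at the exceptional index) is more explicit than the paper's terse ``which proves $U_n=U_{n-1}$,'' but the content is identical; for reducibility at $\alpha=0$ the paper simply cites the $p=r=1$ case of Theorem~\ref{malo red}, which amounts to your direct argument.

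One wrinkle in your $\beta=1$ bookkeeping: you assert the top surviving generator of the quotient is $v_{-2}\otimes v$, but when $F\neq0$ the vector $v_{-1}$ is present in $V_{0,1,F}'=V_{0,1,F}$, and your own leading-term argument shows $v_{-1}\otimes v\notin U_0$; its $L_0$-weight is $1-\beta=0$, so the first displayed formula $V(\overline{c},1-\beta,F)$ already covers this case. Only for $F=0$ is $v_{-1}$ genuinely absent from $V_{0,1,0}'$ and $v_{-2}\otimes v$ the top survivor with weight~$1$. The paper's proof makes the identical move (writing ``$U_{-2}/U_0$ \ldots\ if $\beta=1$''), so you match it faithfully --- including this ambiguity in the case split.
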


\begin{proof}
Let $\alpha\notin\mathbb{Z}$. Since $L_{-1}v=0$ in $L(\overline{c},0,0)$ we
have
\[
L_{-1}(v_{n}\otimes v)=-(n+\alpha)v_{n-1}\otimes v
\]
which prooves $U_{n}=U_{n-1}$ for all $n$, so $V_{\alpha,\beta,F}^{\prime
}\otimes L(\overline{c},0,0)$ is irreducible.

Now let $\alpha=0$. We may set $p=r=1$ in the proof of Theorem \ref{malo red}
to show reducibility of $U_{-1}=V_{\alpha,\beta,F}^{\prime}\otimes
L(\overline{c},0,0)$. In particular, $U_{0}$ is an irreducible submodule.
$U_{-1}/U_{0}$ is the highest weight module with the highest weight
$(\overline{c},1-\beta,F)$ if $\beta\neq1$, and $U_{-2}/U_{0}$ is the highest
weight module of the highest weight $(\overline{c},1,F)$ if $\beta=1$. It is
left to prove that quotient module is free over $U(\mathcal{H}_{-})$. Since
$U_{0}$ is spanned with vectors $x(v_{k}\otimes v)$, where $x\in
U(\mathcal{H}_{-}\setminus\{L_{-1}\})$, $k\in\mathbb{Z}_{+}$, it follows that
every vector in $U_{0}$ has a component $v_{k}\otimes xv$ for some $k\geq0$.
Therefore, $y(v_{-1}\otimes v)\notin U_{0}$ for any $y\in U(\mathcal{H}_{-})$,
so $U_{-1}/U_{0}$ is free, i.e., a Verma module.
\end{proof}

\bigskip

\bigskip

\begin{acknowledgement}
I would like to thank my advisor Professor Dra\v{z}en Adamo\-vi\'{c} for his
ideas, guidance and patience. Some of these results were presented at the
conference Representation Theory XII in Dubrovnik in 2011.\ and are a part of
author's Ph.D.\ dissertation \cite{Radobolja1} written under the direction of
Professor D.\ Adamovi\'{c}.
\end{acknowledgement}

\end{document}